\documentclass[draft]{amsart}
\usepackage{amssymb}
\usepackage[abbrev]{amsrefs}
\numberwithin{equation}{section}
\def\N{\mathbb N}
\def\R{\mathbb R}

\providecommand{\norm}[1]{\left\lVert#1\right\rVert}

\providecommand{\abs}[1]{\left\lvert#1\right\rvert}

\DeclareMathOperator*{\Argmin}{argmin}
\DeclareMathOperator*{\Argmax}{argmax}
\DeclareMathOperator{\Fix}{\mathcal{F}}
\DeclareMathOperator{\AC}{\mathcal{A}}

\DeclareMathOperator{\Saddle}{\mathcal{SP}}
\DeclareMathOperator{\Co}{\mathrm{co}}
\newcommand{\CAT}{\textup{CAT}}

\theoremstyle{plain}
\newtheorem{theorem}{Theorem}[section]
\newtheorem{lemma}[theorem]{Lemma}

\newtheorem{corollary}[theorem]{Corollary}

\theoremstyle{definition}
\newtheorem{definition}[theorem]{Definition}

\theoremstyle{remark}
\newtheorem{remark}[theorem]{Remark}
\allowdisplaybreaks
\title[] 
{Sion's minimax theorem and the proximal point algorithm in Hadamard spaces}
\author[F.~Kohsaka]{Fumiaki~Kohsaka}
\address[F.~Kohsaka]
{Department of Mathematical Sciences, Tokai University, 
Kitakaname, Hiratsuka, Kanagawa 259-1292, Japan}
\email{f-kohsaka@tokai.ac.jp}
\subjclass[2010]{}
\keywords{}
\date{\today; File: \jobname}
\begin{document}
\dedicatory{Dedicated to the memory of my parents,\\
Yuji Kohsaka (1940--2024) and Yukiko Kohsaka (1947--2023)}

\begin{abstract}
 We obtain Sion's minimax theorem in Hadamard spaces and 
 discuss its applications. Among other things, we study 
 several fundamental properties of resolvents of 
 saddle functions in Hadamard spaces. An application to 
 the proximal point algorithm for minimax problems 
 in Hadamard spaces are also included. 
\end{abstract}
\maketitle

\section{Introduction}

The aim of this paper is to study minimax problems in Hadamard spaces. 
We first obtain Sion's minimax theorem in Hadamard spaces. 
We next study several fundamental properties of resolvents of 
saddle functions in Hadamard spaces. We finally obtain 
existence and convergence theorems on the proximal point algorithm 
for minimax problems in Hadamard spaces.  

Minimax problems are one of fundamental problems in 
nonlinear analysis, nonlinear optimization, game theory, and so on. 
This is a problem of finding a point 
$(x_0,y_0)$ in a product set $X\times Y$ such that 
\begin{align}
 f(x,y_0)\leq f(x_0,y_0)\leq f(x_0,y)
\end{align}
for all $(x,y)\in X\times Y$, where 
$f\colon X\times Y\to \R$ is a given function. 
The set of all saddle points of $f$ is denoted by 
$\Saddle (f)$. 
Defining a real function $F$ by 
\begin{align}
F\bigl((x,y), (x',y')\bigr) = f(x,y') -f(x',y) 
\end{align}
for all $(x,y)$ and $(x',y')$ in $X\times Y$, 
we know that $(x_0,y_0)$ is an element of $\Saddle(f)$ 
if and only if it is an equilibrium point of $F$ in the 
sense of Blum and Oettli~\cite{MR1292380}, i.e., 
\begin{align}
 F\bigl((x_0,y_0), (x,y)\bigr) \geq 0
\end{align}
for all $(x,y)\in X\times Y$. 

In 1958, Sion~\cite{MR0097026} proved the following minimax theorem 
in Hausdorff topological vector spaces; 
see also~\cite[Theorem~16]{MR0399979} and~\cite[Theorem~6.3.3]{MR1864294}: 

 \begin{theorem}[Sion~{\cite[Theorem~3.4]{MR0097026}}]\label{thm:Sion-minimax}
 Let $E$ and $F$ be real Hausdorff topological vector spaces, 
 $X$ a nonempty convex subset of $E$, 
 $Y$ a nonempty compact convex subset of $F$, 
 and $f\colon X\times Y\to \R$ a function satisfying 
 \begin{enumerate}
  \item[(i)] $f(\,\cdot\,,y)$ is upper semicontinuous 
 and quasi-concave for each $y\in Y$; 
  \item[(ii)] $f(x,\cdot \,)$ is lower semicontinuous 
 and quasi-convex for each $x\in X$. 
 \end{enumerate}
 Then the equality 
 \begin{align}
  \min_{y\in Y} \sup_{x\in X} f(x,y) = \sup_{x\in X}\min_{y\in Y} f(x,y)
 \end{align}
 holds. 
\end{theorem}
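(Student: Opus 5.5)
The inequality $\sup_x\min_y f\le \min_y\sup_x f$ is immediate. The minimum over $y$ is attained: $y\mapsto f(x,y)$ is lower semicontinuous, so $y\mapsto\sup_x f(x,y)$ is a supremum of lower semicontinuous functions, hence lower semicontinuous, and $Y$ is compact. For the reverse inequality I will follow Komiya's elementary proof, which uses only segments and a connectedness argument. I argue by contradiction: suppose there is a real $\alpha$ with
\begin{align*}
 \sup_{x\in X}\min_{y\in Y} f(x,y)<\alpha<\min_{y\in Y}\sup_{x\in X} f(x,y).
\end{align*}
For each $x$ the set $\{y\in Y: f(x,y)\le \alpha\}$ is closed in the compact space $Y$. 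These sets have empty intersection, since any $y$ in all of them would give $\sup_x f(x,y)\le\alpha$. By compactness there are finitely many $x_1,\dots,x_n\in X$ with $\min_{y}\max_{i} f(x_i,y)>\alpha$.

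The key step is a two-point lemma. If $x_1,x_2\in X$ and $\alpha<\min_y\max\{f(x_1,y),f(x_2,y)\}$, then there is $x_0\in[x_1,x_2]$ with $\alpha<\min_y f(x_0,y)$. Dually, if $y_1,y_2\in Y$ and $\max_x\min\{f(x,y_1),f(x,y_2)\}<\alpha$, then there is $y_0\in[y_1,y_2]$ with $\max_x f(x,y_0)<\alpha$; here $X$ is replaced by a segment, which is compact.

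I prove the first lemma by contradiction. Choose $\beta$ with $\alpha<\beta<\min_y\max_i f(x_i,y)$. For $z\in[x_1,x_2]$ define
\begin{align*}
 A_z=\{y\in[y_1,y_2]: f(z,y)\le\alpha\},\qquad B_z=\{y\in[y_1,y_2]: f(z,y)\le\beta\}.
\end{align*}
By quasi-convexity in $y$ these sets are convex, so they are intervals of the segment. Here $y_1,y_2$ are chosen using the failure of the conclusion at $x_1$ and $x_2$, and the segment is treated as compact. The sets $A_{x_1}$ and $A_{x_2}$ are disjoint nonempty closed sets. Quasi-concavity in $x$ gives $f(z,y)\ge\min\{f(x_1,y),f(x_2,y)\}$, and from this $A_z\subset A_{x_1}\cup A_{x_2}$. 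Since $A_z$ is an interval, it lies entirely in one of the two. Define $I=\{z: A_z\subset A_{x_1}\}$ and $J=\{z:A_z\subset A_{x_2}\}$. Upper semicontinuity in $x$ together with the buffer $\beta$ shows that $I$ and $J$ are both closed in $[x_1,x_2]$. Precisely, if $z_k\to z$ with $z_k\in I$, pick $y\in A_z$. Then $f(z,y)\le\alpha<\beta$, so $y\in B_{z_k}$ eventually, and one compares $B$ with $A$ to conclude. Both sets are nonempty and disjoint, which contradicts the connectedness of the segment $[x_1,x_2]\cong[0,1]$.

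Induction on $n$ then reduces the $n$ points $x_1,\dots,x_n$ to a single $x_0$, which contradicts the choice of $\alpha$. The induction restricts $Y$ to the compact convex set $\{y: f(x_n,y)\le\alpha'\}$ and applies the two-point lemma.

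The main obstacle is the closedness of $I$ and $J$. This is where the semicontinuity hypotheses and the intermediate level $\beta$ are essential. It requires carefully nesting the sublevel sets so that the topology of $Y$ and that of the segment in $X$ interact correctly, and it must be done in general Hausdorff topological vector spaces, using only continuity of $t\mapsto(1-t)x_1+tx_2$.
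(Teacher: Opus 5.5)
Your outline follows Komiya's argument, which is also what the paper runs in the Hadamard setting (Lemmas~\ref{lem:Sion-CAT0-1} and~\ref{lem:Sion-CAT0-2} and the proof of Theorem~\ref{thm:Sion-CAT0}); the paper does not prove this TVS statement itself. However, your proof of the two-point lemma has a genuine gap: you take the sublevel sets inside a segment $[y_1,y_2]\subset Y$ instead of inside $Y$.

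The argument needs $A_z\neq\emptyset$ for \emph{every} $z\in[x_1,x_2]$. If some $A_z$ is empty, it is contained in both $A_{x_1}$ and $A_{x_2}$, so $I\cap J\neq\emptyset$. Likewise, in the closedness step, $A_{z_k}\subset A_{x_1}$ alone then says nothing about which side $B_{z_k}$ lies on, and you need that to place $y$. The contradiction hypothesis at $z$ only gives some $y\in Y$ with $f(z,y)\le\alpha$, and nothing puts it on the segment you fixed at the endpoints.

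What your argument actually proves is Komiya's lemma with $Y$ replaced by $[y_1,y_2]$: some $z$ has $\min_{y\in[y_1,y_2]}f(z,y)>\alpha$. That is strictly weaker than the bound $\min_{y\in Y}f(z,y)>\alpha$ your induction needs. For example, take $X=[0,1]$, $Y=\{(s,t): s,t\ge 0,\ s+t\le 1\}$, $f(x,(s,t))=(1-x)(-1+4s+2t)+x(2-4s-3t)$, $\alpha=0$, $x_1=0$, $x_2=1$. Then $\min_Y\max\{f(0,\cdot),f(1,\cdot)\}>0$, and the unique minimizers of $f(0,\cdot)$ and $f(1,\cdot)$ are $y_1=(0,0)$ and $y_2=(1,0)$. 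But $f(1/2,(s,t))=(1-t)/2$ is positive on $[y_1,y_2]$ and vanishes at $(0,1)$. So $A_{1/2}=\emptyset$ although $\min_Y f(1/2,\cdot)\le\alpha$, and $z=1/2$ satisfies the segment version of the lemma but not the version over $Y$.

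The remedy is what Komiya and the paper do. Put $A_z=\{y\in Y: f(z,y)\le\alpha\}$ and $B_z=\{y\in Y: f(z,y)\le\beta\}$. These are nonempty (by the contradiction hypothesis), closed, convex subsets of $Y$. Quasi-concavity gives $B_z\subset B_{x_1}\cup B_{x_2}$, and $B_{x_1}\cap B_{x_2}=\emptyset$ by the choice of $\beta$. Then $B_z$ lies in one of the two sets because a convex subset of a topological vector space is connected. The paper, lacking linear structure, proves this step by hand via a supremum along the geodesic from $p$ to $q$.

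With $I=\{z: A_z\subset B_{x_1}\}$ and $J=\{z: A_z\subset B_{x_2}\}$, as in the paper, your closedness sketch goes through directly, because the nonempty set $A_{z_k}\subset B_{z_k}\cap B_{x_1}$ forces $B_{z_k}\subset B_{x_1}$. The rest of your outline (finite subfamily by compactness, induction with $Y$ replaced by $\{y: f(x_n,y)\le\alpha\}$) matches the paper's. The ``dual'' two-point lemma is not needed.
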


If the set $X$ is also compact in Theorem~\ref{thm:Sion-minimax}, 
the following minimax equality 
 \begin{align}
  \min_{y\in Y} \max_{x\in X} f(x,y) = \max_{x\in X}\min_{y\in Y} f(x,y)
 \end{align}
holds. This implies that $\Saddle (f)$ is nonempty. 

In 1988, Komiya~\cite{MR0930413} found an elementary proof of Sion's minimax theorem 
in topological vector spaces without using either Brouwer's fixed point theorem or 
KKM theorem~\cites{KKM-FundMath29, MR2184742}. 

The proximal point algorithm first introduced by 
Martinet~\cite{MR0298899} is a well known method for 
approximating a solution to the equation 
\begin{align}\label{eq:zero}
 0\in Au 
\end{align} 
for a maximal monotone operator $A\colon H\to 2^H$ in a real 
Hilbert space $H$. The set of all points $u\in H$ 
satisfying~\eqref{eq:zero} is denoted by $A^{-1}(0)$. 
This algorithm generates a sequence $\{x_n\}$ 
by $x_1\in H$ and 
\begin{align}
 0 \in Ax_{n+1} + \frac{1}{\lambda_n} (x_{n+1}-x_n)
\end{align}
for all $n\in \N$, where $\{\lambda_n\}$ is a sequence of positive real numbers. 
In 1976, Rockafellar~\cite{MR0410483} proved 
under the condition $\inf_n\lambda_n >0$ that 
$\{x_n\}$ is bounded if and only if $A^{-1}(0)$ is nonempty; 
if $A^{-1}(0)$ is nonempty, then $\{x_n\}$ is 
weakly convergent to an element of $A^{-1}(0)$. 

In 1968, using the concept of subdifferentials of 
convex functions, Rockafellar~\cite{MR0285942} constructed 
maximal monotone operators associated with 
saddle functions in Banach spaces. 
Further, Rockafellar~\cite{MR0410483} applied the proximal point algorithm to 
the problem of finding saddle points in Hilbert spaces 
as follows; see also~\cite{MR2548424}: 
\begin{theorem}[Rockafellar~\cite{MR0410483}]
\label{thm:Rockafellar-ppa-saddle}
 Let $X$ and $Y$ be nonempty closed convex subsets of 
 real Hilbert spaces $H_1$ and $H_2$, respectively, 
 $f\colon X\times Y\to \R$ a function satisfying 
 \begin{enumerate}
  \item[(i)] $f(\,\cdot\,,y)$ is upper semicontinuous 
 and concave for each $y\in Y$; 
  \item[(ii)] $f(x,\cdot \,)$ is lower semicontinuous 
 and convex for each $x\in X$, 
 \end{enumerate} 
 and $\{(x_n,y_n)\}$ a sequence in $X\times Y$ defined by 
 $(x_1,y_1)\in X\times Y$ and $(x_{n+1},y_{n+1})$ 
 is the unique saddle point of the function 
 \begin{align}
  (z,w)\mapsto f(z,w) -\frac{1}{2\lambda_n}\norm{z-x_n}_{H_1}^2
+\frac{1}{2\lambda_n}\norm{w-y_n}_{H_2}^2
 \end{align}
on $X\times Y$ for all $n\in \N$, 
where $\{\lambda_n\}$ a sequence of positive real numbers such that 
 $\inf_{n}\lambda_n >0$. Then 
$\{(x_n,y_n)\}$ is bounded if and only if $\Saddle (f)$ 
is nonempty. Further, if $\Saddle (f)$ is nonempty, then 
$\{(x_n,y_n)\}$ is weakly convergent to an element of 
$\Saddle (f)$. 
\end{theorem}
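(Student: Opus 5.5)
The plan is to recast the saddle-point iteration as Rockafellar's proximal point algorithm for a maximal monotone operator on $H=H_1\times H_2$, and then apply the convergence theorem for that algorithm quoted above. Extend $f$ to all of $H_1\times H_2$ in the standard way: set $f(x,y)=-\infty$ if $x\notin X$, and $f(x,y)=+\infty$ if $x\in X$, $y\notin Y$. Then define
\begin{align}
 T(x,y)=\partial_x\bigl(-f(\cdot,y)\bigr)(x)\times \partial_y f(x,\cdot)(y).
\end{align}
By (i) and (ii), $-f(\cdot,y)$ and $f(x,\cdot)$ are proper, lower semicontinuous and convex. Hence $T$ is monotone: adding the subgradient inequalities for $(x,y)$ and $(x',y')$ and cancelling the cross terms $f(x,y')$ and $f(x',y)$ gives $\langle u-u',x-x'\rangle+\langle v-v',y-y'\rangle\geq 0$.

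Next, I identify the zeros and resolvents of $T$. A pair $(x_0,y_0)$ satisfies $0\in T(x_0,y_0)$ exactly when $x_0$ maximizes $f(\cdot,y_0)$ and $y_0$ minimizes $f(x_0,\cdot)$, so $T^{-1}(0)=\Saddle(f)$. The same argument applied to the perturbed function
\begin{align}
 g_n(z,w)=f(z,w)-\frac{1}{2\lambda_n}\norm{z-x_n}^2+\frac{1}{2\lambda_n}\norm{w-y_n}^2
\end{align}
shows that $(x_{n+1},y_{n+1})$ is a saddle point of $g_n$ if and only if $0\in T(x_{n+1},y_{n+1})+\lambda_n^{-1}\bigl((x_{n+1},y_{n+1})-(x_n,y_n)\bigr)$. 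In other words, $(x_{n+1},y_{n+1})=(I+\lambda_nT)^{-1}(x_n,y_n)$. Uniqueness of the saddle point follows from monotonicity: two saddle points of $g_n$ would satisfy an inequality forcing their distance to be zero, because $g_n$ is strongly concave-convex.

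The main obstacle is showing that $T$ is maximal, or equivalently (by Minty's theorem) that $g_n$ has a saddle point for every base point. Since $X$ and $Y$ need not be bounded, Sion's theorem (Theorem~\ref{thm:Sion-minimax}) does not apply directly. I would first restrict $g_n$ to $X\cap B_r\times Y\cap B_r$, which is weakly compact and convex. On that product, $g_n$ is weakly upper semicontinuous and concave in $z$ and weakly lower semicontinuous and convex in $w$, because convex lower semicontinuous functions are weakly lower semicontinuous. Sion's theorem applied in the weak topologies then gives a saddle point $(z_r,w_r)$. Next, I would show that $(z_r,w_r)$ stays bounded as $r\to\infty$. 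For this, fix $(a,b)\in X\times Y$ and compare $g_n(z_r,b)$ with $g_n(a,w_r)$ using the saddle inequalities. The affine minorants $f(z,b)\leq c+\langle p,z\rangle$ and $f(a,w)\geq c'+\langle q,w\rangle$ are dominated by the quadratic terms, which forces $\norm{z_r}+\norm{w_r}$ to be bounded. For $r$ large, the points $(z_r,w_r)$ are then interior to the balls. Local saddle-ness together with concavity-convexity then gives a global saddle point on $X\times Y$. This is Rockafellar's maximality result from~\cite{MR0285942}, and I would cite it if allowed.

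Finally, I apply Rockafellar's theorem on the proximal point algorithm with $\inf_n\lambda_n>0$. It gives that $\{(x_n,y_n)\}$ is bounded if and only if $T^{-1}(0)=\Saddle(f)\neq\emptyset$, and that in that case the sequence converges weakly to an element of $T^{-1}(0)=\Saddle(f)$.
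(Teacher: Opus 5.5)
Your argument is correct. It follows Rockafellar's original route: you realize $\Saddle(f)$ as $T^{-1}(0)$ for the saddle operator $T$, identify the iteration with $(I+\lambda_nT)^{-1}$, and get maximality from the easy direction of Minty's theorem. Your truncation to $X\cap B_r\times Y\cap B_r$, Sion's theorem in the weak topologies, and the growth estimate from affine bounds are all sound. They also work for every base point in $H_1\times H_2$, not only in $X\times Y$, which is what Minty's criterion needs. You then quote the proximal point theorem for maximal monotone operators.

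The paper never proves this statement directly. It recovers it as the Hilbert-space case of Theorem~\ref{thm:PPA-Saddle}: closed convex subsets of Hilbert spaces are Hadamard spaces, $\Delta$-convergence there is weak convergence, and $\inf_n\lambda_n>0$ gives $\sum_n\lambda_n=\sum_n\lambda_n^2=\infty$. A Hadamard space has no subdifferentials or monotone operators, so the paper works with the resolvent $R_\lambda$ itself; in the Hilbert case this is exactly your $(I+\lambda T)^{-1}$ on $X\times Y$.

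Existence comes from a Sion theorem proved by Komiya's method, with $\Delta$-compactness in place of weak compactness (Theorem~\ref{thm:Sion-CAT0}), and a coercive extension (Theorem~\ref{thm:minimax-coercive}). There the paper does not show, as you do, that truncated saddle points are eventually interior. Instead it fixes one radius $\delta$ and handles a boundary saddle point via $F((x_0,y_0),(a,b))=0$ and convexity along segments issuing from $(a,b)$. The coercivity comes from Theorem~\ref{thm:Bacak-umbrella}, the metric substitute for your affine bounds.

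Rockafellar's convergence theorem is then replaced by three ingredients:
\begin{itemize}
\item metric inequalities for $R_\lambda$ (Theorem~\ref{thm:resolvent-saddle});
\item the averaged asymptotic function of Theorem~\ref{thm:Kimura-K-bdd}, for the implication that boundedness forces $\Saddle(f)\neq\emptyset$;
\item Fej\'er monotonicity, $\Delta$-upper semicontinuity and Lemma~\ref{lem:Kimura-K-omega}, for convergence.
\end{itemize}

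Your proof is shorter, but it depends on the linear structure and uses Rockafellar's theorem as a black box. The paper's proof is self-contained and works in Hadamard spaces. It also needs only $\sum_n\lambda_n=\infty$ for the boundedness equivalence, plus $\sum_n\lambda_n^2=\infty$ for convergence.
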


Later, applying convergence theorems 
obtained in~\cites{MR2058504, MR2112848}, 
Kohsaka and Takahashi~\cite{MR2144044} obtained 
weak and strong convergence theorems for 
minimax problems in Banach spaces; 
see also~\cites{MR3013139, MR2548424}.  
Aoyama, Kimura, and Takahashi~\cite{MR2422998} 
studied the relation between 
the zero point problem for maximal monotone operators and 
the equilibrium problem for two variable functions in Banach spaces. 

In recent years, the problems in convex analysis and fixed point theory 
have been studied in the more general nonlinear settings 
such as geodesic metric spaces including complete $\CAT(\kappa)$ spaces, where 
$\kappa$ is a real number. This space can be seen as 
both a nonlinear generalization of Hilbert spaces 
and a nonsmooth generalization of Riemannian manifolds 
with bounded sectional curvature. 
In particular, using results on resolvents of convex functions 
obtained in~\cites{MR1360608, MR1651416}, 
Ba{\v{c}}{\'a}k~\cite{MR3047087} obtained a $\Delta$-convergence 
theorem for this algorithm for proper lower semicontinuous convex 
functions in Hadamard spaces, i.e., complete $\CAT(0)$ spaces. 
More recently, Kimura and Kohsaka~\cite{MR3574140} studied two 
modifications of the algorithm for such functions in Hadamard spaces. 

Assuming the convex hull finite property 
on Hadamard spaces, Kimura and Kishi~\cite{MR3897196} 
studied equilibrium problems in Hadamard spaces. 
Applying KKM theorem in Hadamard spaces obtained by 
Niculescu and Roven\c ta~\cite{MR2561730}, 
Kimura and Kishi~\cite{MR3897196} discussed fundamental properties of 
resolvents of bifunctions in Hadamard spaces. 
Note that a $\CAT(0)$ space $E$ has the convex hull finite 
property if each continuous mapping 
of $\overline{\Co}\, F$ into itself 
has a fixed point whenever $F$  
is a nonempty finite subset of $E$. 
See also Kumam and Chaipunya~\cite{MR4049738} 
on the study of the equilibrium problem in Hadamard spaces without 
the convex hull finite property. 
More recently, Kimura~\cite{MR4286979} 
also obtained results for solving the equilibrium problem 
in complete $\CAT(1)$ spaces with the convex hull finite property. 

This paper is organized as follows: 
In Section~\ref{sec:preliminaries}, we give 
necessary definitions and recall some results needed in this paper. 
In Section~\ref{sec:Sion}, we obtain an analogous result of 
Theorem~\ref{thm:Sion-minimax} in Hadamard spaces 
without assuming either the convex hull finite property or 
the metric completeness. 
In Section~\ref{sec:coercive}, we obtain saddle point theorem 
for coercive saddle functions in Hadamard spaces. 
In Section~\ref{sec:resolvent}, we give the definition of 
resolvents of saddle functions and discuss their fundamental properties. 
In Section~\ref{sec:ppa}, we apply our results in this paper 
to the proximal point algorithm for saddle functions in Hadamard spaces 
and obtain a generalization of Theorem~\ref{thm:Rockafellar-ppa-saddle} 
in Hadamard spaces. 

\section{Preliminaries}\label{sec:preliminaries}

Let $\N$ and $\R$ be the sets of all positive integers and 
all real numbers, respectively. We denote by $\R^2$ 
the two dimensional Euclidean space with 
norm $\norm{\,\cdot \,}_{\R^2}$. 
We denote by $(-\infty, \infty]$ and $[-\infty, \infty)$ 
the sets $\R \cup \{\infty\}$ and $\R \cup \{-\infty\}$, respectively. 
For a nonempty set $X$ and a mapping $T\colon X\to X$, 
we denote by $\Fix(T)$ the set of all fixed points of $T$, i.e., 
$\Fix(T)$ is the set of all $u\in X$ such that $Tu=u$.  
We denote by $\Argmin_X f$ or $\Argmin_{y\in X}f(y)$ 
the set of all minimizers of a function $f\colon X\to (-\infty, \infty]$, i.e., 
\begin{align}
 \Argmin_X f = \Argmin_{y\in X}f(y) = \bigl\{u\in X: f(u)=\inf f(X)\bigr\}. 
\end{align}
We also denote by $\Argmax_X g$ or $\Argmax_{y\in X}g(y)$ 
the set of all maximizers of a function $g\colon X\to [-\infty, \infty)$, i.e., 
\begin{align}
 \Argmax_X g = \Argmax_{y\in X}g(y) = \bigl\{u\in X: g(u)=\sup g(X)\bigr\}. 
\end{align}
Let $(X,d)$ be a metric space. A mapping $T\colon X\to X$ is said to be:
\begin{itemize}
 \item nonexpansive if $d(Tx,Ty)\leq d(x,y)$ 
for all $x,y\in X$; 
 \item quasi-nonexpansive if 
$\Fix(T)$ is nonempty and $d(Tx,y)\leq d(x,y)$ 
for all $x\in X$ and $y\in \Fix(T)$; 
 \item firmly metrically nonspreading~\cite{MR4021035} if 
\begin{align}
 2d(Tx,Ty)^2 +d(Tx,x)^2 + d(Ty,y)^2 
 \leq d(Tx,y)^2 + d(Ty,x)^2
\end{align}
 for all $x,y\in X$; 
 \item metrically nonspreading~\cite{MR4021035} if 
\begin{align}
 2d(Tx,Ty)^2 \leq d(Tx,y)^2 + d(Ty,x)^2
\end{align}
 for all $x,y\in X$. 
\end{itemize}
It is obvious that if $T$ is firmly metrically nonspreading, then 
it is metrically nonspreading. 

Let $(X,d)$ be a metric space. The space $X$ is said to be 
geodesic if for each $x,y\in X$, there exists 
a mapping $\gamma\colon [0,1]\to X$ such that 
$\gamma(0)=x$, $\gamma(1)=y$, and 
\begin{align}
 d\bigl(\gamma(s), \gamma(t)\bigr) = d(x,y)\abs{s-t}
\end{align}
for all $s,t\in [0,1]$. 
The mapping $\gamma$ is called a normalized geodesic from $x$ to $y$. 
Its image is denoted by $[x,y]_{\gamma}$ 
and is called a geodesic segment between $x$ and $y$.  
The convex combination between $x$ and $y$ is defined by 
\begin{align}
 (1-\alpha) x\oplus_{\gamma} \alpha y = \gamma (\alpha)
\end{align}
for all $\alpha \in [0,1]$. 
The space $X$ is also said to be uniquely geodesic if 
for each $x,y\in X$, there exists a unique 
normalized geodesic $\gamma$ from $x$ to $y$. 
In this case, the geodesic segment and 
the convex combination between $x$ and $y$ are simply denoted by $[x,y]$ 
and $(1-\alpha) x\oplus \alpha y$ for all $\alpha \in [0,1]$, 
respectively. 

A geodesic metric space $(X,d)$ is said to be a $\CAT(0)$ space 
if 
\begin{align}
 \begin{split}
 &d\bigl(
(1-\alpha)x\oplus_{\gamma_1} \alpha y, 
(1-\beta)x \oplus_{\gamma_2} \beta z
 \bigr) \\
 &\quad \leq 
 \norm{
 (1-\alpha)\overline{x}+ \alpha \overline{y}
 - 
 \bigl((1-\beta)\overline{x} + \beta \overline{z}\bigr)
 }_{\R^2}  
 \end{split}
\end{align}
whenever $\alpha, \beta\in [0,1]$, 
$x,y,z\in X$, 
$\gamma_1$ and $\gamma_2$ are normalized geodesics 
from $x$ to $y$ and from $x$ to $z$, respectively, 
$\overline{x}, \overline{y}, \overline{z}\in \R^2$, 
and 
\begin{align}\label{eq:comparison-triangle}
 d(x,y)=\norm{\overline{x}-\overline{y}}_{\R^2}, \quad 
 d(y,z)=\norm{\overline{y}-\overline{z}}_{\R^2}, \quad 
 d(z,x)=\norm{\overline{z}-\overline{x}}_{\R^2}.  
\end{align}
Note that if $x,y,z\in X$, then the triangle inequality on $X$ 
implies that there exist 
$\overline{x}, \overline{y}, \overline{z}\in \R^2$ 
such that~\eqref{eq:comparison-triangle} holds. 

In fact, we may assume that $d(x,y)\geq d(y, z)$ and 
$d(x,y)\geq d(z,x)$ and set 
\begin{align}
 a=d(y,z), \quad b=d(z,x), \quad c=d(x,y). 
\end{align}
If $c=0$, then $a=b=c=0$ and hence it suffices to set 
$\overline{x}=\overline{y}=\overline{z}=(0,0)$. 
If $c>0$, then it suffices to 
set 
\begin{align}
 \begin{split}
 \overline{x}&=(0,0), \quad 
 \overline{y}=\bigl(c,0\bigr), \\
 \overline{z}&= 
 \left(\frac{b^2+c^2-a^2}{2c}, 
 \frac{\sqrt{(a+b+c)(a+b-c)(b+c-a)(c+a-b)}}{2c}
 \right),   
 \end{split}
\end{align}
where $\overline{z}=(s,t)$ is a solution to the simultaneous equation 
\begin{align}
 s^2+t^2=b^2, \quad (s-c)^2 + t^2 = a^2. 
\end{align}
It is clear that if $(X,d)$ is a $\CAT(0)$ space, then 
it is uniquely geodesic. 

Every complete $\CAT(0)$ space is called an Hadamard space. 
It is known that every nonempty closed convex subset of a real Hilbert space, 
every complete $\R$-tree, 
and every complete simply connected Riemannian manifold 
with nonpositive sectional curvature are 
Hadamard spaces. 
See~\cites{MR3241330, MR1744486, MR1835418} for more details 
on geodesic metric spaces and $\CAT(0)$ spaces. 

If $(X,d)$ is a $\CAT(0)$ space, then 
the following fundamental inequalities 
\begin{align}\label{eq:convexity-d}
 d\bigl((1-\alpha)x\oplus \alpha y, z\bigr) 
 \leq (1-\alpha)d(x,z) + \alpha d(y,z)
\end{align}
and 
\begin{align}\label{eq:CN-inequality}
 d\bigl((1-\alpha)x\oplus \alpha y, z\bigr)^2
 \leq (1-\alpha)d(x,z)^2 + \alpha d(y,z)^2 -\alpha(1-\alpha)d(x,y)^2
\end{align}
hold for all $x,y,z\in X$ and $\alpha\in [0,1]$. 

We know the following results: 

\begin{theorem}[{\cite[Theorem~1.3.3]{MR3241330}}]\label{thm:CS-inequality}
Let $(X,d)$ be a geodesic metric space. 
Then $X$ is a $\CAT(0)$ space if and only if the Cauchy--Schwarz inequality
\begin{align}
  \frac{1}{2}
 \left\{d(x_1,x_4)^2+d(x_2,x_3)^2-d(x_1,x_3)^2-d(x_2,x_4)^2\right\}
 \leq d(x_1,x_2)d(x_3,x_4)
\end{align}
holds for all $x_1,x_2,x_3,x_4\in X$. 
\end{theorem}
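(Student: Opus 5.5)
Both implications of the equivalence will be proved directly from the definition of a $\CAT(0)$ space given above.

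\emph{$\CAT(0)$ implies Cauchy--Schwarz.} First I treat a degenerate configuration with a shared point: $x_2=x_3$. Then the left side is $\frac12\{d(x_1,x_4)^2-d(x_1,x_2)^2-d(x_2,x_4)^2\}$, and the right side is $d(x_1,x_2)d(x_2,x_4)$. This is exactly $d(x_1,x_4)\le d(x_1,x_2)+d(x_2,x_4)$ after squaring, so it is just the triangle inequality.

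For the general case I compare two triangles, which I expect to be the main obstacle. Let $m$ be the midpoint of $[x_2,x_3]$ and apply \eqref{eq:CN-inequality} with $\alpha=1/2$ to the distances $d(m,x_1)^2$ and $d(m,x_4)^2$. An alternative route is a subembedding argument, which passes to quadrilateral comparison as in Berg--Nikolaev. Concretely, I will show that any four points admit $\overline{x}_1,\dots,\overline{x}_4\in\R^2$ satisfying
\begin{align}
d(x_1,x_3)\le\norm{\overline{x}_1-\overline{x}_3}_{\R^2}, \qquad d(x_2,x_4)\le\norm{\overline{x}_2-\overline{x}_4}_{\R^2},
\end{align}
while the four "side" distances $d(x_1,x_2)$, $d(x_2,x_3)$, $d(x_3,x_4)$, $d(x_4,x_1)$ are preserved. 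This is Reshetnyak-type majorization of a quadrilateral, and it follows from the comparison inequality by gluing two comparison triangles along the diagonal $[x_1,x_3]$ and using Alexandrov's lemma. With this in hand, the Euclidean identity
\begin{align}
\tfrac12\bigl(\norm{a-d}^2+\norm{b-c}^2-\norm{a-c}^2-\norm{b-d}^2\bigr)=\langle a-b,\,c-d\rangle\le\norm{a-b}\,\norm{c-d}
\end{align}
gives the inequality. The left side only increases when the subtracted terms decrease, so I must arrange the majorization to shrink exactly the subtracted pair $(x_1,x_3),(x_2,x_4)$. Relabeling the quadrilateral as $x_1x_2x_4x_3$ makes $d(x_1,x_3)$ and $d(x_2,x_4)$ sides and $d(x_1,x_4)$, $d(x_2,x_3)$ diagonals. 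The Reshetnyak-type statement for the $\CAT(0)$ condition then gives the needed direction: the diagonals of the comparison quadrilateral are at least as long as in $X$.

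\emph{Cauchy--Schwarz implies $\CAT(0)$.} I will recover the midpoint form of \eqref{eq:CN-inequality}, which for geodesic spaces is equivalent to the comparison inequality by the standard Bruhat--Tits argument. Let $m$ be a midpoint of $x,y$ and $z\in X$. Apply the inequality with $(x_1,x_2,x_3,x_4)=(x,m,m,z)$, which gives
\begin{align}
d(x,z)^2-d(x,m)^2-d(m,z)^2\le 2d(x,m)d(m,z).
\end{align}
I will also apply it with $(x_1,x_2,x_3,x_4)=(z,m,y,m)$ and combine the two instances using $d(x,m)=d(m,y)=\frac12 d(x,y)$. This should yield $d(m,z)^2\le\frac12 d(x,z)^2+\frac12 d(y,z)^2-\frac14 d(x,y)^2$. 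The delicate part is choosing quadruples so that the cross terms cancel, for example by taking $(x,y,m,z)$ so that the right side becomes $d(x,y)d(m,z)$ and comparing with $d(x,m)+d(m,y)=d(x,y)$.
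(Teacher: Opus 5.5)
The paper gives no proof of this statement; it quotes it from Ba\v{c}\'ak's book, where it is due to Berg and Nikolaev. Your forward implication is the standard argument and is sound. Apply the four-point subembedding (two comparison triangles glued along a diagonal, with Alexandrov's lemma in the non-convex case) to the cyclic order $x_1x_2x_4x_3$. This preserves $d(x_1,x_2)$, $d(x_2,x_4)$, $d(x_4,x_3)$, $d(x_3,x_1)$ and can only enlarge the added terms $d(x_1,x_4)$, $d(x_2,x_3)$. The Euclidean identity together with Cauchy--Schwarz in $\R^2$ then gives the inequality. The case $x_2=x_3$ and the aborted midpoint idea are unnecessary.

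The converse, however, is not proved, and choosing better quadruples from the same four points cannot repair it.

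\emph{The proposed instances carry no information.} The instances $(x,m,m,z)$ and $(z,m,y,m)$ contain a repeated point. Any three points have a Euclidean comparison triangle, so these instances are just the triangle inequalities $d(x,z)\le d(x,m)+d(m,z)$ and $\abs{d(z,m)-d(m,y)}\le d(z,y)$. The instance $(x,y,m,z)$ gives $d(x,z)^2-d(y,z)^2\le 2d(x,y)d(m,z)$. That is a lower bound on $d(m,z)$, whereas the midpoint inequality you need is an upper bound.

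\emph{No combination restricted to $\{x,y,m,z\}$ can work.} Together with $d(x,m)=d(m,y)=\frac12 d(x,y)$, no set of instances drawn from these four points can yield the midpoint inequality. Take the unit sphere, put $x,m,y$ on the equator with $d(x,m)=d(m,y)=r\le\pi/3$, and let $z$ be the north pole.
\begin{itemize}
\item Consider the three ways of splitting the four distinct points into two pairs. The left-hand side of the inequality takes the values $0$, $\pm\frac32 r^2$, $\pm\frac32 r^2$. The corresponding right-hand sides are $\pi r$, $\frac{\pi}{2}r$, $\frac{\pi}{2}r$.
\item Since $r\le\pi/3$, every such instance holds. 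Instances with a repeated point hold automatically, as above.
\item Yet the midpoint inequality would require $\pi^2/4=d(m,z)^2\le \pi^2/4-r^2$, which is false.
\end{itemize}

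So the converse must apply the four-point inequality to auxiliary points outside $\{x,y,m,z\}$, for example points of $[m,z]$ or midpoints of $[x,z]$ and $[y,z]$. It must also combine these instances into a genuinely new argument that recovers the sharp constant. That is the nontrivial half of the Berg--Nikolaev theorem, and your proposal does not supply it.
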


\begin{theorem}[\cite{MR4021035}]\label{thm:metrically-nonspareding}
 Let $X$ be an Hadamard space and $T\colon X\to X$ a metrically nonspreading mapping. 
 Then $\Fix(T)$ is nonempty if and only if $\{T^nx\}$ is bounded for some $x\in X$. 
 In this case, $\{T^nx\}$ is $\Delta$-convergent to an element of $\Fix(T)$ 
 for each $x\in X$. 
\end{theorem}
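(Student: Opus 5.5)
The plan is to follow the Hilbert-space argument for nonexpansive-type mappings. Opial's lemma is replaced by the $\Delta$-convergence machinery available in Hadamard spaces, namely asymptotic centers and the fact that bounded sequences have $\Delta$-convergent subsequences.

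\emph{Only if.} Suppose $\Fix(T)\neq\emptyset$ and take $p\in\Fix(T)$. Substituting $y=p$ into the metrically nonspreading inequality gives
\begin{align}
2d(Tx,p)^2\le d(Tx,p)^2+d(p,x)^2 .
\end{align}
Hence $d(Tx,p)\le d(x,p)$, so $T$ is quasi-nonexpansive. Consequently $\{d(T^nx,p)\}$ is nonincreasing, and $\{T^nx\}$ is bounded for every $x$.

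\emph{If.} Suppose $\{T^nx\}$ is bounded and write $x_n=T^nx$. Apply the defining inequality with $x=x_n$ and $y=z$ for an arbitrary $z\in X$:
\begin{align}
2d(x_{n+1},Tz)^2\le d(x_{n+1},z)^2+d(Tz,x_n)^2 .
\end{align}
Take $\limsup_n$ of both sides and set $r(w)=\limsup_n d(x_n,w)^2$. Since shifting the index does not change the $\limsup$, we obtain $2r(Tz)\le r(z)+r(Tz)$, that is, $r(Tz)\le r(z)$ for all $z$.

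In a Hadamard space the function $r$ has a unique minimizer, the asymptotic center $z_0$ of $\{x_n\}$. This follows from the inequality~\eqref{eq:CN-inequality}, which makes $r$ uniformly convex, together with completeness. Since $r(Tz_0)\le r(z_0)$, uniqueness forces $Tz_0=z_0$, so $\Fix(T)\neq\emptyset$.

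\emph{$\Delta$-convergence.} Now assume $\Fix(T)\neq\emptyset$ and fix $x$. By the first part $\{x_n\}$ is bounded, and $d(x_n,p)$ is nonincreasing for every $p\in\Fix(T)$, so $\{x_n\}$ is Fejér monotone with respect to $\Fix(T)$.

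Let $\{x_{n_k}\}$ be any subsequence. Applying the argument above to the bounded sequence $\{x_{n_k}\}$ shows that its asymptotic center is a fixed point. Two things are needed to run that argument along the subsequence:
\begin{itemize}
\item the $\limsup$ must still behave correctly, which uses that $\lim_k d(x_{n_k},Tz)$ relates to $\lim_k d(x_{n_k+1},Tz)$;
\item more simply, one can restrict to $z\in\Fix(T)$, or show asymptotic regularity.
\end{itemize}

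I expect the following to be the main obstacle. For a $\Delta$-cluster point $v$, that is, the asymptotic center of a subsequence $\{x_{n_k}\}$, I need $Tv=v$. The step $x_{n_k}\mapsto x_{n_k+1}$ breaks the subsequence argument. I would first try to establish asymptotic regularity, $d(x_n,x_{n+1})\to0$. Putting $y=Tx$ in the inequality does not directly give this, so I would instead use a sharper route. Apply the inequality with $x=x_{n_k}$ and $y=v$:
\begin{align}
2d(x_{n_k+1},Tv)^2\le d(x_{n_k+1},v)^2+d(Tv,x_{n_k})^2 .
\end{align}
Then use the fact that $d(x_n,Tv)$ and $d(x_n,v)$ can be controlled via Fejér monotonicity only when $v$ is fixed, which is circular. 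So the plan I commit to is different: define $r_k(w)=\limsup_k d(x_{n_k},w)^2$ and the shifted functional $s(w)=\limsup_k d(x_{n_k+1},w)^2$. I would show that $v$ minimizes both, by relating $d(x_{n_k+1},\cdot)$ to $d(x_{n_k},\cdot)$ through the inequality and the convergence of $d(x_n,p)$ for fixed points $p$.

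Once every $\Delta$-cluster point is known to lie in $\Fix(T)$, the standard Hadamard-space analogue of Opial's lemma applies. By Fejér monotonicity, $\lim d(x_n,p)$ exists for every $p\in\Fix(T)$. Uniqueness of asymptotic centers then shows that all $\Delta$-cluster points coincide, and therefore the whole sequence $\{x_n\}$ $\Delta$-converges to a single element of $\Fix(T)$.
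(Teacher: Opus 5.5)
The paper does not prove this theorem; it quotes it from the cited reference, so there is no in-paper argument to compare with. Your first two assertions are proved correctly. Putting $y=p\in\Fix(T)$ gives quasi-nonexpansiveness and hence bounded orbits. Putting $x=T^nx$, $y=z$ and using the shift invariance of $\limsup_n$ gives $r(Tz)\le r(z)$, so the unique asymptotic center of $\{T^nx\}$ is a fixed point.

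The $\Delta$-convergence assertion, however, is not proved. After Fejér monotonicity, everything depends on showing $\omega_{\Delta}(\{T^nx\})\subset\Fix(T)$; Lemma~\ref{lem:Kimura-K-omega} would then finish. For that step you give only a plan, and neither route works as stated:
\begin{itemize}
\item Rerunning the asymptotic-center argument on a subsequence $\{x_{n_k}\}$ fails for the reason you noticed. The argument used $\limsup_n d(x_{n+1},w)=\limsup_n d(x_n,w)$, which has no analogue along a subsequence, since $\{x_{n_k+1}\}$ may have a different asymptotic center from $\{x_{n_k}\}$.
\item Showing that $v$ minimizes both $r(w)=\limsup_k d(x_{n_k},w)^2$ and $s(w)=\limsup_k d(x_{n_k+1},w)^2$ is a restatement of what is needed, not a proof of it. The defining inequality with $x=x_{n_k}$, $y=z$ yields only $2s(Tz)\le s(z)+r(Tz)$, which couples the two functionals. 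Nothing you list (Fejér monotonicity, convergence of $d(x_n,p)$ for $p\in\Fix(T)$) relates $s$ to $r$ at points outside $\Fix(T)$.
\end{itemize}

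What is missing is some form of asymptotic regularity, or a substitute for it. Metric nonspreadingness does not give it through the substitutions you tried:
\begin{itemize}
\item With $y=p\in\Fix(T)$ the cross terms cancel down to $d(Tx,p)\le d(x,p)$, with no remainder.
\item With $y=Tx$ you get only $2d(x_{n+1},x_{n+2})^2\le d(x_n,x_{n+2})^2$.
\end{itemize}

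Compare the firmly metrically nonspreading case, which is the one the paper actually uses later, for $R_f$. There $y=p$ gives $d(Tx,p)^2+d(Tx,x)^2\le d(x,p)^2$, hence $\sum_n d(x_n,x_{n+1})^2<\infty$. The inequality with $x=x_{n_k}$, $y=v$ then yields $\limsup_k d(x_{n_k},Tv)\le\limsup_k d(x_{n_k},v)$, so $Tv=v$ by uniqueness of the asymptotic center.

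For a merely metrically nonspreading $T$ you need a genuinely different argument for $\omega_{\Delta}(\{T^nx\})\subset\Fix(T)$. Your proposal does not supply one, so the last part of the statement remains unproved.
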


As in the proof of~\cite[Lemma~2.5]{MR3777000}, we can prove the following: 

\begin{lemma}\label{lem:f-net}
 Let $I$ be a nonempty closed subset of $\R$, 
 $\{t_{\alpha}\}_{\alpha \in A}$ a net in $I$ such that 
 $\{t_{\alpha}\}_{\alpha\geq \alpha_0}$ is bounded 
 for some $\alpha_0\in A$, and 
 $f\colon I\to \R$ a nondecreasing and continuous function. 
 Then 
 \begin{align*}
  f\left(\limsup_{\alpha}t_{\alpha}\right) 
  = \limsup_{\alpha} f(t_{\alpha}). 
 \end{align*}
\end{lemma}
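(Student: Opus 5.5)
We argue directly from the definition $\limsup_{\alpha} t_{\alpha} = \inf_{\beta\in A}\sup_{\alpha\geq \beta} t_{\alpha}$. Set $s_{\beta}=\sup_{\alpha\geq\beta}t_{\alpha}$ for $\beta\geq \alpha_0$. Boundedness of $\{t_{\alpha}\}_{\alpha\geq\alpha_0}$ makes each such $s_{\beta}$ a real number. The net $\{s_{\beta}\}_{\beta\geq\alpha_0}$ is nonincreasing and bounded below, so $L=\limsup_{\alpha}t_{\alpha}=\inf_{\beta\geq\alpha_0}s_{\beta}=\lim_{\beta}s_{\beta}$ is a real number. Because $I$ is closed and the $t_{\alpha}$ lie in $I$, both $s_{\beta}$ and $L$ lie in $I$: each $s_{\beta}$ is a limit of points $t_{\alpha}$, and $L$ is a limit of the $s_{\beta}$. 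Hence $f(s_{\beta})$ and $f(L)$ are defined.

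The key step is the identity $\sup_{\alpha\geq\beta}f(t_{\alpha})=f(s_{\beta})$ for every $\beta\geq\alpha_0$. The inequality $\leq$ holds because $f$ is nondecreasing and $t_{\alpha}\leq s_{\beta}$. For $\geq$, we pick indices $\alpha_k\geq\beta$ with $t_{\alpha_k}\to s_{\beta}$. Continuity of $f$ at $s_{\beta}\in I$ then gives $f(t_{\alpha_k})\to f(s_{\beta})$, so $\sup_{\alpha\geq\beta}f(t_{\alpha})\geq f(s_{\beta})$.

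It follows that
\begin{align*}
\limsup_{\alpha}f(t_{\alpha})=\inf_{\beta\geq\alpha_0}f(s_{\beta})=\lim_{\beta}f(s_{\beta}).
\end{align*}
The last equality holds because $\{f(s_{\beta})\}$ is nonincreasing, since $f$ is monotone. Since $s_{\beta}\to L$ within $I$ and $f$ is continuous at $L$, we get $\lim_{\beta}f(s_{\beta})=f(L)$, which proves the claim. Restricting to indices $\beta\geq\alpha_0$ is harmless, because the infimum defining $\limsup$ over a directed set is unchanged when we pass to a cofinal tail.

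The main obstacle is bookkeeping rather than a real difficulty. One must verify that $s_{\beta}$ and $L$ belong to $I$, which is exactly where the closedness of $I$ enters. One must also check that a monotone bounded net has the limit equal to its infimum, so that continuity can be applied along the net $\{s_{\beta}\}$. Where the net is not indexed by a sequence, we can avoid sequences altogether: for each $\varepsilon>0$, continuity gives some $t_{\alpha}$ close enough to $s_{\beta}$ that $f(t_{\alpha})>f(s_{\beta})-\varepsilon$.
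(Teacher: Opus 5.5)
Your proof is correct and complete. Closedness of $I$ puts the tail suprema $s_\beta$ and their infimum $L$ in $I$. The identity $\sup_{\alpha\geq\beta}f(t_\alpha)=f(s_\beta)$ follows from monotonicity together with continuity at $s_\beta$. Continuity at $L$ along the nonincreasing net $\{s_\beta\}_{\beta\geq\alpha_0}$ then finishes the argument, and you justify the restriction to the cofinal tail $\beta\geq\alpha_0$ correctly for both upper limits.

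The paper writes out no argument of its own here. It only says that the proof of Lemma~2.5 in the cited reference, which is a statement about sequences, carries over to nets. There is therefore no written route for yours to diverge from. Your write-up supplies exactly what the paper leaves implicit, and it works for nets directly because it never extracts subnets or uses compactness; it only uses the monotone net of tail suprema.

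One small remark: your closing caveat about nets not indexed by sequences is unnecessary. Choosing $\alpha_k\geq\beta$ with $t_{\alpha_k}\to s_\beta$ only uses that $s_\beta$ is the supremum of the set $\{t_\alpha:\alpha\geq\beta\}\subset\R$, not any sequential structure of $A$.
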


If $C$ is a nonempty closed convex subset of an Hadamard space 
and $x\in X$, then there exists a unique $\hat{x}\in C$ such that 
\begin{align}
 d(\hat{x}, x) = \inf_{y\in C} d(y,x). 
\end{align}
The metric projection $P_C$ of $X$ onto $C$ is defined by 
$P_C(x)=\hat{x}$ for all $x\in X$. 
It is well known that $P_C$ is firmly metrically nonspreading. 
This implies that if $x\in X$ and $y\in C$, then 
\begin{align}
 d\bigl(P_C(x), y\bigr)^2 + d\bigl(P_C(x),x\bigr)^2 
 \leq d(x, y)^2. 
\end{align}
It also follows from the firmly metrical nonspreadingness of $P_C$ 
and Theorem~\ref{thm:CS-inequality} that 
$P_C$ is nonexpansive. 
In particular, $P_C$ is quasi-nonexpansive. 

Let $(X,d)$ be a $\CAT(0)$ space and 
$\{x_n\}$ a sequence in $X$. Then 
the asymptotic center $\AC\bigl(\{x_n\}\bigr)$ of 
$\{x_n\}$ is defined by 
\begin{align}
 \AC\bigl(\{x_n\}\bigr) 
 = \left\{
 u\in X: \limsup_{n\to \infty}d(u, x_n) = \inf_{y\in X} 
 \limsup_{n\to \infty} d(y, x_n)
 \right\}. 
\end{align}
It is obvious that $\AC\bigl(\{x_n\}\bigr)=X$ 
if $\{x_n\}$ is unbounded. 
The sequence $\{x_n\}$ is said to be $\Delta$-convergent to $p\in X$ 
if $\AC\bigl(\{x_{n_i}\}\bigr) =\{p\}$ 
for each subsequence $\{x_{n_i}\}$ of $\{x_n\}$. 
In this case, $\{x_n\}$ is bounded. 
If $\{x_n\}$ is a sequence in 
a nonempty closed convex subset $X$ of a real Hilbert space 
and $p\in X$, then 
$\{x_n\}$ is $\Delta$-convergent to $p$ 
if and only if it is weakly convergent to $p$. 
We denote by $\omega_{\Delta}\bigl(\{x_n\}\bigr)$ 
the set of all $p\in X$ such that 
there exists a subsequence $\{x_{n_i}\}$ of $\{x_n\}$ 
such that $\{x_{n_i}\}$ is $\Delta$-convergent to $p$. 
For a net $\{x_{\alpha}\}$ in $X$, we can 
similarly define its asymptotic center by 
\begin{align}
 \AC\bigl(\{x_{\alpha}\}\bigr) 
 = \left\{
 u\in X: \limsup_{\alpha}d(u, x_{\alpha}) 
 = \inf_{y\in X} 
 \limsup_{\alpha} d(y, x_{\alpha})
 \right\}. 
\end{align}
It is said to be $\Delta$-convergent to $p\in X$ if 
$\AC\bigl(\{x_{\alpha_{\beta}}\}\bigr)=\{p\}$ 
for each subnet $\{x_{\alpha_{\beta}}\}$ of 
$\{x_{\alpha}\}$. 
For a sequence $\{x_n\}$ in $X$ and $p\in X$, 
it seems not to be clear whether 
$\{x_n\}$ is $\Delta$-convergent to $p$ 
as a sequence if and only if 
$\{x_n\}$ is $\Delta$-convergent to $p$ 
as a net. The following lemma answers to this question 
affirmatively: 

\begin{lemma}\label{lem:Delta-net}
 Let $(X,d)$ be a $\CAT(0)$ space and $\bar{x}$ an element in $X$. 
 Then the following hold: 
 \begin{enumerate}
  \item[(i)] If $\{x_{\alpha}\}_{\alpha\in A}$ is a net in $X$, 
 then $\{x_{\alpha}\}$ is $\Delta$-convergent to $\bar{x}$ 
 if and only if 
 $\{x_{\alpha}\}_{\alpha \geq \alpha_0}$ is bounded 
 for some $\alpha_0\in A$ and 
 $\{P_{[\bar{x},y]}x_{\alpha}\}$ is convergent to $\bar{x}$ 
 for all $y\in X$; 
  \item[(ii)] if $\{x_{n}\}$ is a sequence in $X$, 
 then $\{x_{n}\}$ is $\Delta$-convergent to $\bar{x}$ 
 as a sequence  
 if and only if 
 $\{x_{n}\}$ is bounded and 
 $\{P_{[\bar{x},y]}x_{n}\}$ is convergent to $\bar{x}$ 
 for all $y\in X$; 
  \item[(iii)] if $\{x_{n}\}$ is a sequence in $X$, 
 then $\{x_{n}\}$ is $\Delta$-convergent to $\bar{x}$ 
 as a sequence  
 if and only if 
 it is $\Delta$-convergent to $\bar{x}$ 
 as a net. 
 \end{enumerate}
\end{lemma}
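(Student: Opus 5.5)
Part (i) is the core; (ii) and (iii) will follow from it. The plan is to prove (i) with a projection argument built on the comparison inequality for $P_{[\bar x,y]}$. Fix $y\in X$ and set $C=[\bar x,y]$. Since $C$ is a closed convex subset (a segment in a CAT(0) space, and complete), $P_C$ is defined even without completeness of $X$. The key inequality is firm metric nonspreadingness of $P_C$ with the point $\bar x\in C$:
\begin{align*}
 d(P_Cx_\alpha,\bar x)^2+d(P_Cx_\alpha,x_\alpha)^2\le d(x_\alpha,\bar x)^2 .
\end{align*}
There is also a reverse-type estimate. For $z\in[\bar x,P_Cx_\alpha]$, the angle at $P_Cx_\alpha$ is at least $\pi/2$ (projection property). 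Hence $d(z,x_\alpha)^2\ge d(z,P_Cx_\alpha)^2+d(P_Cx_\alpha,x_\alpha)^2$.

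For the direction ``$\Rightarrow$'' in (i), I first get eventual boundedness. If no tail is bounded, then $\limsup_\alpha d(u,x_\alpha)=\infty$ for every $u$, so $\AC=X\ne\{\bar x\}$ unless $X=\{\bar x\}$, which is trivial. Next, suppose $P_Cx_\alpha\not\to\bar x$ for some $y$. Then there is a subnet with $d(P_Cx_{\alpha_\beta},\bar x)\ge\varepsilon$, and by compactness of $C$ (isometric to an interval) a further subnet with $P_Cx_{\alpha_\beta}\to z_0\in C$, $z_0\neq\bar x$. Take $z=(1-t)\bar x\oplus t z_0$ close to $z_0$. Combining the two inequalities above with CN-type comparison gives
\begin{align*}
\limsup_\beta d(z,x_{\alpha_\beta})^2\le\limsup_\beta d(\bar x,x_{\alpha_\beta})^2-c\,d(\bar x,z_0)^2
\end{align*}
for some $c>0$. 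This contradicts $\bar x\in\AC$ of that subnet. Alternatively, I could simply compare with the point $z_0$ itself: a Pythagorean-type inequality yields $d(\bar x,x)^2\ge d(\bar x,P_Cx)^2+d(P_Cx,x)^2$ and $d(z_0,x)\le d(z_0,P_Cx)+d(P_Cx,x)$.

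For ``$\Leftarrow$'', take any subnet and any $y\neq\bar x$, and put $C=[\bar x,y]$. By the obtuse-angle inequality,
\begin{align*}
 d(y,x_\alpha)^2\ge d(y,P_Cx_\alpha)^2+d(P_Cx_\alpha,x_\alpha)^2 .
\end{align*}
Since $P_Cx_\alpha\to\bar x$, we also have
\begin{align*}
 d(\bar x,x_\alpha)\le d(\bar x,P_Cx_\alpha)+d(P_Cx_\alpha,x_\alpha),
\end{align*}
so $d(\bar x,x_\alpha)\le d(P_Cx_\alpha,x_\alpha)+o(1)$ and $d(y,x_\alpha)^2\ge d(y,\bar x)^2+d(\bar x,x_\alpha)^2-o(1)$. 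Taking $\limsup$ along the subnet (finite, by boundedness) gives
\begin{align*}
\limsup d(y,x_\alpha)^2\ge d(y,\bar x)^2+\limsup d(\bar x,x_\alpha)^2 ,
\end{align*}
using Lemma~\ref{lem:f-net} to pass squares through $\limsup$. Thus $\bar x$ is the unique asymptotic center of every subnet.

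Part (ii) is the same argument with sequences and subsequences; in ``$\Rightarrow$'' I will use Bolzano–Weierstrass on $C$. For (iii), (ii) characterizes sequential $\Delta$-convergence by boundedness plus $P_Cx_n\to\bar x$. For the sequence viewed as a net, eventual boundedness is equivalent to boundedness, and convergence as a net is the same as convergence as a sequence. Hence (i) and (ii) have identical right-hand sides.

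The main obstacle is the obtuse-angle (Pythagorean) inequality $d(z,x)^2\ge d(z,P_Cx)^2+d(P_Cx,x)^2$ for $z\in C$. The stated firm nonspreading inequality goes the other way. I plan to derive it from the variational characterization of $P_C$: the Cauchy–Schwarz quasi-inner-product version (Theorem~\ref{thm:CS-inequality}) applied to the geodesic from $P_Cx$ toward $z$, together with minimality of $d(\cdot,x)$ on that geodesic and the CN inequality \eqref{eq:CN-inequality}, letting the parameter tend to $0$.
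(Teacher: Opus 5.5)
Your proposal is correct and follows essentially the paper's proof: the projection inequality $d(z,P_Cx)^2+d(P_Cx,x)^2\le d(z,x)^2$ for $z\in C=[\bar{x},y]$ drives both directions of (i), with Lemma~\ref{lem:f-net} handling the $\limsup$ of squares in the ``if'' part. In the ``only if'' part, compactness of the segment and comparison with the limit point $z_0\neq\bar{x}$ do the work; your concrete ``alternative'' there is the paper's argument, and your vaguer first route with a constant $c>0$ is unnecessary. Part (ii) is the sequential analogue, and (iii) follows because the right-hand sides of (i) and (ii) coincide.

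One correction. The ``obtuse-angle'' inequality you call the main obstacle does not go the other way from firm metric nonspreadingness. Inserting a point $z\in C$ (so $P_Cz=z$) into the firm-nonspreading inequality gives exactly it, and your ``key inequality'' is just its case $z=\bar{x}$. Your derivation from minimality along $[P_Cx,z]$ together with \eqref{eq:CN-inequality} is still valid (Cauchy--Schwarz is not needed). It is also appropriate here, since $X$ is only assumed $\CAT(0)$, not complete.
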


\begin{proof}
 Note that if $u,v\in X$, then 
 $[u,v]$ is a nonempty compact convex subset of $X$. 
 Thus the metric projection $P_{[u,v]}$ of $X$ onto $[u,v]$ 
 is well defined and 
 satisfies the following inequality: 
 \begin{align}
  d\bigl(z, P_{[u,v]}x\bigr)^2 + d\bigl(P_{[u,v]}x, x\bigr)^2 
  \leq d(z, x)^2
 \end{align}
 for all $z\in [u,v]$ and $x\in X$. 
 
 We first prove the if part of~(i). 
 Suppose that $\{x_{\alpha}\}_{\alpha \geq \alpha_0}$ is bounded 
 for some $\alpha_0\in A$ and 
 $\{P_{[\bar{x},y]}x_{\alpha}\}$ is convergent to $\bar{x}$ 
 for all $y\in X$. Let $y\in X\setminus \{\bar{x}\}$ be given 
 and let $\{x_{\alpha_{\beta}}\}_{\beta\in B}$ be a subnet of $\{x_{\alpha}\}$. 
 Note that there exists $\beta_0\in B$ such that 
 $\beta_0\leq \beta$ implies that $\alpha_0\leq \alpha_{\beta}$. 
 Thus $\{x_{\alpha_{\beta}}\}_{\beta\geq \beta_0}$ is bounded. 
 This implies that all the upper limits in the proof of 
 this part are finite. 
 Set $P=P_{[\bar{x},y]}$. 
 Since 
 \begin{align}
 \begin{split}
  &\Bigl\lvert 
  d(Px_{\alpha_{\beta}},x_{\alpha_{\beta}})^2 - d(\bar{x},x_{\alpha_{\beta}})^2
  \Bigr\lvert \\
  &\quad \leq 
  \bigl\{
   d(Px_{\alpha_{\beta}},x_{\alpha_{\beta}}) + d(\bar{x},x_{\alpha_{\beta}})
  \bigr\}
    d(Px_{\alpha_{\beta}},\bar{x})  \to 0, 
 \end{split}
 \end{align}
 we have 
 \begin{align}
  \limsup_{\beta}d(Px_{\alpha_{\beta}},x_{\alpha_{\beta}})^2 
  =\limsup_{\beta}d(\bar{x},x_{\alpha_{\beta}})^2. 
 \end{align}
 Then we have 
 \begin{align}
  \begin{split}
   \limsup_{\beta}d(y,x_{\alpha_{\beta}})^2 
   &\geq    \limsup_{\beta}
  \bigl\{
   d(y,Px_{\alpha_{\beta}})^2 + d(Px_{\alpha_{\beta}},x_{\alpha_{\beta}})^2 
  \bigr\} \\
   &=    d(y,\bar{x})^2 
 +\limsup_{\beta}
  d(Px_{\alpha_{\beta}},x_{\alpha_{\beta}})^2 \\
   &=    d(y,\bar{x})^2 
 +\limsup_{\beta}
  d(\bar{x},x_{\alpha_{\beta}})^2 
   >
 \limsup_{\beta}
  d(\bar{x},x_{\alpha_{\beta}})^2. 
  \end{split}
 \end{align}
Using Lemma~\ref{lem:f-net}, we obtain 
\begin{align}
 \begin{split}
\Bigl\{\limsup_{\beta}d(y,x_{\alpha_{\beta}})\Bigr\}^2 
 &=\limsup_{\beta}d(y,x_{\alpha_{\beta}})^2 \\
 &> \limsup_{\beta}d(\bar{x},x_{\alpha_{\beta}})^2
 = \Bigl\{\limsup_{\beta}d(\bar{x},x_{\alpha_{\beta}})\Bigr\}^2. 
 \end{split}
\end{align}
 This means that 
 $\AC\bigl(\{x_{\alpha_{\beta}}\}\bigr)=\{\bar{x}\}$ 
 and hence $\{x_{\alpha}\}$ is $\Delta$-convergent to $\bar{x}$. 

 We next prove the only if part of~(i).  
 Suppose that 
 $\{x_{\alpha}\}_{\alpha\in A}$ is $\Delta$-convergent to $\bar{x}$. 
 We prove that $\{x_{\alpha}\}_{\alpha\geq \alpha_0}$ 
 is bounded for some $\alpha_0 \in A$. 
 We may assume that $X\neq \{\bar{x}\}$. 
 Then we have $y\in X\setminus \{\bar{x}\}$. 
 By assumption, we have 
 \begin{align}
  \limsup_{\alpha}d(\bar{x},x_{\alpha}) < 
\limsup_{\alpha}d(y,x_{\alpha}). 
 \end{align}
 Thus we have $\alpha_0\in A$ such that 
 \begin{align}
  \sup_{\alpha_0\leq \gamma} d(\bar{x},x_{\gamma}) 
 < \limsup_{\alpha}d(y,x_{\alpha}). 
 \end{align}
 This implies that $\{x_{\alpha}\}_{\alpha\geq \alpha_0}$ is bounded. 
 Suppose that 
 there exists $y\in X$ such that 
 $\{P_{[\bar{x},y]}x_{\alpha}\}$ is not convergent to $\bar{x}$. 
 Set $P=P_{[\bar{x},y]}$. Since $[\bar{x},y]$ is compact, 
 we have a subnet $\{Px_{\alpha_{\beta}}\}$ of $\{Px_{\alpha}\}$ 
 which is convergent to $\hat{x}$. Then we have $\bar{x}\neq \hat{x}$. 
 Since $\AC\bigl(\{x_{\alpha_{\beta}}\}\bigr)=\{\bar{x}\}$, we have  
 and 
 \begin{align}
  \limsup_{\beta} d(\bar{x},x_{\alpha_{\beta}})
  < \limsup_{\beta} d(\hat{x},x_{\alpha_{\beta}}). 
 \end{align}
 On the other hand, since $Px_{\alpha_{\beta}}\to \hat{x}$, we have 
 \begin{align}
  \limsup_{\beta} d(\hat{x},x_{\alpha_{\beta}}) 
  =\limsup_{\beta} d(Px_{\alpha_{\beta}},x_{\alpha_{\beta}}) 
  \leq \limsup_{\beta} d(\bar{x},x_{\alpha_{\beta}}) 
 \end{align}
 This is a contradiction. Thus 
 $\{P_{[\bar{x},y]}x_{\alpha}\}$ is convergent to $\bar{x}$ 
 for all $y\in X$. 

 We can similarly prove the part~(ii). 
 The part~(iii) obviously follows from the previous parts. 
\end{proof}

We know the following results: 

\begin{theorem}[\cite{MR2416076}]\label{thm:Kirk--Panyanak}
 If $\{x_n\}$ is a bounded sequence in an Hadamard space $X$, 
 then there exists $p\in X$ such that $\AC\bigl(\{x_n\}\bigr)=\{p\}$ 
 and there exists a subsequence $\{x_{n_i}\}$ of $\{x_n\}$ 
 which is $\Delta$-convergent to $p$. 
\end{theorem}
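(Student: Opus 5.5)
The plan is to prove uniqueness of the asymptotic center first, then existence, and finally to extract the $\Delta$-convergent subsequence.

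\textbf{Step 1 (the asymptotic center is a single point).} Consider
\[
 r(y)=\limsup_{n\to\infty} d(y,x_n), \qquad r=\inf_{y\in X} r(y).
\]
This function is finite because $\{x_n\}$ is bounded, and it is $1$-Lipschitz. I will apply the inequality~\eqref{eq:CN-inequality} with $\alpha=1/2$ to $u,v$ and $z=x_n$, and then take $\limsup$ using Lemma~\ref{lem:f-net}. This gives
\[
 r(m)^2\le \tfrac12 r(u)^2+\tfrac12 r(v)^2-\tfrac14 d(u,v)^2, \qquad m=\tfrac12u\oplus\tfrac12v .
\]
Now take a minimizing sequence $\{y_k\}$ with $r(y_k)\to r$. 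Applying the inequality with $u=y_k$, $v=y_l$ and using $r(m)\ge r$ shows that $d(y_k,y_l)\to0$. So $\{y_k\}$ is Cauchy and converges, by completeness, to some $p$. Continuity of $r$ gives $r(p)=r$. If $q$ is another minimizer, the same inequality gives $r^2\le r^2-\tfrac14 d(p,q)^2$, hence $q=p$. Therefore $\AC(\{x_n\})=\{p\}$. The argument applies verbatim to every subsequence, so every subsequence has a unique asymptotic center.

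\textbf{Step 2 (a subsequence that is $\Delta$-convergent).} I will use the standard regularity argument. Call $\{x_n\}$ \emph{regular} if every subsequence has the same asymptotic radius $r(\cdot)$. Set
\[
 \rho(\{x_{n}\})=\inf\{\, r(\{x_{n_i}\}) : \{x_{n_i}\} \text{ a subsequence}\,\}.
\]
Iteratively choose nested subsequences $\{x^{(k+1)}\}\subset\{x^{(k)}\}$ with
\[
 r(\{x^{(k+1)}\})\le \rho(\{x^{(k)}\})+1/k .
\]
The diagonal sequence $\{v_k\}$ is then regular, since asymptotic radii only decrease on passing to subsequences and the gap between them is forced to $0$.

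\textbf{Step 3 (all subsequences of the regular sequence share one center).} For the regular sequence $\{v_k\}$ with center $p$, suppose some subsequence has center $q\ne p$. Both $\{v_k\}$ and the subsequence have the same radius $r$. Moreover,
\[
 \limsup d(q,v_{k_i})=r\le\limsup d(p,v_{k_i})\le\limsup d(p,v_k)=r .
\]
So $p$ is also a minimizer for the subsequence, contradicting the uniqueness from Step 1. Hence every subsequence of $\{v_k\}$ has asymptotic center $\{p'\}$, where $p'$ is the center of $\{v_k\}$, and so $\{v_k\}$ is $\Delta$-convergent to $p'$.

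To match the statement literally, I also need the original center $p$ and the limit of the subsequence to be the same point. Here I would point out that the statement asks only that \emph{some} subsequence $\Delta$-converges to $p$, and this is where I expect the main obstacle. The regular subsequence's center may in general differ from the center of the full sequence, so simply choosing a regular subsequence does not suffice.

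My first attempt at fixing this will be to build the subsequence around $p$ from the start. I would choose points $y_j\to$ nearby, and using Lemma~\ref{lem:Delta-net}(ii) select $n_i$ such that $P_{[p,y]}x_{n_i}\to p$ for a countable dense family of $y$ in the closed convex hull of $\{p\}\cup\{x_n\}$, which is separable. A diagonal argument, combined with the $1$-Lipschitz property of projections, would then handle all $y$.

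If that fails, I would fall back to the citation~\cite{MR2416076}, where the existence of the center and of a $\Delta$-convergent subsequence is asserted.
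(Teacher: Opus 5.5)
\paragraph{The gap: the final identification.}
Your Steps~1--3 are sound. Step~1 is the standard argument, via \eqref{eq:CN-inequality}, Lemma~\ref{lem:f-net} and completeness, that $\AC\bigl(\{x_n\}\bigr)$ is a singleton $\{p\}$. Steps~2--3 correctly produce a regular subsequence $\{v_k\}$ that is $\Delta$-convergent to its own asymptotic center $p'$. (In Step~3 you first call this point $p$, which is a notational slip.) The gap is exactly where you suspected: identifying $p'$ with $p$. It is not a technical obstacle, though. The statement as literally written is false, so neither your projection-based repair nor any other argument can close it.

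\paragraph{Counterexample.}
Take $X=\R$ with $x_n=0$ for odd $n$ and $x_n=1$ for even $n$. Then $\limsup_{n}\abs{y-x_n}=\max\{\abs{y},\abs{y-1}\}$, so $\AC\bigl(\{x_n\}\bigr)=\{1/2\}$. Every subsequence of $\{x_n\}$ has a further subsequence that is constantly $0$ or constantly $1$, and its asymptotic center is $\{0\}$ or $\{1\}$. Hence no subsequence is $\Delta$-convergent to $1/2$.

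\paragraph{Why your repair and the citation do not help.}
Your repair breaks down visibly on this example. With $p=1/2$, the condition $P_{[p,1]}x_{n_i}\to p$ forces $x_{n_i}=0$ eventually. The condition $P_{[p,0]}x_{n_i}\to p$ forces $x_{n_i}=1$ eventually. So the requirements of Lemma~\ref{lem:Delta-net}(ii) for different $y$ cannot be met simultaneously, and no diagonal extraction helps. Falling back on the citation does not rescue the statement either. What Kirk and Panyanak prove is that a bounded sequence has a $\Delta$-convergent subsequence, not one converging to the center of the whole sequence.

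\paragraph{What you should conclude instead.}
Stop after Step~3 and point out, with the example above, that the two assertions must be decoupled. Then prove the statement in this form: $\AC\bigl(\{x_n\}\bigr)$ is a singleton, and $\{x_n\}$ has a subsequence that is $\Delta$-convergent to some point of $X$, in general different from the center of $\{x_n\}$. Your Steps~1--3 are a complete proof of that version.

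This weaker form is also all the paper needs. Where the result is invoked, in the proof of Lemma~\ref{lem:Delta-bdd}, one only needs a subsequence $\Delta$-convergent to some $u\in X$. The paper supplies no proof of its own for this cited result, so there is nothing further to compare against.
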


\begin{lemma}[\cite{MR3574140}]\label{lem:Kimura-K-omega}
 Let $X$ be an Hadamard space and $\{x_n\}$ a bounded sequence in $X$ such that 
 $\{d(u, x_n)\}$ is convergent 
 for each $u$ in $\omega_{\Delta}\bigl(\{x_n\}\bigr)$. 
 Then $\{x_n\}$ is $\Delta$-convergent to an element of $X$. 
\end{lemma}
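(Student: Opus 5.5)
The plan is to combine the Opial-type property built into the definition of $\Delta$-convergence with Theorem~\ref{thm:Kirk--Panyanak}. There are three steps: show that $\omega_{\Delta}\bigl(\{x_n\}\bigr)$ is nonempty, show that it is a singleton $\{p\}$, and then check directly that every subsequence has asymptotic center $\{p\}$.

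\emph{Nonemptiness.} Since $\{x_n\}$ is bounded and $X$ is an Hadamard space, Theorem~\ref{thm:Kirk--Panyanak} gives a subsequence that is $\Delta$-convergent to some point. Hence $\omega_{\Delta}\bigl(\{x_n\}\bigr)\neq\emptyset$.

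\emph{Uniqueness.} Let $p,q\in\omega_{\Delta}\bigl(\{x_n\}\bigr)$, with subsequences $\{x_{n_i}\}$ $\Delta$-convergent to $p$ and $\{x_{m_j}\}$ $\Delta$-convergent to $q$. Suppose $p\neq q$. Because $\AC\bigl(\{x_{n_i}\}\bigr)=\{p\}$ and the sequence is bounded (so all upper limits are finite), we get
\begin{align*}
\limsup_{i} d(p,x_{n_i})<\limsup_{i} d(q,x_{n_i}).
\end{align*}
By hypothesis the limits $\lim_n d(p,x_n)$ and $\lim_n d(q,x_n)$ both exist. Along any subsequence the upper limits therefore equal these full limits, so $\lim_n d(p,x_n)<\lim_n d(q,x_n)$. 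Running the same argument with $\{x_{m_j}\}$ gives the reverse strict inequality, which is a contradiction. Hence $\omega_{\Delta}\bigl(\{x_n\}\bigr)=\{p\}$ for a single point $p$.

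\emph{$\Delta$-convergence of the whole sequence.} By definition we need $\AC\bigl(\{x_{n_i}\}\bigr)=\{p\}$ for every subsequence $\{x_{n_i}\}$. Such a subsequence is bounded, so Theorem~\ref{thm:Kirk--Panyanak} applied to it gives a point $r$ with $\AC\bigl(\{x_{n_i}\}\bigr)=\{r\}$. The same theorem supplies a further subsequence that is $\Delta$-convergent to $r$. That further subsequence is also a subsequence of $\{x_n\}$, so $r\in\omega_{\Delta}\bigl(\{x_n\}\bigr)=\{p\}$, and thus $r=p$. Since the subsequence was arbitrary, $\{x_n\}$ is $\Delta$-convergent to $p$.

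The only delicate step is the uniqueness argument. It relies on the existence of the full limits $\lim_n d(u,x_n)$ for $u\in\omega_{\Delta}\bigl(\{x_n\}\bigr)$, which lets the strict inequalities obtained along two different subsequences be compared with each other. Apart from that, only Theorem~\ref{thm:Kirk--Panyanak} is used.
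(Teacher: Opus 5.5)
You correctly show that $\omega_{\Delta}\bigl(\{x_n\}\bigr)$ is nonempty and a singleton. The third step, however, has a genuine gap.

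There you take $r$ with $\AC\bigl(\{x_{n_i}\}\bigr)=\{r\}$. You then claim that Theorem~\ref{thm:Kirk--Panyanak} gives a further subsequence that is $\Delta$-convergent to this same $r$. The theorem is worded that way in the paper, but in that strong form it is false. Kirk and Panyanak only give \emph{some} $\Delta$-convergent subsequence, and its limit need not be the asymptotic center of the sequence you started from.

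Here is a counterexample in $\ell^2$. Take an orthonormal basis $\{e_n\}$, fix $a\neq 0$, and put $x_{2n}=e_{2n}$ and $x_{2n-1}=a+e_{2n-1}$. Then $\limsup_n\norm{y-x_n}^2=\max\{\norm{y}^2,\norm{y-a}^2\}+1$, so $\AC\bigl(\{x_n\}\bigr)=\{a/2\}$. But every subsequence has a further subsequence consisting only of even terms or only of odd terms, and its asymptotic center is $\{0\}$ or $\{a\}$. So no subsequence is $\Delta$-convergent to $a/2$. Elsewhere the paper only uses the weak form, as in Lemma~\ref{lem:Delta-bdd}. This also means your closing remark is wrong: the convergence hypothesis is needed in the last step too, not only for uniqueness.

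The paper gives no proof of this lemma; it quotes it from Kimura and Kohsaka. The standard repair is to use the hypothesis once more. Let $\{x_{n_i}\}$ be any subsequence with $\AC\bigl(\{x_{n_i}\}\bigr)=\{r\}$. Let $\{x_{n_{i_j}}\}$ be a further subsequence that is $\Delta$-convergent to some $s$. Then $s\in\omega_{\Delta}\bigl(\{x_n\}\bigr)=\{p\}$, so $s=p$. Suppose $r\neq p$. Since $\lim_n d(p,x_n)$ exists and $\AC\bigl(\{x_{n_{i_j}}\}\bigr)=\{p\}$, we get
\begin{align*}
 \limsup_{i} d(p,x_{n_i}) = \lim_{n} d(p,x_n) = \limsup_{j} d(p,x_{n_{i_j}})
 < \limsup_{j} d(r,x_{n_{i_j}}) \leq \limsup_{i} d(r,x_{n_i}),
\end{align*}
which contradicts $r\in\AC\bigl(\{x_{n_i}\}\bigr)$. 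Hence $r=p$ for every subsequence, and $\{x_n\}$ is $\Delta$-convergent to $p$.

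Applying the same chain of inequalities to the whole sequence shows directly that every point of $\omega_{\Delta}\bigl(\{x_n\}\bigr)$ equals the asymptotic center of $\{x_n\}$. So your separate uniqueness step, while correct, becomes unnecessary.
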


A subset $C$ of a $\CAT(0)$ space $X$ is said to be: 
\begin{itemize}
 \item convex if $[x,y]\subset C$ whenever $x,y\in C$; 
 \item $\Delta$-closed if $p\in C$ whenever 
$\{x_{\alpha}\}$ is a net in $C$ which is $\Delta$-convergent to $p\in X$.  
\end{itemize}
We know that if $X$ is an Hadamard space and 
$C$ is a closed and convex subset of $X$, then 
$C$ is $\Delta$-closed. In fact, if $\{x_{\alpha}\}$ is a 
net in $C$ which is $\Delta$-convergent to $\bar{x}\in X$, 
then $d\bigl(P_C(\bar{x}), x_{\alpha}\bigr)
 \leq d\bigl(\bar{x}, x_{\alpha}\bigr)$ for each $\alpha$ 
and hence 
\begin{align*}
 \limsup_{\alpha} d\bigl(P_C(\bar{x}), x_{\alpha}\bigr)
 \leq \limsup_{\alpha}d\bigl(\bar{x}, x_{\alpha}\bigr). 
\end{align*}
Since $\AC\bigl(\{x_{\alpha}\}\bigr)=\{\bar{x}\}$, we have 
$\bar{x}=P_C(\bar{x})\in C$. Thus $C$ is $\Delta$-closed. 

Let $(X,d)$ be a $\CAT(0)$ space, 
$f\colon X\to (-\infty, \infty]$ a function, and 
$C_{\lambda}$ the set defined by 
$C_{\lambda}=\{x\in X: f(x)\leq \lambda\}$ 
for all $\lambda\in \R$. Then $f$ is said to be: 
\begin{itemize}
 \item proper if $f(x)\in \R$ for some $x\in X$; 
 \item convex if 
 \begin{align}
 f\bigl((1-\alpha)x\oplus \alpha y\bigr) 
 \leq (1-\alpha)f(x) + \alpha f(y) 
 \end{align}
 for all $x,y\in X$ and $\alpha \in (0,1)$; 
 \item quasi-convex if $C_{\lambda}$ is convex 
for all $\lambda \in \R$, or equivalently,  
\begin{align}
 f\bigl((1-\alpha)x\oplus \alpha y\bigr) 
 \leq \max\{f(x), f(y)\}
\end{align}
 for all $x,y\in X$ and $\alpha \in (0,1)$; 
 \item lower semicontinuous if $C_{\lambda}$ 
 is closed in $X$ for all $\lambda \in \R$; 
 \item $\Delta$-lower semicontinuous if 
 $C_{\lambda}$ is $\Delta$-closed in $X$ for all $\lambda \in \R$. 
\end{itemize}
It follows from~\eqref{eq:convexity-d} 
and~\eqref{eq:CN-inequality} that 
if $X$ is a $\CAT(0)$ space, then 
$d(\,\cdot\,,z)$ and $d(\,\cdot\,, z)^2$ 
are continuous and convex real functions on $X$ for all $z\in X$. 
A function $g\colon X\to [-\infty, \infty)$ is also said to be: 
proper if $-g$ is proper; concave if $-g$ is convex; 
quasi-concave if $-g$ is quasi-convex; 
upper semicontinuous if $-g$ is lower semicontinuous; 
$\Delta$-upper semicontinuous if 
 $-g$ is $\Delta$-lower semicontinuous. 

If $X$ be an Hadamard space 
and $f\colon X\to (-\infty, \infty]$ is 
 proper, lower semicontinuous, and quasi-convex, 
 then $f$ is $\Delta$-lower semicontinuous. 
Similarly, if $g\colon X\to [-\infty, \infty)$ is 
 proper, upper semicontinuous, and quasi-concave, 
 then $g$ is $\Delta$-upper semicontinuous. 

The following theorems are important: 

\begin{theorem}[{\cite[Lemma~2.2.13]{MR3241330}}]\label{thm:Bacak-umbrella}
 Let $(X,d)$ be an Hadamard space, 
 $f\colon X\to (-\infty, \infty]$ a proper lower semicontinuous 
 convex function, and $p\in X$. Then 
 there exists a positive real number $C$ such that 
 \begin{align}
  f(x) \geq -C \bigl(d(x, p) + 1\bigr)
 \end{align}
 for all $x\in X$. 
\end{theorem}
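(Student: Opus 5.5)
The plan is to reduce to the linear lower bound of a convex function that is minorized by an affine-like function along geodesics, using a level-set separation argument in the Hadamard space.

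\textbf{Step 1: a point below $f$ at $p$ that the epigraph avoids.} Since $f$ is proper, I would first fix $q\in X$ with $f(q)\in\R$. By lower semicontinuity at $p$ (or, if $f(p)=\infty$, by the closedness of the sublevel set), choose $r>0$ and $m\in\R$ with $f(x)>m$ for all $x$ with $d(x,p)\leq r$. It then suffices to show that $f(x)\geq -C(d(x,p)+1)$ for $x$ with $d(x,p)>r$.

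\textbf{Step 2: bound $f$ far from $p$ by convexity along a segment.} Take $x$ with $d(x,p)>r$ and $f(x)<\infty$. If $f(p)<\infty$, set $\alpha=r/d(x,p)\in(0,1)$ and $z=(1-\alpha)p\oplus\alpha x$. Then $d(z,p)=r$, so convexity gives
\begin{align*}
 m< f(z)\leq (1-\alpha)f(p)+\alpha f(x).
\end{align*}
Rearranging,
\begin{align*}
 f(x)\geq f(p)-\frac{d(x,p)}{r}\bigl(f(p)-m\bigr),
\end{align*}
which is of the form $-C(d(x,p)+1)$. If $f(p)=\infty$, I would run the same argument with $q$ in place of $p$ to get a ball around $q$. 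This needs a neighbourhood of $q$ on which $f$ is bounded below, which lower semicontinuity provides. I would then use $d(x,q)\leq d(x,p)+d(p,q)$ to convert back to $p$.

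\textbf{Obstacle.} The estimate above relies on a uniform lower bound $m$ on a ball, and lower semicontinuity only yields one on a small neighbourhood: there is $r>0$ with $f>f(q)-1$ on the ball $B(q,r)$. That is exactly what Step 2 requires, so the argument closes with
\begin{align*}
 C=\max\left\{\abs{f(q)}+\frac{d(p,q)+r}{r},\ \frac{1}{r}\right\}+\abs{f(q)}+1.
\end{align*}
Inside the ball the bound $f>f(q)-1$ holds directly. The main care needed is in the bookkeeping of the constants, and in checking that the points with $f(x)=\infty$ are trivial.
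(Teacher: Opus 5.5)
Your argument is correct. Note that the paper gives no proof of this statement; it is imported from Ba\v{c}\'ak's book.

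The argument usually given for it in the Hadamard-space setting is global and compactness-based. One first shows that $f$ is bounded below on bounded sets. Otherwise a bounded sequence with $f(x_n)\to-\infty$ has a $\Delta$-convergent subsequence by Theorem~\ref{thm:Kirk--Panyanak}, and a proper lower semicontinuous convex function is $\Delta$-lower semicontinuous, which gives a contradiction. Only then is convexity along geodesics used to upgrade this to a linear bound.

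Your route skips the compactness step entirely. Lower semicontinuity at a single point $q$ with $f(q)\in\R$ gives $f>f(q)-1$ near $q$. The rescaling $z=(1-\alpha)q\oplus\alpha x$ with $\alpha=r/d(x,q)$ then transports this local bound to $f(x)>f(q)-d(x,q)/r$ at every far point. This uses neither completeness nor the $\CAT(0)$ comparison inequality, only geodesics and convexity along them. It also yields boundedness below on bounded sets as a consequence rather than as an input.

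Two pieces of bookkeeping should be tidied:
\begin{itemize}
\item Lower semicontinuity gives the bound on an open ball $B(q,r)$, whereas your point $z$ satisfies $d(z,q)=r$ exactly. Shrink the radius, for instance by using the closed ball of radius $r/2$.
\item Step~1 at $p$ is superfluous. Working at $q$ from the start covers both cases $f(p)<\infty$ and $f(p)=\infty$ at once. It gives $f(x)\geq f(q)-1-d(x,q)/r$ for every $x\in X$, and your constant $C$ then follows from $d(x,q)\leq d(x,p)+d(p,q)$.
\end{itemize}
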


\begin{theorem}[\cite{MR3574140}]\label{thm:Kimura-K-bdd}
 Let $(X,d)$ be an Hadamard space, 
 $\{x_n\}$ a bounded sequence in $X$, 
 $\{\beta_n\}$ a sequence of positive real numbers such that 
 $\sum_{n=1}^{\infty}\beta_n =\infty$, 
 and $g\colon X\to \R$ the function defined by 
 \begin{align}
  g(y)=\limsup_{n\to \infty}\frac{1}{\sum_{k=1}^n\beta_k}
  \sum_{l=1}^n \beta_l d(y,x_l)^2
 \end{align}
 for all $y\in X$. Then $g$ is a continuous and convex function such that 
 $\Argmin_X g$ is a singleton. 
\end{theorem}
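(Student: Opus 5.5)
Set $s_n=\sum_{k=1}^n\beta_k$ and
\begin{align*}
 g_n(y)=\frac{1}{s_n}\sum_{l=1}^n\beta_l d(y,x_l)^2
\end{align*}
for $y\in X$ and $n\in\N$, so that $g=\limsup_n g_n$. Fix $p\in X$ and $M>0$ with $d(p,x_n)\leq M$ for all $n$. The plan has three steps: show that $g$ is real valued and continuous, then that $g$ is convex (in fact uniformly convex), and finally that a minimizer exists and is unique, using completeness.

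\emph{Finiteness and continuity.} Since each $g_n$ is a weighted average of the numbers $d(y,x_l)^2$, we have
\begin{align*}
 0\leq g_n(y)\leq \bigl(d(y,p)+M\bigr)^2,
\end{align*}
so $g$ is real valued and nonnegative. For $y,z\in X$ we have
\begin{align*}
 \bigl\lvert d(y,x_l)^2-d(z,x_l)^2\bigr\rvert\leq d(y,z)\bigl(d(y,x_l)+d(z,x_l)\bigr)\leq d(y,z)\bigl(d(y,p)+d(z,p)+2M\bigr).
\end{align*}
Averaging gives the same bound for $\lvert g_n(y)-g_n(z)\rvert$, uniformly in $n$. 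Passing to the upper limit, $\lvert g(y)-g(z)\rvert$ obeys the same bound. Hence $g$ is locally Lipschitz, and in particular continuous.

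\emph{Convexity.} Fix $y,z\in X$ and $\alpha\in[0,1]$, and let $w=(1-\alpha)y\oplus\alpha z$. Apply \eqref{eq:CN-inequality} with each $x_l$ in place of the third point and average with the weights $\beta_l/s_n$. This gives
\begin{align*}
 g_n(w)\leq(1-\alpha)g_n(y)+\alpha g_n(z)-\alpha(1-\alpha)d(y,z)^2 .
\end{align*}
Taking $\limsup_n$ and using subadditivity of $\limsup$ (all quantities are bounded), we obtain
\begin{align*}
 g(w)\leq(1-\alpha)g(y)+\alpha g(z)-\alpha(1-\alpha)d(y,z)^2 .
\end{align*}
Hence $g$ is convex, and moreover uniformly convex in this quantitative sense. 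This is the key step; the only point needing care is that no divergence assumption is used here. The hypothesis $\sum_n\beta_n=\infty$ is not needed for these estimates; it merely makes $g$ a genuine Ces\`aro-type limit.

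\emph{Existence and uniqueness of the minimizer.} Let $m=\inf_X g\geq 0$ and take a minimizing sequence $\{y_k\}$. Apply the inequality above with $\alpha=1/2$ and let $w_{jk}$ be the midpoint of $y_j$ and $y_k$. Since $g(w_{jk})\geq m$, we get
\begin{align*}
 \frac14 d(y_j,y_k)^2\leq\frac12\bigl(g(y_j)+g(y_k)\bigr)-m\to 0 .
\end{align*}
So $\{y_k\}$ is Cauchy, and completeness of $X$ yields a limit $u$. Continuity of $g$ gives $g(u)=m$, so $\Argmin_X g$ is nonempty. If $u,u'$ are both minimizers, the same midpoint inequality gives $\frac14 d(u,u')^2\leq m-m=0$, so $u=u'$. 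Therefore $\Argmin_X g$ is a singleton.
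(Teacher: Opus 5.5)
Your proof is correct and complete. The paper gives no proof of this statement: it is quoted from Kimura--Kohsaka~\cite{MR3574140} and used as a black box in the proof of Theorem~\ref{thm:PPA-Saddle}, so there is no internal argument to compare against.

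Your route is fully elementary. Averaging \eqref{eq:CN-inequality} over the weights $\beta_l/s_n$ and using subadditivity of $\limsup$ yields the quantitative strong convexity $g(w)\leq(1-\alpha)g(y)+\alpha g(z)-\alpha(1-\alpha)d(y,z)^2$. From this one inequality you get both existence (a minimizing sequence is Cauchy, then completeness and continuity) and uniqueness.

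An alternative route, closer to the machinery the paper sets up in Section~\ref{sec:preliminaries}, would work as follows:
\begin{itemize}
\item observe the coercivity estimate $g(y)\geq\bigl(d(y,p)-M\bigr)^2$ for $d(y,p)\geq M$, which confines any minimizer to a closed ball;
\item get existence from Lemma~\ref{lem:Delta-bdd}, since on that ball, a bounded Hadamard space, the continuous convex $g$ is $\Delta$-lower semicontinuous (this ultimately rests on Theorem~\ref{thm:Kirk--Panyanak});
\item use strict convexity only for uniqueness.
\end{itemize}
Your argument avoids the $\Delta$-convergence machinery entirely and needs only the CN inequality and completeness.

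You are also right that $\sum_{n}\beta_n=\infty$ plays no role in this statement. In the paper it matters only later, in the proof of Theorem~\ref{thm:PPA-Saddle}, where it makes the boundary term $\mu\,d\bigl((x_1,y_1),R_{\mu}(p,q)\bigr)^2/\sum_{k=1}^n\lambda_k$ produced by the telescoping sum vanish.
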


We know the following lemma: 
\begin{lemma}
Let $X$ be a $\CAT(0)$ space and $f\colon X\to (-\infty, \infty]$ 
a function. 
Then $f$ is $\Delta$-lower semicontinuous if and only if 
\begin{align}
 f(u)\leq \liminf_{\alpha}f(x_{\alpha})
\end{align}
whenever 
$\{x_{\alpha}\}$ is a net in $X$ which is $\Delta$-convergent 
to $u\in X$.  
Similarly, $g\colon X\to [-\infty, \infty)$ 
is $\Delta$-upper semicontinuous if and only if 
\begin{align}
 g(u)\geq \limsup_{\alpha}g(x_{\alpha})
\end{align}
whenever $\{x_{\alpha}\}$ is a net in $X$ which is $\Delta$-convergent 
to $u\in X$. 
\end{lemma}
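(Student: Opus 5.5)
This is the standard equivalence between closedness of sublevel sets and the sequential-type (net) liminf characterization. The argument uses only the definitions of $\Delta$-closedness and $\Delta$-lower semicontinuity. The $g$ part will follow by applying the $f$ part to $-g$: $\Delta$-upper semicontinuity of $g$ is by definition $\Delta$-lower semicontinuity of $-g$, and $\liminf_{\alpha}(-g(x_{\alpha}))=-\limsup_{\alpha}g(x_{\alpha})$. So it suffices to treat $f$.

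For the \emph{if} part, assume the liminf inequality holds along every $\Delta$-convergent net. Fix $\lambda\in\R$ and a net $\{x_{\alpha}\}$ in $C_{\lambda}$ that is $\Delta$-convergent to $u\in X$. Then $f(x_{\alpha})\leq\lambda$ for all $\alpha$, so
\[
f(u)\leq\liminf_{\alpha}f(x_{\alpha})\leq\lambda .
\]
Hence $u\in C_{\lambda}$, so $C_{\lambda}$ is $\Delta$-closed.

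For the \emph{only if} part, assume each $C_{\lambda}$ is $\Delta$-closed. Let $\{x_{\alpha}\}_{\alpha\in A}$ be $\Delta$-convergent to $u$, and suppose for contradiction that $\liminf_{\alpha}f(x_{\alpha})<f(u)$. Choose a real $\lambda$ with $\liminf_{\alpha}f(x_{\alpha})<\lambda<f(u)$; this is possible also when the liminf is $-\infty$ or $f(u)=\infty$. By the definition of the lower limit of a net, $\inf_{\gamma\geq\alpha}f(x_{\gamma})<\lambda$ for every $\alpha$. So the set $B=\{\gamma\in A: f(x_{\gamma})<\lambda\}$ is cofinal in $A$.

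The one point needing care is turning $B$ into a subnet lying in $C_{\lambda}$. Since $B$ is cofinal, $B$ with the induced order is directed: for $\beta_1,\beta_2\in B$, take $\alpha\in A$ above both, then some $\beta\in B$ with $\beta\geq\alpha$. The inclusion $B\to A$ is monotone and cofinal, so $\{x_{\beta}\}_{\beta\in B}$ is a subnet of $\{x_{\alpha}\}$ lying in $C_{\lambda}$.

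It remains to check that this subnet is still $\Delta$-convergent to $u$. Every subnet of $\{x_{\beta}\}_{\beta\in B}$ is again a subnet of $\{x_{\alpha}\}$, since a composition of subnet maps is a subnet map. By the definition of $\Delta$-convergence of a net, every such subnet therefore has asymptotic center $\{u\}$. So $\{x_{\beta}\}_{\beta\in B}$ is $\Delta$-convergent to $u$. By $\Delta$-closedness of $C_{\lambda}$ we get $u\in C_{\lambda}$, that is, $f(u)\leq\lambda<f(u)$, a contradiction.
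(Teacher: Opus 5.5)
Your proof is correct and is essentially the paper's argument. In the only-if direction, both you and the paper extract a subnet lying in a sublevel set $\{f\le\lambda\}$ with $\lambda<f(u)$ and use $\Delta$-closedness to force $u$ into it; the paper phrases this directly (for each real $L<f(u)$ the net is eventually outside $\{f\le L\}$), while you argue by contradiction, and the if direction is identical. Restricting to the cofinal index set $B$ (a subnet even in the monotone-map sense) and observing that subnets of subnets are subnets, so the restricted net stays $\Delta$-convergent to $u$, cleanly justifies a step the paper asserts without comment.
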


For the sake of completeness, we give the proof of the first assertion. 
\begin{proof}

Suppose that $f\colon X\to (-\infty, \infty]$ 
is $\Delta$-lower semicontinuous 
and $\{x_{\alpha}\}_{\alpha\in A}$ a net in $X$ which is $\Delta$-convergent 
to $u\in X$. Let $L$ be any real number such that 
$L < f(u)$. Set $C=\{x\in X: f(x)\leq L\}$. 
We claim that there exists 
$\alpha_0\in A$ such that $x_{\alpha}\in X\setminus C$ for all $\alpha \geq \alpha_0$. 
If not, for each $\alpha\in A$, there exists 
$\beta_{\alpha}\in A$ such that $\beta_{\alpha}\geq \alpha$ 
and $x_{\beta_{\alpha}}\in C$. Then $\{x_{\beta_{\alpha}}\}_{\alpha\in A}$ 
is a subnet of $\{x_{\alpha}\}$. Since $\{x_{\alpha}\}$ is $\Delta$-convergent to $u$, 
we know that $\{x_{\beta_{\alpha}}\}_{\alpha\in A}$ is $\Delta$-convergent to $u$. 
Since $C$ is $\Delta$-closed, we have $u\in C$. This is a contradiction. 
Hence we have $\alpha_0\in A$ such that 
$L < f(x_{\alpha})$ for all $\alpha \geq \alpha_0$. This implies that 
\begin{align}
 L \leq \inf_{\alpha\geq \alpha_0} f(x_{\alpha}) 
 \leq \liminf_{\alpha}f(x_{\alpha}). 
\end{align}
Letting $L \to f(u)$, we obtain $f(u)\leq \liminf_{\alpha}f(x_{\alpha})$. 
Conversely, suppose that 
\begin{align}
 f(u)\leq \liminf_{\alpha}f(x_{\alpha})
\end{align}
whenever $\{x_{\alpha}\}_{\alpha \in A}$ is a net in $X$ which is $\Delta$-convergent 
to $u\in X$ and let $\lambda\in \R$ be given. We prove that 
the set $C$ given by $C=\{x\in X: f(x)\leq \lambda\}$ is $\Delta$-closed. 
Let $\{x_{\alpha}\}$ be a net in $C$ which is $\Delta$-convergent to $u\in X$. 
Then we have $f(x_{\alpha})\leq \lambda$ for all $\alpha\in A$. 
Taking the lower limit, we obtain 
\begin{align}
 f(u)\leq \liminf_{\alpha}f(x_{\alpha}) \leq \lambda. 
\end{align}
Thus $u\in C$ and $f$ is $\Delta$-lower semicontinuous.  
\end{proof}

We also know the following lemma: 

\begin{lemma}\label{lem:Delta-bdd}
 Let $X$ be a bounded Hadamard space. Then the following hold: 
 \begin{enumerate}
  \item If $f\colon X\to (-\infty, \infty]$ is $\Delta$-lower semicontinuous, then 
 the set $\Argmin_X f$ is nonempty; 
  \item if $g\colon X\to [-\infty, \infty)$ is $\Delta$-upper semicontinuous, then 
 the set $\Argmax_X g$ is nonempty. 
 \end{enumerate}
\end{lemma}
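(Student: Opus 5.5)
We only need to prove part (1); part (2) then follows by applying (1) to $f=-g$, because $g$ is $\Delta$-upper semicontinuous exactly when $-g$ is $\Delta$-lower semicontinuous, and $\Argmax_X g=\Argmin_X(-g)$. The method is the direct method of the calculus of variations. Compactness is replaced by the sequential $\Delta$-compactness of bounded sequences (Theorem~\ref{thm:Kirk--Panyanak}). Lower semicontinuity is replaced by the net characterization of $\Delta$-lower semicontinuity in the preceding lemma, which holds in particular for sequences. By part~(iii) of Lemma~\ref{lem:Delta-net}, $\Delta$-convergence of a sequence agrees with its $\Delta$-convergence as a net.

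If $f\equiv\infty$, then every point of $X$ is a minimizer, so the claim is trivial. We may also assume $X\neq\emptyset$, as is implicit in the statement. Set $m=\inf f(X)\in[-\infty,\infty)$.

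First, choose a minimizing sequence $\{x_n\}$ in $X$ with $f(x_n)\to m$. If $m=-\infty$, take $f(x_n)<-n$; otherwise take $f(x_n)<m+1/n$.

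Second, since $X$ is bounded, $\{x_n\}$ is bounded. Theorem~\ref{thm:Kirk--Panyanak} then gives a subsequence $\{x_{n_i}\}$ that is $\Delta$-convergent to some $p\in X$. Completeness of $X$ is used exactly here: the asymptotic center lies in $X$.

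Third, Lemma~\ref{lem:Delta-net}(iii) lets us view $\{x_{n_i}\}$ as a net $\Delta$-convergent to $p$. The characterization lemma then yields
\begin{align*}
 f(p)\leq \liminf_{i\to\infty} f(x_{n_i}) = m .
\end{align*}
Since $f(p)\in(-\infty,\infty]$, this inequality forces $m>-\infty$. Combined with $m\leq f(p)$, it gives $f(p)=m$, so $p\in\Argmin_X f$.

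The only delicate point is this passage from sequences to nets. $\Delta$-lower semicontinuity was defined through $\Delta$-closedness with respect to nets, and we are applying it to a subsequence. Lemma~\ref{lem:Delta-net}(iii) is exactly what makes this legitimate, so no genuine obstacle remains. Alternatively, one can avoid the characterization lemma and argue directly with the sublevel sets $C_\lambda$ for $\lambda>m$: each is $\Delta$-closed and contains a tail of $\{x_{n_i}\}$, so $p\in C_\lambda$ for every $\lambda>m$. Hence $f(p)\leq m$.
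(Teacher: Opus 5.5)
Your proposal is correct and follows the paper's own argument: take a minimizing sequence, extract a $\Delta$-convergent subsequence via Theorem~\ref{thm:Kirk--Panyanak}, and conclude from $\Delta$-lower semicontinuity that the limit point is a minimizer, treating $f\equiv\infty$ separately and getting part~(2) by symmetry. Your explicit appeal to Lemma~\ref{lem:Delta-net}(iii), to pass from sequential to net $\Delta$-convergence, only makes precise a step the paper leaves implicit.
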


For the sake of completeness, we give the proof. 

\begin{proof}
 Suppose that $f\colon X\to (-\infty, \infty]$ is $\Delta$-lower semicontinuous. 
 If $f$ is not proper, then $f(x)=\infty$ for all $x\in X$ and hence 
 $\Argmin_X f=X$. Suppose that $f$ is proper and 
 set $L=\inf f(X)$. We know that $L\in [-\infty, \infty)$. 
 Then there exists a sequence $\{x_n\}$ in $X$ such that 
 $f(x_n)\to L$. Since $X$ is bounded, the sequence $\{x_n\}$ 
 is bounded. Theorem~\ref{thm:Kirk--Panyanak} ensures that 
 there exists a subsequence $\{x_{n_i}\}$ of $\{x_n\}$ 
 which is $\Delta$-convergent to some point $u\in X$. 
 Since $f$ is $\Delta$-lower semicontinuous, we obtain 
 \begin{align}
  f(u)\leq \liminf_{i\to \infty}f(x_{n_i}) 
 =\lim_{n\to \infty}f(x_n) = L. 
 \end{align}
 Hence $f(u)=\inf f(X)$. Therefore $\Argmin_X f$ is nonempty. 
\end{proof}

Kimura and Kishi~\cite{MR3897196} obtained the following 
fundamental lemma: 

\begin{lemma}[{\cite[Lemma~3.3]{MR3897196}}]\label{lem:Delta-compact}
 Let $X$ be a bounded Hadamard space and  
 $\{C_{\lambda}\}_{\lambda\in \Lambda}$ a family of 
 $\Delta$-closed subsets of $X$ 
 such that 
 \begin{align}
  \bigcap_{k=1}^m C_{\lambda_k}\neq \emptyset
 \end{align}
 whenever $m\in \N$ and $\lambda_1,\lambda_2,\dots ,\lambda_m\in \Lambda$. 
 Then $\bigcap_{\lambda\in \Lambda}C_{\lambda}$ is nonempty. 
\end{lemma}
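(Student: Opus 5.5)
We argue by contradiction, using that a bounded Hadamard space is $\Delta$-compact in the net sense. Suppose $\bigcap_{\lambda\in\Lambda}C_\lambda=\emptyset$. Let $\mathcal{D}$ be the family of all finite nonempty subsets of $\Lambda$, directed by inclusion. For each $D\in\mathcal{D}$, pick $x_D\in\bigcap_{\lambda\in D}C_\lambda$; this intersection is nonempty by hypothesis. This gives a net $\{x_D\}_{D\in\mathcal{D}}$ in $X$, which is bounded because $X$ is. The plan is to extract a subnet $\{x_{D_\beta}\}$ that is $\Delta$-convergent to some $p\in X$. Then for each fixed $\lambda$, the subnet is eventually in $C_\lambda$: cofinally $D_\beta\supseteq\{\lambda\}$, and in fact eventually, since subnets are cofinal. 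Hence a further tail of it is a net in $C_\lambda$. A tail of a $\Delta$-convergent net is again $\Delta$-convergent to the same point, because subnets of a tail are subnets of the original net. Since $C_\lambda$ is $\Delta$-closed, $p\in C_\lambda$ for every $\lambda$, which contradicts the assumption.

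The main obstacle is the existence of a $\Delta$-convergent subnet of a bounded net. Theorem~\ref{thm:Kirk--Panyanak} is only stated for sequences, so I would prove the net version directly. First I show that every bounded net $\{x_\alpha\}$ has a unique asymptotic center. The function $\varphi(y)=\limsup_\alpha d(y,x_\alpha)^2$ is finite, convex and continuous (indeed $1$-Lipschitz in $d(y,\cdot)$ after taking square roots, via Lemma~\ref{lem:f-net}). It is also uniformly convex in the sense that \eqref{eq:CN-inequality} gives
\begin{align*}
 \varphi\bigl(\tfrac12 y\oplus\tfrac12 z\bigr)\leq \tfrac12\varphi(y)+\tfrac12\varphi(z)-\tfrac14 d(y,z)^2 .
\end{align*}
Hence a minimizing sequence of $\varphi$ is Cauchy, and completeness of $X$ gives a unique minimizer.

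Next I need a subnet whose asymptotic center is stable under passing to further subnets. For this I would use a universal (ultra)subnet $\{x_{\alpha_\beta}\}$. For a universal net, every bounded real function of the net converges, so $\limsup$ is a limit and $\psi(y)=\lim_\beta d(y,x_{\alpha_\beta})^2$ is unchanged along any further subnet. Consequently every subnet has the same asymptotic center $\{p\}$, namely the unique minimizer of $\psi$. This is exactly $\Delta$-convergence to $p$ as defined for nets.

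Finally I assemble these pieces as in the first paragraph. The only care needed is the eventuality claim: for a subnet $D_\beta$ there is $\beta_0$ with $D_\beta\supseteq\{\lambda\}$ for all $\beta\geq\beta_0$. This holds by the definition of a subnet, which requires that for each $D$ there is $\beta_0$ with $D_\beta\geq D$ for all $\beta\geq\beta_0$.
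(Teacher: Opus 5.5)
Your proof is correct. The paper does not prove this lemma itself. It is quoted from Kimura and Kishi \cite[Lemma~3.3]{MR3897196} and used only as a black box in the proof of Theorem~\ref{thm:Sion-CAT0}. So what you have written is a self-contained substitute for a citation, not a variant of an argument in the text.

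Your reduction is the standard net characterization of compactness. Along any subnet, the net $\{x_D\}$ indexed by finite subsets of $\Lambda$ lies eventually in each $C_\lambda$. Everything therefore hinges on showing that every net in a bounded Hadamard space has a $\Delta$-convergent subnet, and both ingredients you use for this are sound:
\begin{itemize}
 \item Along a universal subnet $\{x_{\alpha_\beta}\}$, the limit $\psi(y)=\lim_\beta d(y,x_{\alpha_\beta})^2$ exists for every $y$, since it is a universal net in the compact interval $[0,(\mathrm{diam}\,X)^2]$. This limit is inherited by every further subnet, so all subnets share the same asymptotic center.
 \item Apply \eqref{eq:CN-inequality} with weight $\frac12$, endpoints $u,w$ and $z=x_{\alpha_\beta}$, then pass to the limit in $\beta$. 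This gives $\psi(\frac12 u\oplus\frac12 w)\leq\frac12\psi(u)+\frac12\psi(w)-\frac14 d(u,w)^2$. Hence minimizing sequences are Cauchy, and completeness together with continuity of $\psi$ (note $\sqrt{\psi}$ is $1$-Lipschitz) yields a unique minimizer $p$. Because $\limsup d(y,\cdot)=\sqrt{\psi(y)}$ along every further subnet, every subnet has asymptotic center $\{p\}$, which is exactly $\Delta$-convergence in the paper's net sense.
\end{itemize}

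The cost of your route is an appeal to Kelley's theorem on universal subnets. Equivalently, one can apply Tychonoff's theorem to $x\mapsto (d(y,x))_{y\in X}$ in $[0,\mathrm{diam}\,X]^X$. Some such choice principle is natural here, because $\Lambda$ is arbitrary and Theorem~\ref{thm:Kirk--Panyanak} only handles sequences. The gain is a stronger by-product: bounded Hadamard spaces are $\Delta$-compact in the net sense, and the lemma follows from this in a few lines.

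One minor point: your contradiction hypothesis is never used. The argument directly exhibits a point $p\in\bigcap_{\lambda\in\Lambda}C_\lambda$.
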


Let $(X,d_X)$ and $(Y,d_Y)$ be $\CAT(0)$ spaces. 
Then the $\ell^2$-metric $d$ on $X\times Y$ is defined by 
\begin{align}
 d\bigl((x,y),(x',y')\bigr) = 
 \sqrt{d_X(x,x')^2 + d_Y(y,y')^2} 
\end{align}
for all $(x,y)$ and $(x',y')$ in $X\times Y$. 
If $u=(x,y)$ and $v=(x',y')$ are points in $X\times Y$, 
then a normalized geodesic $\gamma$ from $u$ to $v$ 
is given by 
\begin{align}\label{eq:geodesic-product}
 \gamma(t) = \bigl((1-t)x\oplus t x', (1-t)y \oplus t y'\bigr)
\end{align}
for all $t\in [0,1]$. 
It is clear that $\gamma(0)=u$ and $\gamma(1)=v$. 
Further, if $s, t\in [0,1]$, then 
\begin{align}
 \begin{split}
 d\bigl(\gamma(s), \gamma(t)\bigr) 
 &=\sqrt{d_X(x,x')^2(s-t)^2+d_Y(y,y')^2(s-t)^2} 
 =d(u,v) \abs{s-t}.   
 \end{split}
\end{align}
Thus $X\times Y$ is a geodesic space. 
If $u_i=(x_i,y_i) \in X\times Y$ for all $i=1,2,3,4$, 
then Theorem~\ref{thm:CS-inequality} and 
the Cauchy--Shwarz inequality on $\R^2$ imply that 
\begin{align}
 \begin{split}
 &d(u_1,u_4)^2 + d(u_2,u_3)^2 -d(u_1,u_3)^2 -d(u_2,u_4)^2 \\
 &= d_X(x_1,x_4)^2 + d_X(x_2,x_3)^2 -d_X(x_1,x_3)^2 -d_X(x_2,x_4)^2 \\
 &\quad + d_Y(y_1,y_4)^2 + d_Y(y_2,y_3)^2 -d_Y(y_1,y_3)^2 -d_Y(y_2,y_4)^2 \\
 &\leq 2\,d_X(x_1,x_2)d_X(x_3,x_4) + 2\,d_Y(y_1,y_2)d_Y(y_3,y_4) \\
 &\leq 2\,d(u_1,u_2)d(u_3,u_4).   
 \end{split}
\end{align} 
Thus it also follows from Theorem~\ref{thm:CS-inequality} that 
$X\times Y$ is a $\CAT(0)$ space. 

\section{Sion's minimax theorem in Hadamard spaces}\label{sec:Sion}

In this section, using the methods by Komiya~\cite{MR0930413}, we give the proof of 
Sion's minimax theorem in Hadamard spaces. 

\begin{theorem}\label{thm:Sion-CAT0}
 Let $(X,d_X)$ an Hadamard space, $(Y,d_Y)$ an bounded Hadamard space, and 
 $f\colon X\times Y\to \R$ a function satisfying 
 \begin{enumerate}
  \item[(i)] $f(\,\cdot\,,y)$ is upper semicontinuous 
 and quasi-concave for each $y\in Y$; 
  \item[(ii)] $f(x,\cdot \,)$ is lower semicontinuous 
 and quasi-convex for each $x\in X$. 
 \end{enumerate}
 Then the equality 
 \begin{align}
  \sup_{x\in X}\min_{y\in Y} f(x,y)=\min_{y\in Y} \sup_{x\in X} f(x,y) 
 \end{align}
 holds. 
\end{theorem}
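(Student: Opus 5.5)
We follow Komiya's elementary argument, with compactness of $Y$ replaced by $\Delta$-compactness. First, the minima in the statement exist. For each $x\in X$, the function $f(x,\cdot\,)$ is real valued, lower semicontinuous and quasi-convex on the Hadamard space $Y$, hence $\Delta$-lower semicontinuous. Since $Y$ is bounded, Lemma~\ref{lem:Delta-bdd} shows that $\min_{y\in Y}f(x,y)$ is attained.

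Next, $y\mapsto\sup_{x\in X}f(x,y)$ is a supremum of $\Delta$-lower semicontinuous functions, so its sublevel sets are intersections of $\Delta$-closed sets. It is therefore $\Delta$-lower semicontinuous with values in $(-\infty,\infty]$, and the same lemma gives a minimizer. The inequality $\sup_x\min_y f\le\min_y\sup_x f$ is trivial.

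For the reverse inequality, fix a real $\alpha<\min_{y}\sup_{x}f(x,y)$ and show that $\alpha\le \sup_x\min_y f(x,y)$. For $x\in X$ set $C_x=\{y\in Y: f(x,y)\le\alpha\}$; each $C_x$ is closed and convex, hence $\Delta$-closed. By the choice of $\alpha$, $\bigcap_{x\in X}C_x=\emptyset$. Lemma~\ref{lem:Delta-compact} then yields finitely many $x_1,\dots,x_m$ with $\bigcap_{k}C_{x_k}=\emptyset$, that is,
\begin{align*}
\min_{y\in Y}\max_{1\le k\le m}f(x_k,y)>\alpha .
\end{align*}
The minimum is attained because a finite maximum of $\Delta$-lsc functions is $\Delta$-lsc. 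It thus suffices to prove the finite statement: if $\alpha<\min_y\max_k f(x_k,y)$, then $\alpha<\min_y f(x_0,y)$ for some $x_0$ in the convex hull of $\{x_1,\dots,x_m\}$ in $X$. By induction on $m$, this reduces to Komiya's two-point lemma.

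\textbf{Two-point lemma.} If $x_1,x_2\in X$ and $\alpha<\min_y\max\{f(x_1,y),f(x_2,y)\}$, then there is $z\in[x_1,x_2]$ with $\alpha<\min_y f(z,y)$.

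Its proof is the main point, and we would transcribe Komiya's argument along the geodesic $z_t=(1-t)x_1\oplus t x_2$. Suppose the conclusion fails. Pick $\beta$ with $\alpha<\beta<\min_y\max\{f(x_1,y),f(x_2,y)\}$. For $z\in[x_1,x_2]$ set $A_z=\{y:f(z,y)\le\alpha\}$ and $B_z=\{y:f(z,y)\le\beta\}$; these are nonempty, closed and convex. Quasi-concavity of $f(\cdot,y)$ gives $f(z,y)\ge\min\{f(x_1,y),f(x_2,y)\}$, so $B_z\subset B_{x_1}\cup B_{x_2}$. Since $B_{x_1}\cap B_{x_2}=\emptyset$ and $B_z$ is convex, hence connected (geodesics are continuous), $B_z$ lies entirely in $B_{x_1}$ or entirely in $B_{x_2}$. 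Let $I=\{t: B_{z_t}\subset B_{x_1}\}$ and $J=\{t:B_{z_t}\subset B_{x_2}\}$; these partition $[0,1]$, with $0\in I$ and $1\in J$.

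To show that $I$ is open, take $t\in I$ and $y\in A_{z_t}$. Then $f(z_t,y)\le\alpha<\beta$, and upper semicontinuity of $f(\cdot,y)$ together with continuity of $t\mapsto z_t$ give $f(z_s,y)<\beta$ for $s$ near $t$. Hence $y\in B_{z_s}\cap B_{x_1}$, which forces $s\in I$. The same argument shows $J$ is open, contradicting the connectedness of $[0,1]$. This step uses only upper semicontinuity in $x$ along the geodesic and connectedness of convex sets in $Y$, both available in $\CAT(0)$ spaces.

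The induction on $m$ follows Komiya. Restrict $Y$ to the closed convex set $Y'=C_{x_m}^c$-complement replaced by $\{y: f(x_m,y)\le\alpha'\}$ for a suitable $\alpha'$; this is again a bounded Hadamard space, and its nonemptiness is handled by the case split in Komiya. Apply the $(m-1)$-point statement there, then the two-point lemma. Every ingredient uses only closed convex subsets of $Y$, which are again bounded Hadamard spaces, and geodesics in $X$. Completeness of $X$ is never used, and we expect the two-point lemma to be the only delicate step.
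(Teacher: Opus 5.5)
Your proposal is correct and follows the paper's route, which is Komiya's: existence of the minima via Lemma~\ref{lem:Delta-bdd}, reduction to finitely many points via Lemma~\ref{lem:Delta-compact}, the two-point Lemma~\ref{lem:Sion-CAT0-1}, and the induction of Lemma~\ref{lem:Sion-CAT0-2}. The differences are cosmetic: you use connectedness of convex sets where the paper runs an explicit $\sup$ argument, and you show $I,J$ open rather than closed, which is equivalent since they partition $[0,1]$. Your sentence defining $Y'$ in the induction step is garbled, but the correct choice is simply $Y'=\{y\in Y: f(x_m,y)\le\alpha\}$ with the same $\alpha$, exactly as in the paper.
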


\begin{remark}\label{rem:Sion-CAT0}
 For each $x\in X$ and $y\in Y$, the functions 
 $f(\,\cdot\,, y)$ and $f(x, \cdot\,)$ are $\Delta$-upper semicontinuous 
 and $\Delta$-lower semicontinuous, respectively. 
 Since $Y$ is a bounded Hadamard space, 
 it follows from Lemma~\ref{lem:Delta-bdd} that 
 $f(x,\,\cdot\,)$ has a minimizer. Thus $\min_{y\in Y}f(x,y)\in \R$ 
 for all $x\in X$. This implies that 
 \begin{align}
  \sup_{x\in X}\min_{y\in Y}f(x,y)\in (-\infty, \infty]. 
 \end{align}
 On the other hand, the function 
 $y\mapsto \sup_{x\in X}f(x,y)$ 
 is a $\Delta$-lower semicontinuous function of $Y$ into $(-\infty, \infty]$. 
 Then it follows from Lemma~\ref{lem:Delta-bdd} that 
 \begin{align}
  \min_{y\in Y}\sup_{x\in X} f(x,y)\in (-\infty, \infty]. 
 \end{align}
 Further, since $f(x,y)\leq \sup_{x\in X}f(x,y)$ 
 for all $x\in X$ and $y\in Y$, we have 
 \begin{align}
  \min_{y\in Y}f(x,y)\leq \min_{y\in Y}\sup_{x\in X}f(x,y)
 \end{align}
 for all $x\in X$ and hence 
 \begin{align}
  \sup_{x\in X}\min_{y\in Y}f(x,y)\leq \min_{y\in Y}\sup_{x\in X}f(x,y). 
 \end{align}
 Thus we need to prove the reverse inequality. 
\end{remark}

\begin{lemma}\label{lem:Sion-CAT0-1}
 Let $X$, $Y$, and $f$ be the same as in Theorem~\ref{thm:Sion-CAT0}. 
 Suppose that $x_1,x_2\in X$ and $\alpha \in \R$ satisfy 
 \begin{align}
  \alpha < \min_{y\in Y}\max\{f(x_1, y), f(x_2, y)\}. 
 \end{align}
 Then there exists $x_0\in X$ such that 
 $\alpha < \min_{y\in Y}f(x_0,y)$. 
\end{lemma}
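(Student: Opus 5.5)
We argue by contradiction, following Komiya's argument. Suppose that $\min_{y\in Y}f(z,y)\leq \alpha$ for every $z\in X$. Choose $\beta\in\R$ with
\begin{align*}
 \alpha<\beta<\min_{y\in Y}\max\{f(x_1,y),f(x_2,y)\};
\end{align*}
this minimum exists because $y\mapsto\max\{f(x_1,y),f(x_2,y)\}$ is lower semicontinuous and quasi-convex, hence $\Delta$-lower semicontinuous, so Lemma~\ref{lem:Delta-bdd} applies. For $z\in X$ set $C_z=\{y\in Y: f(z,y)\leq \alpha\}$ and $C'_z=\{y\in Y: f(z,y)\leq \beta\}$. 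Both sets are closed, convex, nonempty by the contradiction hypothesis, and $C_z\subset C'_z$. The choice of $\beta$ gives $C'_{x_1}\cap C'_{x_2}=\emptyset$.

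The key step is a dichotomy along the geodesic $z_t=(1-t)x_1\oplus tx_2$. Quasi-concavity of $f(\cdot,y)$ gives $f(z_t,y)\geq\min\{f(x_1,y),f(x_2,y)\}$, so $C_{z_t}\subset C'_{z_t}\subset C'_{x_1}\cup C'_{x_2}$. The set $C'_{z_t}$ is convex, hence connected (geodesics are continuous), and it is a union of the two disjoint closed sets $C'_{x_1}$ and $C'_{x_2}$. Therefore $C'_{z_t}$ lies entirely in one of them, and so does the nonempty set $C_{z_t}$. Define
\begin{align*}
 I=\{t\in[0,1]: C_{z_t}\subset C'_{x_1}\},\qquad J=\{t\in[0,1]: C_{z_t}\subset C'_{x_2}\}.
\end{align*}
These sets partition $[0,1]$, and $0\in I$, $1\in J$.

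The main step is to show that $I$ is open in $[0,1]$; by symmetry $J$ is open too, which contradicts the connectedness of $[0,1]$. Fix $t\in I$ and a point $y\in C_{z_t}$. Then $f(z_t,y)\leq\alpha<\beta$. Lower semicontinuity of $f$ in $x$ is not available, so we use upper semicontinuity of $s\mapsto f(z_s,y)$ instead. This map is continuous in $s$ composed with $f(\cdot,y)$, so $\{s: f(z_s,y)<\beta\}$ is a relatively open neighborhood of $t$. For every $s$ in this neighborhood we have $y\in C'_{z_s}$, and $C'_{z_s}$ lies in one piece. Since $y\in C_{z_t}\subset C'_{x_1}$, that piece is $C'_{x_1}$, so $C_{z_s}\subset C'_{z_s}\subset C'_{x_1}$ and $s\in I$. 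Hence $I$ is open.

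The point that needs care is exactly this one: the fixed point $y$ must lie in $C_{z_t}$, where $f\leq\alpha$, so that the strict gap $\alpha<\beta$ survives under upper semicontinuity. The whole of $C'_{z_s}$, not just $C_{z_s}$, must then be forced into one piece, which is where convexity, and hence connectedness, of $C'_{z_s}$ is used. Closedness of the level sets comes from the lower semicontinuity of $f(x,\cdot)$, and convexity comes from its quasi-convexity. The resulting contradiction produces some $x_0=z_t$ with $\min_{y\in Y}f(x_0,y)>\alpha$.
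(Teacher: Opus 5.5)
Your proof is correct and is essentially the paper's Komiya-style argument: the same $\beta$, the same level sets $C_z\subset C'_z$, the dichotomy $C'_z\subset C'_{x_1}$ or $C'_z\subset C'_{x_2}$, and the partition $I\cup J=[0,1]$. The only differences are cosmetic. You get the dichotomy from path-connectedness of convex sets, where the paper proves it by hand with a supremum along $[p,q]$. You also show $I$ and $J$ are open rather than closed, but this rests on the same step: upper semicontinuity of $f(\cdot,y)$ at a fixed $y\in C_{z_t}$ puts $y\in C'_{z_s}$ for nearby $s$.
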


\begin{proof}
 The proof is by contradiction. Suppose that 
 $\alpha \geq \min_{y\in Y}f(x, y)$ 
 for all $x\in X$. 
 By assumption, there exists a real number $\beta$ such that 
 \begin{align}
  \alpha < \beta < \min_{y\in Y}\max\{f(x_1, y), f(x_2, y)\}. 
 \end{align}
 Then we define two sets $C_z$ and $C'_z$ by 
 \begin{align}
  C_z = \{y\in Y: f(z,y) \leq \alpha\} 
  \quad \textrm{and} \quad 
  C'_z = \{y\in Y: f(z,y) \leq \beta\} 
 \end{align}
 for all $z\in [x_1,x_2]$. We denote by $A$ and $B$ the sets given by 
 \begin{align}
  A=C'_{x_1} \quad \textrm{and}\quad B=C'_{x_2}. 
 \end{align}
 It is clear that $C_z\subset C'_z$ for all $z\in [x_1,x_2]$. 
 By assumption, we know that $C_z$, $C'_z$, $A$, and $B$ are 
 nonempty closed convex subsets of $Y$. 
 We know that $A\cap B$ is empty. 
 If not, we have $y_0\in A\cap B$. Then 
\begin{align}
 f(x_1,y_0)\leq \beta \quad \textrm{and} \quad 
 f(x_2,y_0) \leq \beta
\end{align}
 and hence 
 \begin{align}
  \beta 
& \geq \max \{f(x_1,y_0), f(x_2,y_0)\} 
 \geq \min_{y\in Y}\max \{f(x_1,y), f(x_2,y)\} >\beta. 
 \end{align}
 This is a contradiction. Thus $A\cap B$ is empty. 
 
 Since $f(\,\cdot\,, y)$ is quasi-concave, we have 
 \begin{align}
  f(z,y) \geq \min\{f(x_1,y), f(x_2,y)\}
 \end{align}  
 for all $z\in [x_1,x_2]$ and $y\in Y$. This gives us that 
 $C'_z\subset A\cup B$ for all $z\in [x_1,x_2]$. 
 We next prove the following implication holds true: 
 \begin{center}
 $z\in [x_1,x_2]$ $\Longrightarrow$
 either $C'_z\subset A$ or $C'_z\subset B$ holds. 
 \end{center}
 Let $z\in [x_1,x_2]$ be given. 
 If $z=x_1$, then $C'_z=A$. In this case, 
 $C'_z$ is not a subset of $B$ since $A\cap B$ is empty. 
 Similarly, if $z=x_2$, then $C'_z=B$ and 
 $C'_z$ is not a subset of $A$. 
 So, we may assume that $z\neq x_1$ and $z\neq x_2$. 
 Since $C'_z$ is nonempty, we can fix $p\in C'_z$. 
 Since $C'_z\subset A\cup B$, we have $p\in A\cup B$. 
 We consider the case where 
 $p\in A$ and we prove that $C'_z\subset A$. 
 Let $q\in C'_z$ be given. Then $q\in A\cup B$. 
 We prove that $q\in A$ by contradiction. 
 Assume that $q\notin A$. 
 Then we have $q\in B$. Set 
 \begin{align}
  \lambda_0 = \sup\bigl\{\lambda\in [0,1]: 
 (1-\lambda)p\oplus \lambda q \in A\bigr\}. 
 \end{align}
 Then we have a sequence $\{\lambda_n\}$ in $[0,1]$ such that 
 \begin{align}
  (1-\lambda_n)p \oplus \lambda_n q \in A 
  \quad (\forall n\in \N) \quad \textrm{and} \quad 
  \lim_{n\to \infty}\lambda_n = \lambda_0. 
 \end{align}
 It is obvious that $\lambda_0\in [0,1]$. 
 Since $A$ is closed and 
 \begin{align}
  \lim_{n\to \infty} \bigl(
  (1-\lambda_n)p \oplus \lambda_n q 
  \bigr)
  = (1-\lambda_0)p \oplus \lambda_0 q, 
 \end{align}
 we have 
 $(1-\lambda_0)p \oplus \lambda_0 q \in A$. 
 Thus we obtain 
 \begin{align}
   \lambda_0 = \max\bigl\{\lambda\in [0,1]: 
 (1-\lambda)p\oplus \lambda q \in A\bigr\}. 
 \end{align}
 Since $q\notin A$, we have $\lambda_0\neq 1$. 
 If we take $\mu \in (\lambda_0, 1]$, then 
 $(1-\mu)p \oplus \mu q\notin A$. 
 Since 
 $p, q\in C'_z$, $C'_z$ is convex, and  
 $C'_z\subset A\cup B$, we have 
 \begin{align}
  (1-\mu)p \oplus \mu q \in C'_z \subset A\cup B
 \end{align}
 and hence $(1-\mu)p \oplus \mu q \in B$.  
 Letting $\mu \downarrow \lambda_0$, we have 
 $(1-\lambda_0)p \oplus \lambda_0 q \in B$ 
 since $B$ is closed. 
 This gives us that 
 \begin{align}
  (1-\lambda_0)p \oplus \lambda_0 q \in A\cap B. 
 \end{align}
 This is a contradiction. Consequently, we obtain $q\in A$ 
 and hence $C'_z\subset A$. 
 In this case, 
 $C'_z$ is not a subset of $B$ since $A\cap B$ is empty. 
 Then we also have $C_z\subset C'_z\subset A$. 
 Similarly, in the case where $p\in B$, we can prove that 
 $C'_z\subset B$ and $C'_z$ is not a subset of $A$.
 Then we also have $C_z\subset C'_z\subset B$.  
 Thus either $C'_z\subset A$ or $C'_z\subset B$ 
 holds for all $z\in [x_1,x_2]$. 

 Let $I$ and $J$ be the sets defined by 
 \begin{align}
  \begin{split}
  I&=\bigl\{\lambda\in [0,1]: C_{(1-\lambda)x_1\oplus \lambda x_2} \subset A\bigr\}; \\
  J&=\bigl\{\lambda\in [0,1]: C_{(1-\lambda)x_1\oplus \lambda x_2} \subset B\bigr\}.    
  \end{split}
 \end{align}
 Then we have from $A\cap B=\emptyset$ that $I\cap J$ is empty. 
 Since $C_{x_1}\subset C'_{x_1}=A$, we have $0\in I$. 
 Since $C_{x_2}\subset C'_{x_2}=B$, we have $1\in J$. 
 We next prove that $I$ is closed. 
 Let $\{\lambda_n\}$ be a sequence in $I$ such that 
 $\lambda_n\to \lambda$ and set $z=(1-\lambda)x_1\oplus \lambda x_2$.  
 Let $y\in C_{z}$ be given. 
 Then we know that $f(z,y)\leq \alpha < \beta$. 
 Setting 
 \begin{align}
  z_n=(1-\lambda_n)x_1\oplus \lambda_n x_2
 \end{align}
 for all $n\in \N$, we have from the upper semicontinuity of $f(\,\cdot\,,y)$ that 
 \begin{align}
  \limsup_{n\to \infty}f(z_n, y)\leq f(z, y)<\beta. 
 \end{align}
 Thus there exists $n_0\in \N$ such that $\sup_{k\geq n_0}f(z_k,y)<\beta$. 
 This gives us that $f(z_{n_0},y)<\beta$ 
 and hence $y\in C'_{z_{n_0}}$. 
 On the other hand, since $\lambda_n \in I$, we have 
 \begin{align}
  C_{z_{n_0}}\subset A. 
 \end{align}
 Since $C_{z_{n_0}}\subset C'_{z_{n_0}}$, 
 either $C'_{z_{n_0}}\subset A$ or $C'_{z_{n_0}} \subset B$ holds, 
 and $A\cap B = \emptyset$, 
 we know that $C'_{z_{n_0}}\subset A$. 
 Hence we have $y\in A$. 
 Therefore we have 
 \begin{align}
  C_{(1-\lambda)y_1\oplus \lambda y_2} =C_z \subset A. 
 \end{align}
 This implies that $\lambda\in I$. Thus $I$ is closed. 
 Similarly, we can prove that $J$ is closed. 
 Setting $\lambda^*=\sup I$, we can show that 
 $\lambda^*=\max I$ and $0\leq \lambda^*<1$. 
 If $\mu \in (\lambda^*, 1]$,  
 then $\mu \in J$. Letting $\mu \downarrow \lambda^*$, 
 we obtain $\lambda^* \in J$ since $J$ is closed. Thus we have 
 $\lambda ^*\in I\cap J$. This contradicts $I\cap J=\emptyset$.  
\end{proof}

\begin{lemma}\label{lem:Sion-CAT0-2}
 Let $m\in \N$ be given and $X$, $Y$, and $f$ 
 the same as in Theorem~\ref{thm:Sion-CAT0}. 
 If 
 $x_1,x_2, \dots , x_m\in Y$ and $\alpha \in \R$ satisfy 
 \begin{align}
  \alpha < \min_{y\in X}\max_{1\leq k\leq m}f(x_k, y), 
 \end{align}
 then there exists $x_0\in X$ such that 
 $\alpha < \min_{y\in Y}f(x_0,y)$. 
\end{lemma}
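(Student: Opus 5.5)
Note first that the statement has evident typos: it should read $x_1,\dots,x_m\in X$ and $\alpha<\min_{y\in Y}\max_{1\leq k\leq m}f(x_k,y)$. We prove this corrected version by induction on $m$, following Komiya's scheme. For $m=1$ there is nothing to prove: take $x_0=x_1$. The case $m=2$ is exactly Lemma~\ref{lem:Sion-CAT0-1}, but we will not need it separately, since the inductive step uses Lemma~\ref{lem:Sion-CAT0-1} directly.

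Assume the assertion holds for $m-1$ points (for every Hadamard space $Y$, every bounded one, and every $f$ as in Theorem~\ref{thm:Sion-CAT0}). Suppose $\alpha<\min_{y\in Y}\max_{1\leq k\leq m}f(x_k,y)$, and set
\begin{align*}
 Y'=\{y\in Y: f(x_m,y)\leq \alpha\}.
\end{align*}
If $Y'$ is empty, then $\alpha<f(x_m,y)$ for all $y\in Y$. Since $f(x_m,\cdot\,)$ attains its minimum (Remark~\ref{rem:Sion-CAT0}), we get $\alpha<\min_{y\in Y}f(x_m,y)$, and we take $x_0=x_m$.

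Otherwise $Y'$ is a nonempty closed convex subset of $Y$, because $f(x_m,\cdot\,)$ is lower semicontinuous and quasi-convex. Hence $Y'$ with the restricted metric is itself a bounded Hadamard space, and the restriction $f|_{X\times Y'}$ still satisfies (i) and (ii). For $y\in Y'$ we have $f(x_m,y)\leq\alpha$, while $\max_k f(x_k,y)>\alpha$; therefore $\max_{1\leq k\leq m-1}f(x_k,y)>\alpha$. This maximum is lower semicontinuous in $y$ and $\Delta$-lower semicontinuous, because it is a finite maximum of quasi-convex lower semicontinuous functions and so has closed convex sublevel sets. By Lemma~\ref{lem:Delta-bdd} its minimum over $Y'$ is attained, which gives the strict inequality $\alpha<\min_{y\in Y'}\max_{k\leq m-1}f(x_k,y)$. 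The induction hypothesis applied on $X\times Y'$ then yields $x'\in X$ with $\alpha<\min_{y\in Y'}f(x',y)$.

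It remains to combine $x'$ with $x_m$. We claim that $\alpha<\min_{y\in Y}\max\{f(x',y),f(x_m,y)\}$. If $y\in Y'$, then $f(x',y)>\alpha$. If $y\notin Y'$, then $f(x_m,y)>\alpha$. So the maximum exceeds $\alpha$ at every $y$. The function $y\mapsto\max\{f(x',y),f(x_m,y)\}$ attains its minimum by Lemma~\ref{lem:Delta-bdd}, so the strict inequality survives passing to the minimum. Now Lemma~\ref{lem:Sion-CAT0-1} gives $x_0\in X$ with $\alpha<\min_{y\in Y}f(x_0,y)$.

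The main obstacle is the careful bookkeeping of strict inequalities. Each time, we must ensure that the minimum is attained, via $\Delta$-lower semicontinuity and Lemma~\ref{lem:Delta-bdd}, so that the pointwise bound $>\alpha$ becomes a bound on the minimum. We must also check that the induction is legitimate on the subspace $Y'$: it must be a bounded Hadamard space, which holds since it is closed and convex in the complete space $Y$.
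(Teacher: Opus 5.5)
Your proof is correct and follows essentially the same route as the paper. Both argue by induction on $m$, split off the sublevel set $Y'=\{y\in Y: f(x_m,y)\leq\alpha\}$ and take $x_0=x_m$ when it is empty, apply the induction hypothesis on the bounded Hadamard space $Y'$, and then combine the resulting point with $x_m$ via Lemma~\ref{lem:Sion-CAT0-1}. You also correctly repair the typos ($x_k\in X$, $\min_{y\in Y}$), and you make explicit two things the paper leaves implicit: the attainment of the minima, and that the induction hypothesis must range over all bounded Hadamard spaces.
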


\begin{proof}
 The proof is by induction on $m\in \N$. 
 If $m=1$, then the result clearly holds. 
 Suppose that the result holds for some $m\in \N$. 
 Let $x_1,x_2, \dots , x_m, x_{m+1}\in X$ and $\alpha \in \R$ satisfy 
 \begin{align}
  \alpha < \min_{y\in Y}\max_{1\leq k\leq m+1}f(x_k, y). 
 \end{align}
 Set 
 \begin{align}
  Y'=\{y\in Y: f(x_{m+1}, y)\leq \alpha\}. 
 \end{align}
 Then $Y'$ is a closed and convex subset of $Y$. 
 If $Y'$ is empty, then 
 \begin{align}
  f(x_{m+1},y) > \alpha
 \end{align}
 for all $y\in Y$ and hence we have 
 $\min_{y\in Y}f(x_{m+1},y)>\alpha$. 
 Letting $x_0=x_{m+1}$, we obtain the conclusion. 
 Thus we may suppose that $Y'$ is nonempty. Then 
 $Y'$ is a bounded Hadamard space. Since 
 $f(x_{m+1},y)\leq \alpha$ for all $y\in Y'$, we have 
 \begin{align}
  \alpha 
 < \min_{y\in Y} \max_{1\leq k\leq m+1} f(x_k,y) 
 \leq \min_{y\in Y'} \max_{1\leq k\leq m+1} f(x_k,y) 
 = \min_{y\in Y'} \max_{1\leq k\leq m} f(x_k,y). 
 \end{align}
 It follows from the assumption of our induction argument that 
 there exists $x'_0\in X$ such that 
 $\alpha < \min_{y\in Y'}f(x'_0,y)$.  
 If $y\in Y'$, then $\alpha < f(x'_0,y)$ and $f(x_{m+1},y)\leq \alpha$. 
 Hence we have 
 \begin{align}
  \max\{f(x'_0,y),f(x_{m+1},y)\} 
 =f(x'_0,y) > \alpha. 
 \end{align}
 If $y\in Y\setminus Y'$, then 
 \begin{align}
  \max\{f(x'_0,y), f(x_{m+1},y)\} \geq f(x_{m+1},y) >\alpha. 
 \end{align}
 Thus 
 \begin{align}
  \max\{f(x'_0,y), f(x_{m+1},y)\} > \alpha
 \end{align}
 for all $y\in Y$. Consequently, we have 
 \begin{align}
  \min_{y\in Y} \max\{f(x'_0,y), f(x_{m+1},y)\} > \alpha. 
 \end{align}
 Lemma~\ref{lem:Sion-CAT0-1} ensures that 
 there exists $x_0\in X$ such that 
 $\min_{y\in Y}f(x_0,y)>\alpha$.  
 This completes the proof. 
\end{proof}

\begin{proof}[The proof of Theorem~\ref{thm:Sion-CAT0}]
 As is noted in Remark~\ref{rem:Sion-CAT0}, we have 
 \begin{align}
  \sup_{x\in X}\min_{y\in Y}f(x,y)\leq \min_{y\in Y}\sup_{x\in X}f(x,y). 
 \end{align}
 Thus we prove the reverse inequality. 
 Let $\alpha \in \R$ satisfy 
 $\alpha <\min_{y\in Y}\sup_{x\in X}f(x,y)$ 
 and set 
 \begin{align}
  Y_x=\bigl\{y\in Y: f(x,y)\leq \alpha\bigr\}
 \end{align}
 for all $x\in X$. Then we can prove that 
 $\bigcap _{x\in X}Y_x$ is empty. 
 If not, then there exists $y_0\in Y$ such that 
 $y_0\in Y_x$ for all $x\in X$ and hence 
 \begin{align}
  \alpha \geq \sup_{x\in X}f(x, y_0)
 \geq \min_{y\in Y}\sup_{x\in X}f(x,y). 
 \end{align}
 This is a contradiction. Hence $\bigcap _{x\in X}Y_x$ is empty. 
 Applying Lemma~\ref{lem:Delta-compact}, we have 
 $m\in \N$ and $x_1,x_2,\dots ,x_m\in X$ such that 
 $\bigcap_{k=1}^{m}Y_{x_k}=\emptyset$.  
 Since 
 \begin{align}
  \bigcap_{k=1}^{m}Y_{x_k} 
  = \left\{
 y\in Y: \max_{1\leq k\leq m}f(x_k,y)\leq \alpha
 \right\}, 
 \end{align}
 we have $\alpha < \max_{1\leq k\leq m}f(x_k,y)$ for all $y\in Y$ 
 and hence 
 \begin{align}
  \alpha < \min_{y\in Y}\max_{1\leq k\leq m}f(x_k,y). 
 \end{align}
 Then Lemma~\ref{lem:Sion-CAT0-2} ensures that 
 there exists $x_0\in X$ such that 
 $\alpha < \min_{y\in Y} f(x_0,y)$.  
 Consequently, we have 
 \begin{align}
  \alpha < \sup_{x\in X}\min_{y\in Y} f(x,y).  
 \end{align}
 Letting $\alpha \uparrow \min_{y\in Y}\sup_{x\in X}f(x,y)$ yields 
 \begin{align}
  \min_{y\in Y}\sup_{x\in X}f(x,y)
  \leq \sup_{x\in X}\min_{y\in Y} f(x,y). 
 \end{align} 
 This completes the proof.   
\end{proof}

As a direct consequence of Theorem~\ref{thm:Sion-CAT0}, 
we obtain the following saddle point theorem: 

\begin{corollary}\label{cor:Sion-CAT0}
 Let $(X,d_X)$ and $(Y,d_Y)$ be bounded Hadamard spaces and 
 $f\colon X\times Y\to \R$ a function 
 satisfying~(i) and~(ii) in 
 Theorem~\ref{thm:Sion-CAT0}. 
 Then the minimax equality 
 \begin{align}
  \max_{x\in X}\min_{y\in Y} f(x,y)=\min_{y\in Y} \max_{x\in X} f(x,y) 
 \end{align}
 holds and $f$ has a saddle point. 
\end{corollary}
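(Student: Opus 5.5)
The plan is to apply Theorem~\ref{thm:Sion-CAT0} directly, with $X$ in the role of the (now bounded) first space. Then use the symmetric version of Lemma~\ref{lem:Delta-bdd}, applied on $X$, to turn every $\sup$ over $X$ into a $\max$.

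\emph{Step 1: the left-hand side is attained.} Set $g(x)=\min_{y\in Y}f(x,y)$. By Remark~\ref{rem:Sion-CAT0} this is a real-valued function on $X$. For each $y$, the function $f(\,\cdot\,,y)$ is proper, upper semicontinuous and quasi-concave, hence $\Delta$-upper semicontinuous. The function $g$ is an infimum of a family of $\Delta$-upper semicontinuous functions, so its superlevel sets $\{x: g(x)\geq \lambda\}=\bigcap_{y}\{x: f(x,y)\geq\lambda\}$ are intersections of $\Delta$-closed sets. Hence they are $\Delta$-closed, which makes $g$ $\Delta$-upper semicontinuous. (This requires the superlevel formulation; one checks it is equivalent to the definition via the net characterization proved above.) Since $X$ is bounded, Lemma~\ref{lem:Delta-bdd}(2) gives a maximizer $x_0$ of $g$. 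So $\sup_X\min_Y f=\max_X\min_Y f=g(x_0)$.

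\emph{Step 2: the right-hand side is attained.} For each $y\in Y$, the function $f(\,\cdot\,,y)$ is $\Delta$-upper semicontinuous on the bounded space $X$, so again by Lemma~\ref{lem:Delta-bdd}(2) it attains its supremum. Thus $\sup_{x}f(x,y)=\max_x f(x,y)$. By Remark~\ref{rem:Sion-CAT0}, the function $h(y)=\max_x f(x,y)$ is $\Delta$-lower semicontinuous and has a minimizer $y_0$. Combining this with Theorem~\ref{thm:Sion-CAT0} gives the minimax equality
\begin{align*}
 \max_{x\in X}\min_{y\in Y} f(x,y)=g(x_0)=h(y_0)=\min_{y\in Y}\max_{x\in X} f(x,y).
\end{align*}
Note also that $h(y_0)$ is finite, since $h(y_0)=g(x_0)\in\R$.

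\emph{Step 3: the saddle point.} We have the chain
\begin{align*}
 f(x,y_0)\leq h(y_0)=g(x_0)\leq f(x_0,y)
\end{align*}
for all $x\in X$ and $y\in Y$. Taking $x=x_0$ and $y=y_0$ forces $f(x_0,y_0)=g(x_0)$, so
\begin{align*}
 f(x,y_0)\leq f(x_0,y_0)\leq f(x_0,y)
\end{align*}
for all $(x,y)$, that is, $(x_0,y_0)\in\Saddle(f)$.

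The only delicate point is Step~1: checking that the pointwise minimum $g$ is $\Delta$-upper semicontinuous. I would verify it through the net characterization. Let $x_\alpha$ be $\Delta$-convergent to $u$. For each fixed $y$ we have $\limsup_\alpha g(x_\alpha)\leq\limsup_\alpha f(x_\alpha,y)\leq f(u,y)$. Taking the minimum over $y$ gives $\limsup_\alpha g(x_\alpha)\leq g(u)$, which is what we need.
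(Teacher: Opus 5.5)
Your proof is correct and follows the same route as the paper. Both combine Lemma~\ref{lem:Delta-bdd} with Theorem~\ref{thm:Sion-CAT0} so that both sides of the equality are attained, then obtain the saddle point from the chain $f(x,y_0)\leq \max_{x\in X}f(x,y_0)=\min_{y\in Y}f(x_0,y)\leq f(x_0,y)$. Your explicit check that $x\mapsto\min_{y\in Y}f(x,y)$ is $\Delta$-upper semicontinuous fills in a step the paper leaves implicit. That check is simply the definition (its superlevel sets are intersections of $\Delta$-closed sets), so the net characterization is not needed.
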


\begin{proof}
 Since $X$ and $Y$ are bounded Hadamard space, 
 Lemma~\ref{lem:Delta-bdd} 
 and Theorem~\ref{thm:Sion-CAT0} ensure that the minimax 
 equality holds. Then there exists $(x_0,y_0) \in X\times Y$ such that 
 \begin{align}
 \min_{y\in Y}f(x_0,y) = 
  \max_{x\in X}\min_{y\in Y} f(x,y) 
 \quad 
 \textrm{and}
 \quad 
 \max_{x\in X} f(x,y_0)= \min_{y\in Y} \max_{x\in X} f(x,y). 
 \end{align}
 These equalities give us that 
 \begin{align}
  f(x,y_0) \leq \max_{x\in X}f(x, y_0) 
 =\min_{y\in Y}f(x_0,y) \leq f(x_0,y)
 \end{align}
 for all $(x,y)\in X\times Y$. 
 Consequently, we obtain the desired inequalities. 
\end{proof}

\section{Coercive saddle functions in Hadamard spaces}\label{sec:coercive}

In this section, we prove the following minimax theorem 
for coercive saddle functions in Hadamard spaces 
without boundedness assumption: 

\begin{theorem}\label{thm:minimax-coercive}
 Let $(X,d_X)$ and $(Y,d_Y)$ be Hadamard spaces, 
 $d$ the $\ell^2$-metric on $X\times Y$, 
 and $f\colon X\times Y\to \R$ a function satisfying 
 \begin{enumerate}
  \item[(i)] $f(\,\cdot\,,y)$ is upper semicontinuous 
 and concave for each $y\in Y$; 
  \item[(ii)] $f(x,\cdot \,)$ is lower semicontinuous 
 and convex for each $x\in X$; 
  \item[(iii)] there exists $(a,b)\in X\times Y$ such that 
 \begin{align}
  d\bigl((x_n,y_n), (a,b)\bigr)\to \infty 
  \Rightarrow \liminf_{n\to \infty}\bigl(
  f(x_n, b) - f(a, y_n)\bigr) <0
 \end{align}
 whenever $\{(x_n,y_n)\}$ is a sequence in $X\times Y$. 
 \end{enumerate}
 Then the minimax equality 
 \begin{align}
  \max_{x\in X}\min_{y\in Y} f(x,y)=\min_{y\in Y} \max_{x\in X} f(x,y) 
 \end{align}
 holds and $f$ has a saddle point. 
\end{theorem}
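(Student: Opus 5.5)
Since $X\times Y$ is not bounded, the plan is to truncate to bounded closed convex pieces, apply Corollary~\ref{cor:Sion-CAT0} on each piece, and use the coercivity~(iii) to show the resulting saddle points stay bounded. Then I would pass to a $\Delta$-limit and finally remove the truncation using concavity and convexity.

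For $r>0$ set $X_r=\{x\in X: d_X(x,a)\leq r\}$ and $Y_r=\{y\in Y: d_Y(y,b)\leq r\}$. These are closed, convex and bounded, hence bounded Hadamard spaces. The restriction of $f$ to $X_r\times Y_r$ still satisfies~(i) and~(ii), because concave and convex functions are quasi-concave and quasi-convex. Corollary~\ref{cor:Sion-CAT0} therefore gives, for each $n\in\N$, a saddle point $(x_n,y_n)\in X_n\times Y_n$ with
\begin{align}
 f(x,y_n)\leq f(x_n,y_n)\leq f(x_n,y) \quad (x\in X_n,\ y\in Y_n).
\end{align}
Taking $x=a$ and $y=b$ (admissible since $a\in X_n$, $b\in Y_n$) yields $f(a,y_n)\leq f(x_n,b)$, i.e.\ $f(x_n,b)-f(a,y_n)\geq 0$ for every $n$.

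Next I would show that $\{(x_n,y_n)\}$ is bounded. If it were not, some subsequence would have $d\bigl((x_{n_i},y_{n_i}),(a,b)\bigr)\to\infty$. Condition~(iii) applied to that subsequence then gives $\liminf_i\bigl(f(x_{n_i},b)-f(a,y_{n_i})\bigr)<0$, which contradicts the nonnegativity just shown. So there is $R$ with $(x_n,y_n)\in X_R\times Y_R$ for all $n$. By Theorem~\ref{thm:Kirk--Panyanak}, applied twice (first in $X$, then along a further subsequence in $Y$), I get a subsequence with $x_{n_i}\ \Delta$-convergent to some $x_0$ and $y_{n_i}\ \Delta$-convergent to some $y_0$.

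Now fix $(x,y)\in X\times Y$; for large $i$ we have $x\in X_{n_i}$ and $y\in Y_{n_i}$, so $f(x,y_{n_i})\leq f(x_{n_i},y)$. The function $f(x,\cdot)$ is lsc and convex, hence $\Delta$-lsc, so $f(x,y_0)\leq\liminf_i f(x,y_{n_i})$. Likewise $f(\cdot,y)$ is $\Delta$-usc, so $\limsup_i f(x_{n_i},y)\leq f(x_0,y)$. Chaining these gives
\begin{align}
 f(x,y_0)\leq \liminf_{i} f(x,y_{n_i}) \leq \limsup_{i} f(x_{n_i},y)\leq f(x_0,y)
\end{align}
for all $(x,y)$. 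Taking $x=x_0$ and $y=y_0$ shows $f(x_0,y_0)$ sits between the two sides, so $(x_0,y_0)\in\Saddle(f)$. The minimax equality with attained max and min then follows in the standard way: $\max_x\min_y f=\min_y f(x_0,y)=f(x_0,y_0)=\max_x f(x,y_0)=\min_y\max_x f$, with the weak inequality $\sup\inf\leq\inf\sup$ closing the chain.

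The step I regard as delicate is the passage to the limit. It needs the $\Delta$-semicontinuity of $f(x,\cdot)$ and $f(\cdot,y)$ separately, and that holds exactly because the sections are (quasi-)convex/concave and lsc/usc, as recorded in the preliminaries. It also needs both coordinates $\Delta$-convergent along a common subsequence, which is handled by extracting subsequences successively. The boundedness argument relies only on the pointwise inequality at $(a,b)$, which is where choosing the truncations centered at $(a,b)$ is essential.
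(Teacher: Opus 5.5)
Your proof is correct, and it takes a different route from the paper's.

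\textbf{The paper's route.} The paper uses (iii) once, to find a single $\delta>0$ such that $f(x,b)-f(a,y)\le 0$ whenever $d_\infty\bigl((x,y),(a,b)\bigr)\ge\delta$. It applies Corollary~\ref{cor:Sion-CAT0} on the one box $C\times D$ of radius $\delta$. It then extends the saddle inequality to all of $X\times Y$ by a local-to-global argument, using convexity of $F\bigl((x_0,y_0),\cdot\,\bigr)=f(x_0,\cdot\,)-f(\cdot\,,y_0)$ along product geodesics. One moves a short step toward an arbitrary $(x,y)$, in one of two ways:
\begin{itemize}
\item from $(x_0,y_0)$ itself, if it lies in the open box $V$;
\item from $(a,b)$, if $(x_0,y_0)$ lies on the boundary, where the choice of $\delta$ forces $F\bigl((x_0,y_0),(a,b)\bigr)=0$.
\end{itemize}

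\textbf{Your route.} You exhaust $X\times Y$ by boxes of radius $n$ and use (iii) only to bound the approximate saddle points. You then pass to the limit with Theorem~\ref{thm:Kirk--Panyanak} and the $\Delta$-semicontinuity of the sections. Because you extract the $\Delta$-limits coordinatewise, you never need $\Delta$-convergence in the product. The passage from the net formulation of $\Delta$-semicontinuity to your sequences is supplied by Lemma~\ref{lem:Delta-net}(iii).

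\textbf{What each buys.} The paper's route needs only one application of the bounded minimax theorem and no second compactness or limiting step. However, its extension step relies on the convexity inequality for $F\bigl((x_0,y_0),\cdot\,\bigr)$. That inequality is unavailable under mere quasi-concavity/quasi-convexity, since a local minimizer of a quasi-convex function need not be global. Your route uses only semicontinuity together with quasi-concavity/quasi-convexity, both in Corollary~\ref{cor:Sion-CAT0} and in the $\Delta$-semicontinuity step. It therefore proves the theorem under the weaker Sion-type hypotheses~(i) and~(ii) of Theorem~\ref{thm:Sion-CAT0}, with (iii) unchanged.
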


\begin{proof}
 Note that $(X\times Y, d)$ is an Hadamard space 
 and the unique normalized geodesic $\gamma$ from $(x,y)$ to $(x',y')$ 
 in $X\times Y$ 
 is given by~\eqref{eq:geodesic-product}. 
 We also define the $\ell^{\infty}$-metric $d_{\infty}$ on $X\times Y$ by 
 \begin{align}
  d_{\infty}\bigl((x,y), (x',y')\bigr)
 =\max\left\{
  d_X(x,x'), d_Y(y,y')
  \right\}
 \end{align}
 for all $(x,y)$ and $(x',y')$ in $X\times Y$. 
 It is clear that 
 \begin{align}
  d_{\infty}\bigl((x,y), (x',y')\bigr)
  \leq d\bigl((x,y), (x',y')\bigr)
  \leq \sqrt{2} \, d_{\infty}\bigl((x,y), (x',y')\bigr)
 \end{align}
 and hence the metric topology on $(X\times Y,d)$ coincides with 
 that on $(X\times Y,d_{\infty})$. 
 Let $F$ be the function defined by 
 \begin{align}
  F\bigl((x,y), (x',y')\bigr) = f(x, y') - f(x', y)
 \end{align}
 for all $(x,y)$ and $(x',y')$ in $X\times Y$. 

 It follows from~(iii) that there exists $\delta>0$ such that 
 \begin{align}
  d_{\infty}\bigl((x,y), (a,b)\bigr) \geq \delta 
 \Rightarrow 
 f(x, b) - f(a, y) \leq 0. 
 \end{align}
 If not, then 
 we have a sequence $\{(x_n,y_n)\}$ in $X\times Y$ such that 
 \begin{align}
  d_{\infty}\bigl((x_n,y_n),(a,b)\bigr)\to \infty 
  \quad \textrm{and} \quad 
  f(x_n, b) - f(a, y_n) > 0. 
 \end{align}
 This gives us that 
 \begin{align}
  \liminf_{n\to \infty}\bigl(f(x_n, b) - f(a, y_n)\bigr)\geq 0. 
 \end{align}
 On the other hand, since 
 \begin{align}
  d\bigl((x_n,y_n),(a,b)\bigr)
  \geq d_{\infty}\bigl((x_n,y_n),(a,b)\bigr)
  \to \infty, 
 \end{align}
 we have from~(iii) that 
 \begin{align}
  \liminf_{n\to \infty}\bigl(f(x_n, b) - f(a, y_n)\bigr) <0. 
 \end{align}
 This is a contradiction. 

 In order to apply Corollary~\ref{cor:Sion-CAT0}, 
 we define two sets $C$ and $D$ by 
 \begin{align}
  C=\{x\in X:d_X(x,a)\leq \delta\}
 \quad \textrm{and} \quad 
  D=\{y\in Y: d_Y(y,b)\leq \delta\}. 
 \end{align}
 It is obvious that 
 \begin{align}
  C\times D = \{(x,y)\in X\times Y: d_{\infty}
 \bigl((x,y), (a,b)\bigr) \leq \delta\}. 
 \end{align}
 Since $C$ and $D$ are bounded closed convex subsets of 
 $X$ and $Y$, respectively, these spaces are bounded Hadamard spaces. 
 Thus Corollary~\ref{cor:Sion-CAT0} ensures that 
 there a saddle point $(x_0, y_0) \in C\times D$ 
 of the function $f$ on $C\times D$. 
 This is equivalent to 
 \begin{align}\label{eq:thm:minimax-coercive-a}
  F\bigl((x_0,y_0), (x,y)\bigr) 
 = f(x_0, y) - f(x, y_0) \geq 0
 \end{align}
 for all $(x,y) \in C\times D$. 

 We next prove that the inequality~\eqref{eq:thm:minimax-coercive-a} holds 
 for all $(x,y) \in X\times Y$. 
 Let $(x,y)$ be a point in $X\times Y$. 
 Since $(x_0,y_0)\in C\times D$, we have 
 $d_{\infty} \bigl((x_0,y_0), (a,b)\bigr) \leq \delta$.  
 We first consider the case where $d_{\infty}\bigl((x_0,y_0), (a,b)\bigr)<\delta$. 
 Since the function 
 \begin{align}
  t\mapsto (1-t) (x_0,y_0) \oplus t (x,y)
 \end{align}
 on $[0,1]$ is convergent to $(x_0,y_0)$ as $t\downarrow 0$ and 
 the set 
 \begin{align}
  V=\bigl\{(u,v)\in X\times Y: 
  d_{\infty} \bigl((u,v), (a,b)\bigr) < \delta
 \bigr\}
 \end{align}
 is an open neighborhood of $(x_0,y_0)$, 
 there exists $t_0\in (0,1)$ such that 
 \begin{align}
  (1-t_0) (x_0,y_0) \oplus t_0 (x,y) 
  \in V. 
 \end{align}
 Since $V\subset C\times D$ and $F\bigl((x_0,y_0),\cdot\,\bigr)$ is convex 
 on $X\times Y$, we have 
 \begin{align}
 \begin{split}
   0 
  &\leq F\bigl((x_0,y_0), (1-t_0) (x_0,y_0) \oplus t_0 (x,y) \bigr) \\
  &\leq (1-t_0) F\bigl((x_0,y_0), (x_0,y_0)\bigr) 
 +t_0 F\bigl((x_0,y_0), (x,y)\bigr) \\
  &\leq F\bigl((x_0,y_0), (x,y) \bigr).   
 \end{split}
 \end{align}
 Thus~\eqref{eq:thm:minimax-coercive-a} holds. 
 We next consider the case where 
 $d_{\infty}\bigl((x_0,y_0), (a,b)\bigr)=\delta$.  
 Then we have 
 \begin{align}
  F\bigl((x_0,y_0), (a,b)\bigr)
  =f(x_0,b) - f(a, y_0) \leq 0. 
 \end{align}
 Since $(a,b)\in C\times D$ and $(x_0,y_0)$ 
 is a saddle point of $f$ on $C\times D$, 
 we also have 
  $F\bigl((x_0,y_0), (a,b)\bigr) \geq 0$. 
 Consequently, we obtain 
 \begin{align}
  F\bigl((x_0,y_0), (a,b)\bigr)=0. 
 \end{align}
 Since the set $V$ defined above is an open neighborhood of $(a,b)$, 
 we can choose $t_1\in (0,1)$ such that 
 \begin{align}
  (1-t_1) (a,b) \oplus t_1 (x,y)  
  \in V. 
 \end{align}
 Then we have 
 \begin{align}
 \begin{split}
  0 
  &\leq F\bigl((x_0,y_0), (1-t_1) (a,b) \oplus t_1 (x,y) \bigr) \\
  &\leq (1-t_1)F\bigl((x_0,y_0), (a,b) \bigr) 
 + t_1 F\bigl((x_0,y_0), (x,y) \bigr) \\
  &\leq  F\bigl((x_0,y_0), (x,y) \bigr).   
 \end{split}
 \end{align}
 Thus~\eqref{eq:thm:minimax-coercive-a} holds. 
 Consequently, $(x_0,y_0)$ is a saddle point of $f$ on $X\times Y$. 

 We finally prove the minimax equality. 
 Since $(x_0,y_0)$ is a saddle point of $f$ on $X\times Y$, we have 
 \begin{align}
  \max_{x\in X}f(x,y_0) = f(x_0,y_0) = \min_{y\in Y} f(x_0,y). 
 \end{align}
 This gives us that 
 \begin{align}
  \min_{y\in Y}\max_{x\in X}f(x,y)
  \leq \max_{x\in X}f(x,y_0) = 
  \min_{y\in Y} f(x_0,y)
  \leq \max_{x\in X}\min_{y\in Y} f(x,y). 
 \end{align}
 and hence we obtain 
 \begin{align}
  \min_{y\in Y}\max_{x\in X}f(x,y)
  \leq \max_{x\in X}\min_{y\in Y} f(x,y). 
 \end{align}
 The reverse inequality is obvious. This completes the proof. 
\end{proof}

\section{Resolvents of saddle functions in Hadamard spaces}\label{sec:resolvent}

In this section, we define the resolvents of 
saddle functions in Hadamard spaces. 

\begin{theorem}\label{thm:resolvent-minimax}
 Let $(X,d_X)$ and $(Y,d_Y)$ be Hadamard spaces 
 and $f\colon X\times Y\to \R$ a function satisfying 
 \begin{enumerate}
  \item[(i)] $f(\,\cdot\,,y)$ is upper semicontinuous 
 and concave for each $y\in Y$; 
  \item[(ii)] $f(x,\cdot \,)$ is lower semicontinuous 
 and convex for each $x\in X$. 
 \end{enumerate}
 If $(a,b)\in X\times Y$, then 
 there exists a unique saddle point $(\hat{a}, \hat{b})$ 
 in $X\times Y$ of the function $g\colon X\times Y\to \R$ 
 defined by 
\begin{align}
  g(x,y)= f(x,y) - \frac{1}{2}d_X(x,a)^2 + \frac{1}{2}d_Y(y,b)^2 
\end{align}
 for all $(x,y)\in X\times Y$. 
\end{theorem}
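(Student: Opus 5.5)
Existence will come from Theorem~\ref{thm:minimax-coercive} applied to $g$, and uniqueness from a direct comparison of two saddle points using the quadratic terms.

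\textbf{Existence.} First, $g$ satisfies (i) and (ii) of Theorem~\ref{thm:minimax-coercive}. Indeed, $-\frac12 d_X(\cdot,a)^2$ is continuous and concave, and $\frac12 d_Y(\cdot,b)^2$ is continuous and convex, by \eqref{eq:CN-inequality}. It remains to verify the coercivity condition (iii) at the base point $(a,b)$. We have
\begin{align*}
 g(x,b)-g(a,y)=f(x,b)-f(a,y)-\tfrac12 d_X(x,a)^2-\tfrac12 d_Y(y,b)^2 .
\end{align*}
Apply Theorem~\ref{thm:Bacak-umbrella} to the proper lower semicontinuous convex functions $-f(\cdot,b)$ on $X$ and $f(a,\cdot)$ on $Y$. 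This gives $C>0$ with
\begin{align*}
 f(x,b)\le C\bigl(d_X(x,a)+1\bigr)\quad\text{and}\quad -f(a,y)\le C\bigl(d_Y(y,b)+1\bigr).
\end{align*}
Hence
\begin{align*}
 g(x,b)-g(a,y)\le 2C+\sqrt2\,C\,d\bigl((x,y),(a,b)\bigr)-\tfrac12 d\bigl((x,y),(a,b)\bigr)^2 .
\end{align*}
The right-hand side tends to $-\infty$ as $d\bigl((x_n,y_n),(a,b)\bigr)\to\infty$, so (iii) holds. Theorem~\ref{thm:minimax-coercive} then yields a saddle point $(\hat a,\hat b)$ of $g$.

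\textbf{Uniqueness.} Let $(x_1,y_1)$ and $(x_2,y_2)$ be saddle points of $g$. Set $G\bigl((x,y),(x',y')\bigr)=g(x,y')-g(x',y)$, so that $G\bigl((x_i,y_i),(x,y)\bigr)\ge0$ for all $(x,y)$ and $i=1,2$. Rather than adding these two inequalities directly, I will test against midpoints to exploit strong convexity. Let $m$ be the midpoint of $(x_1,y_1)$ and $(x_2,y_2)$, and write $\phi(\cdot)=G\bigl((x_1,y_1),\cdot\bigr)$. Then $\phi$ is convex in the product geodesic structure, and by \eqref{eq:CN-inequality} applied coordinatewise it is in fact $1$-strongly convex:
\begin{align*}
 \phi(m)\le \tfrac12\phi(x_1,y_1)+\tfrac12\phi(x_2,y_2)-\tfrac18 d\bigl((x_1,y_1),(x_2,y_2)\bigr)^2 ,
\end{align*}
with $\phi(x_1,y_1)=0$ and $\phi(m)\ge0$. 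This gives
\begin{align*}
 G\bigl((x_1,y_1),(x_2,y_2)\bigr)\ge \tfrac14 d\bigl((x_1,y_1),(x_2,y_2)\bigr)^2 .
\end{align*}
Symmetrically,
\begin{align*}
 G\bigl((x_2,y_2),(x_1,y_1)\bigr)\ge \tfrac14 d\bigl((x_1,y_1),(x_2,y_2)\bigr)^2 .
\end{align*}
Since $G(u,v)+G(v,u)=0$ identically, adding the two inequalities gives $d\bigl((x_1,y_1),(x_2,y_2)\bigr)=0$.

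The main obstacle is the strong convexity estimate for $\phi$. Written out,
\begin{align*}
 \phi(x,y)=f(x_1,y)-f(x,y_1)+\tfrac12 d_Y(y,b)^2+\tfrac12 d_X(x,a)^2+\text{const}.
\end{align*}
The $f$-part is convex along product geodesics by (i) and (ii). Each squared-distance term contributes $-\tfrac18 d_X(x_1,x_2)^2$ and $-\tfrac18 d_Y(y_1,y_2)^2$ at the midpoint by \eqref{eq:CN-inequality} with $\alpha=\tfrac12$. Together these give the factor $\tfrac18 d^2$, and then $\tfrac14 d^2$ after multiplying by $2$. This computation needs care but should go through.
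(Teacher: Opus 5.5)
Your proposal is correct and follows the paper's proof essentially step for step. Existence comes from Theorem~\ref{thm:minimax-coercive} once coercivity is obtained from Theorem~\ref{thm:Bacak-umbrella}; the paper applies that lemma once to the jointly convex function $\Phi(x,y)=f(a,y)-f(x,b)$ on $X\times Y$ rather than to each factor separately, which is immaterial. Uniqueness comes, as in the paper, from testing each saddle inequality at the midpoint of the two saddle points, using convexity of the $f$-part together with \eqref{eq:CN-inequality}, and then summing; your intermediate bounds $G\bigl((x_i,y_i),(x_j,y_j)\bigr)\ge\frac14 d\bigl((x_1,y_1),(x_2,y_2)\bigr)^2$ are just a rearrangement of that same computation.
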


\begin{proof}
 Let $d$ be the $\ell^2$-metric on $X\times Y$. 
 It is clear that $g(\,\cdot\,, y)$ is 
 upper semicontinuous and concave for each $y\in Y$ 
 and $g(x,\cdot \,)$ is lower semicontinuous and convex 
 for each $x\in X$. 
 
 We then prove that 
 \begin{align}
  d\bigl((x_n,y_n), (a,b)\bigr)\to \infty 
  \Rightarrow \frac{g(x_n, b) - g(a, y_n)}
  {d\bigl((x_n,y_n), (a,b)\bigr)} 
  \to -\infty
 \end{align}
 whenever $\{(x_n,y_n)\}$ is a sequence in $X\times Y$. 
 Let $\{(x_n,y_n)\}$ be a sequence in $X\times Y$ such that 
 \begin{align}
  d\bigl((x_n,y_n), (a,b)\bigr)\to \infty. 
 \end{align}
 Note that the function $\Phi\colon X\times Y\to \R$ defined by 
 \begin{align}
  \Phi(x,y) = f(a,y) - f(x,b)
 \end{align} 
 for all $(x,y)\in X\times Y$ 
 is a real-valued lower semicontinuous convex function 
 on the Hadamard space $(X\times Y,d)$. 
 Theorem~\ref{thm:Bacak-umbrella} ensures that 
 there exists a positive real number $C$ such that 
 \begin{align}
  \Phi(x,y) \geq -C\Bigl[d\bigl((x,y),(a,b)\bigr)+1\Bigr]
 \end{align}
 for all $(x,y)\in X\times Y$. This gives us that 
 \begin{align}
 \begin{split}
  \frac{g(x_n, b) - g(a, y_n)}{d\bigl((x_n,y_n), (a,b)\bigr)} 
  &= \frac{f(x_n, b) - f(a, y_n)
  - 2^{-1}d_X(x_n,a)^2 - 2^{-1}d_Y(y_n,b)^2
  }{d\bigl((x_n,y_n), (a,b)\bigr)} \\
  &= \frac{-\Phi(x_n,y_n) 
  - 2^{-1} d\bigl((x_n,y_n), (a,b)\bigr)^2}{d\bigl((x_n,y_n), (a,b)\bigr)} \\
  &\leq C + \frac{1}{d\bigl((x_n,y_n), (a,b)\bigr)} 
  -\frac{1}{2}d\bigl((x_n,y_n), (a,b)\bigr) \to -\infty.   
 \end{split}
 \end{align}
 This gives us that 
\begin{align}
 \frac{g(x_n, b) - g(a, y_n)}
  {d\bigl((x_n,y_n), (a,b)\bigr)} 
  \to -\infty. 
\end{align}
 Consequently, we have 
 \begin{align}
  d\bigl((x_n,y_n), (a,b)\bigr)\to \infty 
  \Rightarrow \liminf_{n\to \infty}\bigl(g(x_n, b) - g(a, y_n)\bigr) <0
 \end{align}
 whenever $\{(x_n,y_n)\}$ is a sequence in $X\times Y$. 
 Therefore Theorem~\ref{thm:minimax-coercive} ensures that 
 $g$ has a saddle point $(\hat{a},\hat{b})$. 

 We finally prove the uniqueness of such a saddle point.  
 Let $(a_i,b_i)$ be saddle points of $g$ for $i=1,2$. 
 Then we have 
 \begin{align}
  g(a_i,y) - g(x,b_i) \geq 0 
 \end{align}
 for all $(x,y)\in X\times Y$ and $i=1,2$. 
 Set
 \begin{align}
  (p,q)=\frac{1}{2}(a_1,b_1)\oplus \frac{1}{2}(a_2,b_2)
 \end{align}
 and define $F$ and $G$ by 
 \begin{align}
 \begin{split}
  F\bigl((x,y),(x',y')\bigr) &= f(x,y')-f(x',y); \\
  G\bigl((x,y),(x',y')\bigr) &= g(x,y')-g(x',y)  
 \end{split}
 \end{align}
 for all $(x,y)$ and $(x',y')$ in $X\times Y$. 
 It is clear that 
 \begin{align}
 \begin{split}
  &G\bigl((x,y),(x',y')\bigr) \\
  &\quad = F\bigl((x,y),(x',y')\bigr) 
  -\frac{1}{2}d\bigl((x,y), (a,b)\bigr)^2 
  +\frac{1}{2}d\bigl((x',y'), (a,b)\bigr)^2  
 \end{split}
 \end{align}
for all $(x,y)$ and $(x',y')$ in $X\times Y$. 
 We then obtain 
 \begin{align}
 \begin{split}
  0 
  &\leq 2G\bigl((a_i,b_i),(p,q)\bigr) \\
  &= 2F\bigl((a_i,b_i),(p,q)\bigr)
 -d\bigl((a_i,b_i), (a,b)\bigr)^2 
  +d\bigl((p,q), (a,b)\bigr)^2 \\
  &\leq F\bigl((a_i,b_i),(a_1,b_1)\bigr) 
 + F\bigl((a_i,b_i),(a_2,b_2)\bigr)  -d\bigl((a_i,b_i), (a,b)\bigr)^2 \\
  &\quad +\frac{1}{2}d\bigl((a_1,b_1), (a,b)\bigr)^2 
+\frac{1}{2}d\bigl((a_2,b_2), (a,b)\bigr)^2 
 -\frac{1}{4}d\bigl((a_1,b_1), (a_2,b_2)\bigr)^2.   
 \end{split}
 \end{align}
 Summing up these two inequalities yield that 
 \begin{align}
 \begin{split}
  0&\leq 
 F\bigl((a_1,b_1),(a_2,b_2)\bigr) 
 + F\bigl((a_2,b_2),(a_1,b_1)\bigr) \\
 &\quad -d\bigl((a_1,b_1), (a,b)\bigr)^2 
-d\bigl((a_2,b_2), (a,b)\bigr)^2 \\
  &\quad +d\bigl((a_1,b_1), (a,b)\bigr)^2 
+d\bigl((a_2,b_2), (a,b)\bigr)^2 
 -\frac{1}{2}d\bigl((a_1,b_1), (a_2,b_2)\bigr)^2 \\
  &= -\frac{1}{2}d\bigl((a_1,b_1), (a_2,b_2)\bigr)^2  
 \end{split}
 \end{align}
Hence we have $(a_1,b_1)=(a_2,b_2)$. 
This completes the proof. 
\end{proof}

We next give the definition of the resolvents of saddle functions  
in Hadamard spaces. 

\begin{definition}
 \label{def:resolvent-of-saddle-function}
 Let $(X,d_X)$, $(Y,d_Y)$, 
 and $f\colon X\times Y\to \R$ be the same as in 
 Theorem~\ref{thm:resolvent-minimax}. 
 For each point $(x,y)$ in $X\times Y$, 
 the unique saddle point 
 $(\hat{x},\hat{y})$ of the function 
 \begin{align}
  (u,v) \mapsto  f(u,v) -\frac{1}{2}d_X(u,x)^2 + \frac{1}{2}d_Y(v,y)^2
 \end{align}
 on $X\times Y$ is denoted by $R_f(x,y)$. The mapping 
 $R_f$ of $X\times Y$ into itself is called the resolvent of $f$. 
\end{definition}

\begin{theorem}\label{thm:resolvent-saddle}
 Let $(X,d_X)$, $(Y,d_Y)$, and $f$ be the same as in 
 Theorem~\ref{thm:resolvent-minimax}, 
 $d$ the $\ell^2$-metric on $X\times Y$,  
 $R_{\lambda}$ the resolvent of $\lambda f$ for each $\lambda >0$, 
 and set 
 \begin{align}
  R_{\lambda}(x,y)=\bigl(R_{1,\lambda}(x,y),R_{2,\lambda}(x,y)\bigr)
 \end{align}
 for each $(x,y)\in X\times Y$ and $\lambda >0$. 
 Then the following hold: 
 \begin{enumerate}
  \item[(i)] If $\lambda>0$, then $R_{\lambda}$ is a single-valued mapping of 
 $X\times X$ into itself and $\Fix (R_{\lambda})=\Saddle (f)$; 
  \item[(ii)] if $\lambda>0$, then 
 the inequality 
 \begin{align}
  \begin{split}
  &d\bigl(R_{\lambda}(x,y), (x',y')\bigr)^2
 +d\bigl(R_{\lambda}(x,y), (x,y)\bigr)^2 \\
 &\quad +2\lambda \bigl\{f(x',R_{2,\lambda}(x,y))-f(R_{1,\lambda}(x,y),y')\bigr\} 
  \leq d\bigl((x,y), (x',y')\bigr)^2   
  \end{split}
 \end{align}
  holds for all $(x,y)$ and $(x',y')$ in $X\times Y$; 
  \item[(iii)] if $\lambda, \mu>0$, then 
 the inequality 
 \begin{align}
 \begin{split}
  &(\lambda+\mu)d\bigl(R_{\lambda}(x,y), R_{\mu}(x',y')\bigr)^2
 +\mu d\bigl(R_{\lambda}(x,y), (x,y)\bigr)^2 \\
 &\quad + \lambda d\bigl(R_{\mu}(x',y'), (x',y')\bigr)^2 \\
 &\leq \lambda d\bigl(R_{\lambda}(x,y), (x',y')\bigr)^2
+\mu d\bigl(R_{\mu}(x',y'), (x,y)\bigr)^2  
 \end{split}
 \end{align}
  holds for all $(x,y)$ and $(x',y')$ in $X\times Y$; 
  \item[(iv)] if $\lambda >0$, then $R_{\lambda}$ 
 is both firmly metrically nonspreading and 
 nonexpansive; 
  \item[(v)] if $\lambda,\mu>0$ and $(x,y)\in X\times Y$, then 
 \begin{align}
  \frac{1}{\mu}d\bigl(R_{\mu}R_{\lambda}(x,y),R_{\lambda}(x,y)\bigr)
  \leq \frac{1}{\lambda}d\bigl(R_{\lambda}(x,y),(x,y)\bigr). 
 \end{align}
 \end{enumerate}
\end{theorem}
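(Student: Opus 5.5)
The engine for (ii) through (v) is a single three-point inequality, obtained from the saddle-point characterisation of $R_\lambda$ by testing against points on the geodesic towards $(x',y')$ and using the CN-inequality in the product space.

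\emph{Part (i).} Since $\lambda f$ satisfies (i) and (ii) of Theorem~\ref{thm:resolvent-minimax}, that theorem shows $R_\lambda$ is well defined and single valued on $X\times Y$. For $\Fix(R_\lambda)=\Saddle(f)$, first suppose $(x,y)\in\Saddle(f)$. Then
\begin{align*}
\lambda f(u,y)-\tfrac12 d_X(u,x)^2\le \lambda f(x,y)\le \lambda f(x,v)+\tfrac12 d_Y(v,y)^2 ,
\end{align*}
so $(x,y)$ is a saddle point of the regularised function, and uniqueness gives $R_\lambda(x,y)=(x,y)$. The converse, that fixed points are saddle points of $f$, is deduced from (ii) below. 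Put $(x,y)=R_\lambda(x,y)$ in (ii); the distance terms cancel against the right-hand side, leaving $f(x',y)\le f(x,y')$ for all $(x',y')$. Taking $x'=x$ and then $y'=y$ yields the saddle inequalities.

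\emph{Part (ii).} Write $R_\lambda(x,y)=(u,v)$, $G$ for the bifunction of $\lambda f$, and $p=(x,y)$. The saddle property gives, for every $(s,t)$,
\begin{align*}
0\le \lambda\{f(u,t)-f(s,v)\}+\tfrac12 d_X(s,x)^2-\tfrac12 d_X(u,x)^2+\tfrac12 d_Y(t,y)^2-\tfrac12 d_Y(v,y)^2 .
\end{align*}
Take $(s,t)=(1-\alpha)(u,v)\oplus\alpha(x',y')$. By concavity in $x$ and convexity in $y$, $f(u,t)-f(s,v)\le \alpha\{f(u,y')-f(x',v)\}$, since the $(u,v)$ term contributes zero. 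The CN-inequality \eqref{eq:CN-inequality} in $X\times Y$ with the $\ell^2$ metric (a $\CAT(0)$ space, as shown in the preliminaries) gives
\begin{align*}
d((s,t),p)^2\le(1-\alpha)d((u,v),p)^2+\alpha d((x',y'),p)^2-\alpha(1-\alpha)d((u,v),(x',y'))^2 .
\end{align*}
Substitute, divide by $\alpha$, and let $\alpha\downarrow0$ to obtain
\begin{align*}
0\le 2\lambda\{f(u,y')-f(x',v)\}-d((u,v),p)^2+d((x',y'),p)^2-d((u,v),(x',y'))^2 ,
\end{align*}
which is exactly (ii).

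\emph{Part (iii).} Apply (ii) to $R_\lambda(x,y)$ with the test point $R_\mu(x',y')$, and to $R_\mu(x',y')$ with the test point $R_\lambda(x,y)$. Multiply the first inequality by $\mu$ and the second by $\lambda$, then add. The $f$ terms become $2\lambda\mu\{\cdot\}$ with opposite signs and cancel, which leaves (iii) after collecting the squared-distance terms.

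\emph{Part (iv).} Take $\lambda=\mu$ in (iii) and divide by $\lambda$; this is the firmly metrically nonspreading inequality. Nonexpansiveness then follows as was noted for $P_C$ in the preliminaries, by combining that inequality with Theorem~\ref{thm:CS-inequality}. Concretely, apply the Cauchy--Schwarz inequality to the four points $Tp,Tq,p,q$ with $T=R_\lambda$:
\begin{align*}
2d(Tp,Tq)^2\le d(Tp,q)^2+d(Tq,p)^2-d(Tp,p)^2-d(Tq,q)^2\le 2d(Tp,Tq)d(p,q) .
\end{align*}
Hence $d(Tp,Tq)\le d(p,q)$.

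\emph{Part (v).} Apply (iii) with $(x',y')=R_\lambda(x,y)=:w$ and $p=(x,y)$, writing $z=R_\mu w$. The terms $\lambda d(R_\lambda p,w)^2$ vanish, leaving
\begin{align*}
(\lambda+\mu)d(w,z)^2+\mu d(w,p)^2+\lambda d(z,w)^2\le \mu d(z,p)^2 .
\end{align*}
Bound $d(z,p)\le d(z,w)+d(w,p)$ and set $a=d(z,w)$, $b=d(w,p)$. The inequality becomes
\begin{align*}
(2\lambda+\mu)a^2+\mu b^2\le \mu(a+b)^2 ,
\end{align*}
that is, $2\lambda a^2\le 2\mu ab$. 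This gives $a/\mu\le b/\lambda$ when $a>0$, and the case $a=0$ is trivial.

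The main obstacle is the limiting argument in (ii). Every other part reduces to algebra once the three-point inequality is established.
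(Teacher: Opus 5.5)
Your proposal is correct and follows the paper's proof essentially step for step. It uses the geodesic test point $(1-\alpha)R_\lambda(x,y)\oplus\alpha(x',y')$ with the CN-inequality for (ii), the $\mu$/$\lambda$-weighted sum of two instances of (ii) for (iii), $\lambda=\mu$ together with Theorem~\ref{thm:CS-inequality} for (iv), and (iii) with $(x',y')=R_\lambda(x,y)$ for (v). The only cosmetic differences are these: you get $\Fix(R_\lambda)\subset\Saddle(f)$ as a corollary of (ii), where the paper repeats the same geodesic computation directly, and in (v) you use the triangle inequality where the paper invokes Theorem~\ref{thm:CS-inequality}, which in that configuration reduces to the same thing.
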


\begin{proof}
 For each $\lambda>0$ and $(x,y)\in X\times Y$, 
 we define $g_{\lambda, x,y}\colon X\times Y\to \R$ by 
 \begin{align}
  g_{\lambda, x,y}(u,v)
  = f(x,y) -\frac{1}{2\lambda}d_X(u,x)^2 + \frac{1}{2\lambda}d_Y(v,y)^2
 \end{align}
 for all $(u,v) \in X\times Y$. 
 We define a real functions $F$ and $G_{\lambda,x,y}$ 
 by 
 \begin{align}
  \begin{split}
  F\bigl((x,y),(x',y')\bigr) &= f(x,y')-f(x',y); \\
  G_{\lambda,x,y}\bigl((u,v),(u',v')\bigr) & = g_{\lambda,x,y}(u,v') - 
 g_{\lambda,x,y}(u',v)   
  \end{split}
 \end{align}
 for all $(u,v)$ and $(u',v')$ in $X\times Y$. 
 We denote by $d$ the $\ell^2$-metric on $X\times Y$. Then we have 
 \begin{align}
 \begin{split}
  &G_{\lambda,x,y}\bigl((u,v),(u',v')\bigr) \\
  &\quad = F\bigl((u,v),(u',v')\bigr) 
  -\frac{1}{2\lambda}d\bigl((u,v),(x,y)\bigr)^2 
  + \frac{1}{2\lambda}d\bigl((u',v'), (x,y)\bigr)^2  
 \end{split}
 \end{align}
 for all $(u,v)$ and $(u',v')$ in $X\times Y$. 

 We first prove the part~(i). 
 Let $\lambda>0$. It is clear from the definition of $R_{\lambda}$ that 
 $R_{\lambda}$ is a single-valued mapping of $X\times Y$ 
 into itself. Let $(\bar{x},\bar{y})\in \Fix(R_{\lambda})$ be given. 
 Then we have 
 \begin{align}
  0&\leq G_{\lambda, \bar{x},\bar{y}}\bigl((\bar{x},\bar{y}),(u,v)\bigr)
 \end{align}
 for all $(u,v)\in X\times Y$. Let $(u,v)\in X\times Y$ and 
 set 
 \begin{align}
  (z_t,w_t) = (1-t)(\bar{x},\bar{y}) \oplus (1-t)(u,v)
 \end{align}
 for all $t\in [0,1]$. Then we have 
 \begin{align}
\begin{split}
  0
&\leq G_{\lambda, \bar{x},\bar{y}}\bigl((\bar{x},\bar{y}),(z_t,w_t)\bigr) \\
&\leq (1-t)F\bigl((\bar{x},\bar{y}),(\bar{x},\bar{y})\bigr) 
+tF\bigl((\bar{x},\bar{y}),(u,v)\bigr) 
 +\frac{t^2}{2\lambda}d\bigl((\bar{x},\bar{y}), (u,v)\bigr)^2 \\
&\leq tF\bigl((\bar{x},\bar{y}),(u,v)\bigr) 
 +\frac{t^2}{2\lambda}d\bigl((\bar{x},\bar{y}), (u,v)\bigr)^2 
\end{split}
 \end{align} 
 and hence 
 \begin{align}
  0\leq F\bigl((\bar{x},\bar{y}),(u,v)\bigr) 
 +\frac{t}{2\lambda}d\bigl((\bar{x},\bar{y}), (u,v)\bigr)^2. 
 \end{align}
 Letting $t\downarrow 0$, we have 
 $0\leq F\bigl((\bar{x},\bar{y}),(u,v)\bigr)$. Thus $(\bar{x},\bar{y})\in \Saddle(f)$. 
 Conversely, if $(\bar{x},\bar{y})\in \Saddle(f)$, then we have 
 \begin{align}
  0
 &\leq F\bigl((\bar{x},\bar{y}),(u,v)\bigr) 
 \leq G_{\lambda, \bar{x},\bar{y}}\bigl((\bar{x},\bar{y}),(u,v)\bigr)
 \end{align}
 for all $(u,v)\in X\times Y$. Hence $R_{\lambda}(\bar{x},\bar{y})=(\bar{x},\bar{y})$. 
 Consequently, we obtain $(\bar{x},\bar{y})\in \Fix(R_{\lambda})$. 

 We next prove the part~(ii). Let $\lambda>0$ be given 
 and let $(x,y)$ and $(x',y')$ be points in $X\times Y$. 
 Set 
 \begin{align}
  (z_t,w_t) = (1-t)R_{\lambda}(x,y) \oplus t(x',y')
 \end{align}
 for all $t\in [0,1]$. Then we have 
 \begin{align}
 \begin{split}
  0
 &\leq G_{\lambda,x,y}\bigl(R_{\lambda}(x,y),(z_t,w_t)\bigr) \\
 &\leq (1-t)F\bigl(R_{\lambda}(x,y),R_{\lambda}(x,y)\bigr) 
 + tF\bigl(R_{\lambda}(x,y),(x',y')\bigr) \\
 &\quad -\frac{1}{2\lambda}d\bigl(R_{\lambda}(x,y),(x,y)\bigr)^2 
 +\frac{1}{2\lambda}d\bigl((z_t,w_t),(x,y)\bigr)^2 \\
 &\leq tF\bigl(R_{\lambda}(x,y),(x',y')\bigr) 
 -\frac{t}{2\lambda}d\bigl(R_{\lambda}(x,y),(x,y)\bigr)^2 
 +\frac{t}{2\lambda}d\bigl((x',y'),(x,y)\bigr)^2 \\
 &\quad 
-\frac{t(1-t)}{2\lambda}d\bigl(R_{\lambda}(x,y),(x',y')\bigr)^2  
 \end{split}
 \end{align}
 and hence 
 \begin{align}
 \begin{split}
 &(1-t)d\bigl(R_{\lambda}(x,y),(x',y')\bigr)^2 
 + d\bigl(R_{\lambda}(x,y),(x,y)\bigr)^2 
 + 2\lambda F\bigl((x',y'),R_{\lambda}(x,y)\bigr) \\
 &\quad \leq d\bigl((x',y'),(x,y)\bigr)^2.   
 \end{split}
 \end{align}
 Letting $t\downarrow 0$, we obtain the conclusion. 

 We next prove the part~(iii). Let $\lambda, \mu>0$ be given. 
 It follows from the part~(ii) that 
 \begin{align}
 \begin{split}
 &\mu d\bigl(R_{\lambda}(x,y),R_{\mu}(x',y')\bigr)^2 
 + \mu d\bigl(R_{\lambda}(x,y),(x,y)\bigr)^2 \\
 &\quad + 2\lambda \mu F\bigl(R_{\mu}(x',y'),R_{\lambda}(x,y)\bigr) 
 \leq \mu d\bigl(R_{\mu}(x',y'),(x,y)\bigr)^2  
 \end{split}
 \end{align}
 and 
 \begin{align}
 \begin{split}
 &\lambda d\bigl(R_{\mu}(x',y'),R_{\lambda}(x,y)\bigr)^2 
 + \lambda d\bigl(R_{\mu}(x',y'),(x',y')\bigr)^2 \\
 &\quad + 2\lambda \mu F\bigl(R_{\lambda}(x,y),R_{\mu}(x',y')\bigr) 
 \leq \lambda d\bigl(R_{\lambda}(x,y),(x',y')\bigr)^2.   
 \end{split}
 \end{align}
 Summing up these inequalities yields the conclusion. 

 We next prove the part~(iv). Let $\lambda>0$ be given. 
 Then we have from the part~(iii) that 
 \begin{align}
 \begin{split}
  &2d\bigl(R_{\lambda}(x,y), R_{\lambda}(x',y')\bigr)^2
 +d\bigl(R_{\lambda}(x,y), (x,y)\bigr)^2 
 +d\bigl(R_{\lambda}(x',y'), (x',y')\bigr)^2 \\
  &\quad \leq d\bigl(R_{\lambda}(x,y), (x',y')\bigr)^2
+ d\bigl(R_{\lambda}(x',y'), (x,y)\bigr)^2  
 \end{split}
 \end{align}
  holds for all $(x,y)$ and $(x',y')$ in $X\times Y$. 
 This means that $R_{\lambda}$ is firmly metrically nonspareading. 
 If $(x,y)$ and $(x',y')$ are points in $X\times Y$, 
 then Theorem~\ref{thm:CS-inequality} implies that 
  \begin{align}
 \begin{split}
  &d\bigl(R_{\lambda}(x,y), R_{\lambda}(x',y')\bigr)^2 \\
  &\quad \leq \frac{1}{2}\bigl\{d\bigl(R_{\lambda}(x,y), (x',y')\bigr)^2
+ d\bigl(R_{\lambda}(x',y'), (x,y)\bigr)^2
 -d\bigl(R_{\lambda}(x,y), (x,y)\bigr)^2 \\
 &\qquad -d\bigl(R_{\lambda}(x',y'), (x',y')\bigr)^2\bigr\} \\
 &\quad \leq d\bigl(R_{\lambda}(x,y), R_{\lambda}(x',y')\bigr)
d\bigl((x,y), (x',y')\bigr)  
 \end{split}
 \end{align}
 This implies that $R_{\lambda}$ is nonexpansive. 

 We finally prove the part~(v). 
 Let $\lambda, \mu>0$ and $(x,y)\in X\times Y$ be given. 
 It follows from the part~(iii) that 
 \begin{align}
 \begin{split}
  &(\lambda+\mu)d\bigl(R_{\lambda}(x,y), R_{\mu}R_{\lambda}(x,y)\bigr)^2
 +\mu d\bigl(R_{\lambda}(x,y), (x,y)\bigr)^2 \\
  &\quad +\lambda d\bigl(R_{\mu}R_{\lambda}(x,y), R_{\lambda}(x,y)\bigr)^2 \\
  &\leq \lambda d\bigl(R_{\lambda}(x,y), R_{\lambda}(x,y)\bigr)^2
+\mu d\bigl(R_{\mu}R_{\lambda}(x,y), (x,y)\bigr)^2.   
 \end{split}
 \end{align} 
 Using Theorem~\ref{thm:CS-inequality}, we have 
 \begin{align}
 \begin{split}
  &\lambda d\bigl(R_{\mu}R_{\lambda}(x,y), R_{\lambda}(x,y)\bigr)^2 \\
  &\leq 
  \frac{\mu}{2}\bigl\{
  d\bigl(R_{\mu}R_{\lambda}(x,y), (x,y)\bigr)^2 
 - d\bigl(R_{\mu}R_{\lambda}(x,y),R_{\lambda}(x,y)\bigr)^2 \\
 &\quad - d\bigl(R_{\lambda}(x,y), (x,y)\bigr)^2 
  \bigr\} \\
  &\leq \mu d\bigl(R_{\mu}R_{\lambda}(x,y), R_{\lambda}(x,y)\bigr) 
d\bigl(R_{\lambda}(x,y), (x,y)\bigr)  
 \end{split}
 \end{align} 
 and hence we obtain the conclusion. 
\end{proof}

Using Theorems~\ref{thm:metrically-nonspareding} 
and~\ref{thm:resolvent-saddle}, we obtain the following: 

\begin{corollary}
 Let $(X,d_X)$, $(Y,d_Y)$, $f$ be the same as in 
 Theorem~\ref{thm:resolvent-minimax} and 
 $R_f$ the resolvent of $f$. 
 Then $\Saddle (f)$ is nonempty if and only if 
 $\{R_f^n(x,y)\}$ is bounded for some $(x,y)\in X\times Y$. 
 In this case, $\{R_f^n(x,y)\}$ is $\Delta$-convergent to 
 an element of $\Saddle (f)$ 
 for each $(x,y)\in X\times Y$.  
\end{corollary}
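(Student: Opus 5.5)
The plan is to apply Theorem~\ref{thm:metrically-nonspareding} to the mapping $T=R_f$ on the space $Z=X\times Y$ with the $\ell^2$-metric $d$. First I will check that $(X\times Y,d)$ is an Hadamard space. The end of Section~\ref{sec:preliminaries} already shows that it is a $\CAT(0)$ space, via the product geodesics~\eqref{eq:geodesic-product} and Theorem~\ref{thm:CS-inequality}. Completeness is immediate, because a sequence is Cauchy for $d$ exactly when both of its coordinate sequences are Cauchy in the complete spaces $X$ and $Y$, and coordinatewise convergence is convergence in $d$.

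Next, $R_f$ is the resolvent $R_\lambda$ with $\lambda=1$ in Theorem~\ref{thm:resolvent-saddle}. By part~(iv) of that theorem, $R_f$ is firmly metrically nonspreading, and hence metrically nonspreading. By part~(i), $\Fix(R_f)=\Saddle(f)$.

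Theorem~\ref{thm:metrically-nonspareding} then gives both conclusions. First, $\Fix(R_f)$ is nonempty if and only if $\{R_f^n(x,y)\}$ is bounded for some $(x,y)\in X\times Y$. Second, in that case $\{R_f^n(x,y)\}$ is $\Delta$-convergent to an element of $\Fix(R_f)$ for every $(x,y)$. Replacing $\Fix(R_f)$ by $\Saddle(f)$ in both statements yields the corollary.

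There is no real obstacle here. The only point requiring care is that the $\Delta$-convergence is taken in the product Hadamard space $(X\times Y,d)$, so I must confirm that this space is complete and $\CAT(0)$ before invoking Theorem~\ref{thm:metrically-nonspareding}; both facts were established above.
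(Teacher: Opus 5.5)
Your proposal is correct and matches the paper's argument: combine parts~(i) and~(iv) of Theorem~\ref{thm:resolvent-saddle} with $\lambda=1$, which give $\Fix(R_f)=\Saddle(f)$ and show that $R_f$ is firmly metrically nonspreading (hence metrically nonspreading), and then apply Theorem~\ref{thm:metrically-nonspareding} in the Hadamard space $(X\times Y,d)$. Your explicit check that the $\ell^2$-product is complete is a useful detail, since the paper only asserts it in passing in the proof of Theorem~\ref{thm:minimax-coercive}.
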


\section{The proximal point algorithm for saddle functions}\label{sec:ppa}

In this section, we study the asymptotic behavior of 
sequences generated by the proximal point algorithm 
for saddle functions in Hadamard spaces. 

Applying the results obtained in this papers, we obtain 
the following result generalizing Theorem~\ref{thm:Rockafellar-ppa-saddle} 
to the Hadamard space setting: 

\begin{theorem}\label{thm:PPA-Saddle}
 Let $(X,d_X)$, $(Y,d_Y)$, $f$ be the same as in 
 Theorem~\ref{thm:resolvent-minimax}, 
 $\{\lambda_n\}$ a sequence of positive real numbers 
 such that $\sum_{n=1}^{\infty}\lambda_n=\infty$, 
 and $\{(x_n,y_n)\}$ the sequence in $X\times Y$ 
 defined by $(x_1,y_1)\in X\times Y$ and 
 $(x_{n+1},y_{n+1})$ is the unique saddle point of 
 the function 
 \begin{align}
  (z,w) \mapsto 
  f(z,w) -\frac{1}{2\lambda _n}d_X(z,x_n)^2
 + \frac{1}{2 \lambda _n}d_Y(w,y_n)^2
 \end{align} 
 on $X\times Y$ for all $n\in \N$. 
 Then the following hold: 
 \begin{enumerate}
  \item[(i)] $\Saddle (f)$ is nonempty if and only if 
 $\{(x_n,y_n)\}$ is bounded; 
  \item[(ii)] if $\Saddle (f)$ is nonempty 
 and $\sum_{n=1}^{\infty}\lambda_n^2=\infty$, 
 $\{(x_n,y_n)\}$ is $\Delta$-convergent to 
 an element of $\Saddle (f)$.  
 \end{enumerate}
\end{theorem}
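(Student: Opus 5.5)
We write $z_n=(x_n,y_n)$ and $d$ for the $\ell^2$-metric on $X\times Y$, and set
\begin{align*}
F\bigl((x,y),(x',y')\bigr)=f(x,y')-f(x',y).
\end{align*}
By Definition~\ref{def:resolvent-of-saddle-function} we have $z_{n+1}=R_{\lambda_n}z_n$, where $R_\lambda$ is the resolvent of $\lambda f$. So Theorem~\ref{thm:resolvent-saddle} applies at every step. The key tool is part~(ii) of that theorem with an arbitrary $u\in X\times Y$:
\begin{align*}
d(z_{n+1},u)^2+d(z_{n+1},z_n)^2+2\lambda_n F(u,z_{n+1})\leq d(z_n,u)^2 .
\end{align*}
Two convexity facts will be used. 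First, $F(u,\cdot\,)$ is a real-valued lower semicontinuous convex function on the Hadamard space $X\times Y$, hence $\Delta$-lower semicontinuous. Second, $F(u,u)=0$.

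\emph{Part (i), only if.} If $p\in\Saddle(f)$, then $F(p,z_{n+1})\geq0$, so the displayed inequality gives $d(z_{n+1},p)\leq d(z_n,p)$. Hence $\{z_n\}$ is bounded. The same inequality also yields Fejér monotonicity with respect to $\Saddle(f)$ and $\sum_n d(z_{n+1},z_n)^2\leq d(z_1,p)^2<\infty$.

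\emph{Part (i), if.} Assume $\{z_n\}$ is bounded. I would apply Theorem~\ref{thm:Kimura-K-bdd} with $\beta_l=\lambda_l$ to the sequence $\{z_{l+1}\}$. This produces the continuous convex function
\begin{align*}
g(u)=\limsup_{n\to\infty}\frac{1}{\sum_{k=1}^n\lambda_k}\sum_{l=1}^n\lambda_l d(u,z_{l+1})^2
\end{align*}
and its unique minimizer $p$; the goal is to show $p\in\Saddle(f)$. Summing the key inequality from $l=1$ to $n$ and dividing by $\sum_{k\leq n}\lambda_k\to\infty$ gives
\begin{align*}
\limsup_{n\to\infty}\frac{\sum_{l=1}^n\lambda_l F(u,z_{l+1})}{\sum_{k=1}^n\lambda_k}\leq0
\end{align*}
for every $u$. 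What remains is a Jensen-type transfer from this averaged statement to $F(u,p)\leq0$. I expect this to be the main obstacle, because Hadamard spaces have no linear averaging and $p$ only plays the role of the weighted average of the $z_{l+1}$.

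To handle it, I would test $g(p)\leq g(w_t)$ with $w_t=(1-t)p\oplus tu$. Apply the key inequality with $w_t$ in place of $u$, and use the concavity of $F(\cdot\,,z)$ in its first variable, which holds because $f(\cdot\,,y)$ is concave and $-f(x,\cdot\,)$ is concave. This gives $F(w_t,z)\geq(1-t)F(p,z)+tF(u,z)$. Combining it with the inequality~\eqref{eq:CN-inequality} for $d(w_t,z_{l+1})^2$ and passing to weighted averages should give
\begin{align*}
0\leq g(w_t)-g(p)\leq -t\,g(p)+t\,\ell(u)-t(1-t)d(p,u)^2+(\text{averaged }F\text{ terms}) .
\end{align*}
Here $\ell(u)$ denotes the averaged limit of $d(z_{l+1},u)^2$. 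Dividing by $t$ and letting $t\downarrow0$ should force $F(u,p)\leq 0$, that is, $F(p,u)\geq0$ for all $u$. If this direct comparison does not close, the fallback is the alternative route used by Kimura--Kohsaka: show $\Fix$-type stability of $p$ via $g(R_\lambda p)\leq g(p)$ and uniqueness of the minimizer, then conclude with part~(i) of Theorem~\ref{thm:resolvent-saddle}.

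\emph{Part (ii).} Assume $\Saddle(f)\neq\emptyset$ and $\sum\lambda_n^2=\infty$, and set $c_n=d(z_{n+1},z_n)/\lambda_n$. Part~(v) of Theorem~\ref{thm:resolvent-saddle}, applied with $\lambda=\lambda_n$, $\mu=\lambda_{n+1}$ and the point $z_n$, shows that $\{c_n\}$ is nonincreasing. Since $\sum\lambda_n^2c_n^2=\sum d(z_{n+1},z_n)^2<\infty$ while $\sum\lambda_n^2=\infty$, monotonicity forces $c_n\to0$.

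Next take $q\in\omega_\Delta(\{z_n\})$, which is nonempty by Theorem~\ref{thm:Kirk--Panyanak}. Fix $u$ and let $M$ bound $d(z_n,u)$. The key inequality gives
\begin{align*}
2\lambda_nF(u,z_{n+1})\leq d(z_n,u)^2-d(z_{n+1},u)^2\leq 2M\,d(z_n,z_{n+1}),
\end{align*}
so $F(u,z_{n+1})\leq Mc_n\to0$. Along a subsequence with $z_{n_i+1}$ $\Delta$-convergent to $q$, the $\Delta$-lower semicontinuity of $F(u,\cdot\,)$ gives $F(u,q)\leq0$ for all $u$. Thus $q\in\Saddle(f)$. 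By Fejér monotonicity, $\{d(q,z_n)\}$ converges for each such $q$, and Lemma~\ref{lem:Kimura-K-omega} yields $\Delta$-convergence of $\{z_n\}$ to a point, which must lie in $\omega_\Delta(\{z_n\})\subset\Saddle(f)$.
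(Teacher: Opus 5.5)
Your ``only if'' half of (i) and your part (ii) are correct and essentially the paper's argument: $c_n$ is nonincreasing by Theorem~\ref{thm:resolvent-saddle}(v), hence $c_n\to0$; then $F(u,z_{n+1})\le Mc_n$, $F(u,\cdot\,)$ is $\Delta$-lower semicontinuous, and Lemma~\ref{lem:Kimura-K-omega} finishes.

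The gap is in the ``if'' half of (i), where your primary route does not close. Testing $g(p)\le g(w_t)$ with \eqref{eq:CN-inequality} only yields $g(u)\ge g(p)+d(p,u)^2$, in which $F$ never appears. The key inequality controls only the increments $d(z_l,w)^2-d(z_{l+1},w)^2$. Inserting it with $w=w_t$ causes three problems:
\begin{enumerate}
\item the $F$-terms come with weights $\lambda_l^2$ instead of $\lambda_l$;
\item you get averages of $d(z_l,\cdot\,)^2$ instead of the $d(z_{l+1},\cdot\,)^2$ that define $g$;
\item the concavity step introduces $F(p,z_{l+1})$, whose sign is exactly what is unknown.
\end{enumerate}
No term $F(u,p)$ is ever produced. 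Passing from $\limsup_n\sum_{l\le n}\lambda_lF(u,z_{l+1})/\sum_{k\le n}\lambda_k\le0$ to $F(u,p)\le0$ is a Jensen inequality for $p\in\Argmin g$, and convexity along the single geodesic $[p,u]$ cannot deliver it. (Such a Jensen inequality does hold, but it needs a proximal test point. Let $J_\mu p$ minimize $F(u,\cdot\,)+\frac{1}{2\mu}d(\cdot\,,p)^2$. Comparing $g(p)$ with $g(J_\mu p)$ gives $F(u,J_\mu p)+\frac{1}{2\mu}d(J_\mu p,p)^2\le0$, and letting $\mu\downarrow0$ yields $F(u,p)\le0$.)

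Your fallback is precisely the paper's proof, but you omit the step that makes it work, namely why $g(R_\mu p)\le g(p)$. It comes from Theorem~\ref{thm:resolvent-saddle}(iii) applied to $(\lambda_l,z_l)$ and $(\mu,p)$. Dropping the nonnegative terms $\mu\, d(z_{l+1},z_l)^2$ and $\lambda_l\, d(R_\mu p,p)^2$ gives
\begin{align*}
\lambda_l\,d(z_{l+1},R_\mu p)^2\le\lambda_l\,d(z_{l+1},p)^2+\mu\bigl\{d(z_l,R_\mu p)^2-d(z_{l+1},R_\mu p)^2\bigr\}.
\end{align*}
The $\mu$-term telescopes. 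Summing over $l\le n$, dividing by $\sum_{k\le n}\lambda_k\to\infty$ and taking $\limsup$ gives $g(R_\mu p)\le g(p)$. Uniqueness of the minimizer in Theorem~\ref{thm:Kimura-K-bdd} forces $R_\mu p=p$, and Theorem~\ref{thm:resolvent-saddle}(i) gives $p\in\Saddle(f)$.

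This is the mechanism missing from your $w_t$ computation. Inequality (iii) is the sum of two instances of (ii):
\begin{enumerate}
\item $\mu$ times (ii) along the iteration, with test point $R_\mu p$;
\item $\lambda_l$ times (ii) at $p$ with parameter $\mu$, with test point $z_{l+1}$.
\end{enumerate}
The second instance produces the $\lambda_l$-weighted distances $d(z_{l+1},\cdot\,)^2$ that enter $g$. The $F$-terms cancel because $F$ is antisymmetric.
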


\begin{proof}
 Let $R_{\lambda_n}$ be the resolvent of $\lambda_n f$ 
 for all $n\in \N$ and $d$ the $\ell^2$-metric on $X\times Y$. 
 Then we have 
 \begin{align}
  (x_{n+1},y_{n+1})=R_{\lambda_n}(x_n,y_n)				  
 \end{align}
 for all $n\in \N$. 

 We first prove the part~(i). 
 Suppose that $\Saddle (f)$ is nonempty. 
 Let $(u,v)$ be an element of $\Saddle (f)$. 
 It follows from~(i) and~(iv) of Theorem~\ref{thm:resolvent-saddle} that 
 \begin{align}
  d\bigl((u,v), (x_{n+1},y_{n+1})\bigr) 
  =d\bigl((u,v), R_{\lambda_n}(x_{n},y_{n})\bigr) 
  \leq d\bigl((u,v), (x_{n},y_{n})\bigr) 
 \end{align}
 for all $n\in \N$ and hence $\{(x_n,y_n)\}$ is bounded. 
 Conversely, suppose that $\{(x_n,y_n)\}$ is bounded 
 and let $\mu>0$ be given. 
 Let $g\colon X\times Y\to \R$ 
 be a function defined by 
 \begin{align}
  g(x,y) = \limsup_{n\to \infty} 
  \frac{1}{\sum_{k=1}^n \lambda_k}
  \sum_{l=1}^n \lambda_l d\bigl((x_{l+1},y_{l+1}), (x,y)\bigr)^2
 \end{align}
 for all $(x,y)\in X\times Y$. 
 Theorem~\ref{thm:Kimura-K-bdd} ensures that $g$ has a unique 
 minimizer $(p,q)$. 
 Using~(iii) of Theorem~\ref{thm:resolvent-saddle}, we have 
 \begin{align}
 \begin{split}
  (\lambda_n+\mu)
  &d\bigl(
  (x_{n+1},y_{n+1}),R_{\mu}(p,q)
  \bigr)^2 \\
  &\quad \leq 
 \lambda_n
  d\bigl(
  (x_{n+1},y_{n+1}),(p,q)
  \bigr)^2
+ \mu 
  d\bigl(
  (x_{n},y_{n}),R_{\mu}(p,q)
  \bigr)^2  
 \end{split}
 \end{align}
 and hence 
 \begin{align}
 \begin{split}
  &\frac{1}{\sum_{k=1}^n\lambda_k}
  \sum_{l=1}^n\lambda_l
  d\bigl(
  (x_{l+1},y_{l+1}),R_{\mu}(p,q)
  \bigr)^2 \\
  &\leq 
  \frac{1}{\sum_{k=1}^n\lambda_k}
  \sum_{l=1}^n\lambda_l
  d\bigl(
  (x_{l+1},y_{l+1}),(p,q)
  \bigr)^2
+ \frac{\mu}{\sum_{k=1}^n\lambda_k}
d\bigl(
  (x_{1},y_{1}),R_{1}(p,q)
  \bigr)^2  
 \end{split}
 \end{align}
 for all $n\in \N$. Since $\sum_{k=1}^{\infty}\lambda_k=\infty$, 
 we have 
 \begin{align}
  g\bigl(R_{\mu}(p,q)\bigr)
  \leq g(p,q).
 \end{align}
 Consequently, we obtain $R_{\mu}(p,q)=(p,q)$. 
 Thus $(p,q)$ is an element of $\Saddle (f)$. 

 We finally prove the part~(ii). 
 Suppose that 
 $\Saddle (f)$ is nonempty 
 and $\sum_{n=1}^{\infty}\beta_n^2=\infty$. 
 Let $(u,v)$ 
 be an element of $\Saddle (f)$. 
 It follows from~(ii) of Theorem~\ref{thm:resolvent-saddle} that 
 \begin{align}
  \begin{split}
  &d\bigl((u,v),(x_{n},y_{n})\bigr)^2 \\
  &\geq d\bigl((u,v),(x_{n+1},y_{n+1})\bigr)^2 
  + d\bigl((x_{n+1},y_{n+1}),(x_{n},y_{n})\bigr)^2 \\
  &\quad +2\lambda_n 
  \bigl(
  f(u,y_{n+1}) -f(x_{n+1},v)
  \bigr) \\
  &\geq d\bigl((u,v),(x_{n+1},y_{n+1})\bigr)^2 
  + d\bigl((x_{n+1},y_{n+1}),(x_{n},y_{n})\bigr)^2 
  \end{split}
 \end{align}
 and hence $\{d\bigl((u,v),(x_{n},y_{n})\bigr)\}$ 
 is convergent and 
 \begin{align}
  \begin{split}
  &\sum_{n=1}^{\infty} d\bigl((x_{n+1},y_{n+1}),(x_{n},y_{n})\bigr)^2 \\
  &\quad \leq d\bigl((u,v),(x_{1},y_{1})\bigr)^2 
 -\lim_{n\to \infty} d\bigl((u,v),(x_{n},y_{n})\bigr)^2. 
  \end{split}
 \end{align}
 This implies that 
 \begin{align}\label{eq:thm:PPA-Saddle-a0}
  \lim_{n\to \infty}d\bigl((x_{n+1},y_{n+1}),(x_{n},y_{n})\bigr)= 0 
 \end{align}
 and 
 \begin{align}\label{eq:thm:PPA-Saddle-a1}
 \sum_{n=1}^{\infty} 
  \lambda_n^2 \left\{
\frac{1}{\lambda_n} d\bigl((x_{n+1},y_{n+1}),(x_{n},y_{n})\bigr) 
  \right\}^2
  <\infty
 \end{align}
 Since $\sum_{n=1}^{\infty}\lambda_n^2=\infty$, we know that 
 \begin{align}
  \liminf_{n\to \infty} 
\frac{1}{\lambda_n} d\bigl((x_{n+1},y_{n+1}),(x_{n},y_{n})\bigr) 
 =0. 
 \end{align} 
 On the other hand, using~(v) of Theorem~\ref{thm:resolvent-saddle}, 
 we also have 
 \begin{align}
 \begin{split}
  \frac{1}{\lambda_{n+1}} 
  d\bigl((x_{n+2},y_{n+2}),(x_{n+1},y_{n+1})\bigr)
 &=\frac{1}{\lambda_{n+1}} 
  d\bigl(R_{\lambda_{n+1}}R_{\lambda_n}(x_{n},y_{n}),
  R_{\lambda_n}(x_{n},y_{n})\bigr) \\
 &\leq 
    \frac{1}{\lambda_{n}} 
  d\bigl(R_{\lambda_n}(x_{n},y_{n}),(x_{n},y_{n})\bigr) \\
 &= \frac{1}{\lambda_{n}} 
  d\bigl((x_{n+1},y_{n+1}),(x_{n},y_{n})\bigr)  
 \end{split}
 \end{align}
 for all $n\in \N$ and hence the sequence 
 \begin{align}
  \left\{\frac{1}{\lambda_{n}}d\bigl((x_{n+1},y_{n+1}),(x_{n},y_{n})\bigr)\right\}
 \end{align}
 is convergent. Thus we have from~\eqref{eq:thm:PPA-Saddle-a1} that 
 \begin{align}\label{eq:thm:PPA-Saddle-a}
\lim_{n\to \infty} 
\frac{1}{\lambda_n} d\bigl((x_{n+1},y_{n+1}),(x_{n},y_{n})\bigr) 
 =0. 
 \end{align} 
 Let $(p,q)$ be an element of $\omega_{\Delta}\bigl(\{(x_n,y_n)\}\bigr)$. 
 Then there exists a subsequence $\{(x_{n_i},y_{n_i})\}$ of $\{(x_n,y_n)\}$ 
 which is $\Delta$-convergent to $(p,q)$. 
 It then follows from~\eqref{eq:thm:PPA-Saddle-a0} that 
 $\{(x_{n_i+1},y_{n_i+1})\}$ is also $\Delta$-convergent to $(p,q)$. 
 Let $(x,y)\in X\times Y$ be given. 
 Using~(ii) of Theorem~\ref{thm:resolvent-saddle}, we have 
 \begin{align}
 \begin{split}
  0
  &\leq f(x_{n+1},y)-f(x,y_{n+1}) +\frac{1}{2\lambda_n}\bigl\{d\bigl((x_n,y_n),(u,v)\bigr)^2 \\
  &\quad -d\bigl((x_{n+1},y_{n+1}),(x,y)\bigr)^2 
 -d\bigl((x_{n+1},y_{n+1}),(x_n,y_n)\bigr)^2\bigr\} \\
  &\leq f(x_{n+1},x)-f(y,y_{n+1}) 
 +\frac{1}{\lambda_n}d\bigl((x_{n+1},y_{n+1}),(x_n,y_n)\bigr)
d\bigl((x,y),(x_{n+1},y_{n+1})\bigr)  
 \end{split}
 \end{align}
 for all $n\in \N$. 
 Since $f(\,\cdot,v)-f(u,\cdot\,)$ is $\Delta$-upper semicontinuous on $X\times Y$, 
 we have from~\eqref{eq:thm:PPA-Saddle-a} that 
 \begin{align}
 \begin{split}
  0
  &\leq \limsup_{i\to \infty} 
  \Bigl[
 f(x_{n_i+1},y)-f(x,y_{n_i+1})  \\
  &\quad +\frac{1}{\lambda_{n_i}}d\bigl((x_{n_i+1},y_{n_i+1}),(x_{n_i},y_{n_i})\bigr)
d\bigl((u,v),(x_{n_i+1},y_{n_i+1})\bigr)
  \Bigr] \\
  &\leq f(p,y)-f(x,q).   
 \end{split}
 \end{align}
 This means that $(p,q)$ is an element of $\Saddle(f)$ 
 and hence $\{d\bigl((p,q),(x_{n},y_{n})\bigr)\}$ 
 is convergent. 
 Lemma~\ref{lem:Kimura-K-omega} implies that 
 $\{(x_n,y_n)\}$ is $\Delta$-convergent 
 a point $(x_{\infty},y_{\infty})$ 
 in $X\times Y$. 
 Consequently, we have 
 \begin{align}
  \{(x_{\infty},y_{\infty})\} =\omega_{\Delta}\bigl(\{(x_n,y_n)\}\bigr) 
  \subset \Saddle (f). 
 \end{align}
 Therefore the proof is completed. 
\end{proof}

\section*{Acknowledgements}
The author wishes to thank 
Ms.\ Kazuyo Hashimoto and Sakura Lemon McCartney in
Cafe Summer City, 
Hadano, 
Kanagawa, Japan 
for their unwavering encouragement and support.

\end{document}